\documentclass[12pt]{amsart}

\usepackage{amsmath}
\usepackage{latexsym}
\usepackage{amssymb}

\usepackage{amsfonts}

\usepackage{graphics,color,floatrow}

\renewcommand{\proof}{\par\noindent{\it Proof.\ \ }}
\def\qed{\ifmmode\square\else\nolinebreak\hfill
$\Box$\fi\par\vskip12pt}

\def\ov{\overline} 
\def\l{\langle} \def\r{\rangle}

 \def\ZZ{{\mathbb Z}}

\def\calD{{\mathcal D}}

\def\calD{{\mathcal D}}

\def\ZZ{{\mathbb Z}}

\def\K{{\bf K}}

\def\Aut{{\rm Aut}}  
  \def\Ker{{\rm Ker}}

\def\Z{{\bf Z}}

\def\Ga{{\it \Gamma}}

 \def\s{\sigma}

\def\B{{\rm B}} \def\W{{\rm W}}

\newtheorem{theorem}{Theorem}[section]
\newtheorem{lemma}[theorem]{Lemma}%
\newtheorem{corollary}[theorem]{Corollary}%
\newtheorem{proposition}[theorem]{Proposition}%
\newtheorem{hypothesis}[theorem]{Hypothesis}%

\newcommand \be{\begin{equation}}
\newcommand \ee{\end{equation}}
\newcommand \bes{\begin{eqnarray}}
\newcommand \ees{\end{eqnarray}}
\newcommand \bess{\begin{eqnarray*}}
\newcommand \eess{\end{eqnarray*}}

\begin{document}

\title[Complete regular dessins and bicyclic \(2\)-groups]{Complete regular dessins and non-metacyclic bicyclic \(2\)-groups with cyclic commutator subgroup}
\thanks{2020 Mathematics Subject Classification. Primary 20D15; Secondary 20B25, 05C25, 20E36.}
\thanks{This work was partially supported  by NSFC: 12061092;
Yunnan Applied Basic Research Projects: {202501AT070016}}

\author{Wenwen Fan}
\address{School of Mathematics\\
Yunnan Normal University\\
Kunming, Yunnan 650500\\
P. R. China}

\address{Yunnan Key Laboratory of Modern Analytical Mathematics and Applications\\  Kunming, Yunnan 650500\\ P. R. China}
\email{fww0871@163.com}

\author{Yuan Shi}
\address{School of Mathematics\\
Yunnan Normal University\\
Kunming, Yunnan 650500\\
P. R. China}
\email{2837017654@qq.com}

\author{Yun Tian Wu}
\address{School of Mathematics\\
Yunnan Normal University\\
Kunming, Yunnan 650500\\
P. R. China}
\email{wyt2027@163.com}

\date\today

\begin{abstract}

We study complete regular dessins with underlying graph \(\K_{2^e,2^f}\), where \(e\geq f\geq 2\),
whose automorphism groups are non-metacyclic.
Together with the cases of coprime orders and odd-prime-power orders, this is one of the basic classes needed for Jones's open
classification problem for regular dessins of complete bipartite graphs. Within the \(2\)-power case,
the present paper treats the subcase in which the commutator subgroup is cyclic; it is the first of three papers
devoted to the non-metacyclic case.

Our approach is group-theoretic. The automorphism group $G$ of a regular dessin of $\K_{2^e,2^f}$ admits an exact bicyclic factorization
$G=\langle b\rangle\langle w\rangle$ of type $(2^e,2^f)$ with $\langle b\rangle\cap\langle w\rangle=1$.
For a fixed group $G$, two exact generating pairs give isomorphic dessins exactly when they lie in the same orbit under $\Aut(G)$.
Thus the classification reduces to three group-theoretic tasks: determining the isomorphism classes of the possible groups $G$,
characterizing their exact bicyclic generating pairs, and computing $|\Aut(G)|$.
Berkovich and Janko divided the relevant non-metacyclic groups, through parameterized presentations, according to whether $G'$ is cyclic,
contains a cyclic subgroup of index $2$, or contains a cyclic subgroup of index $2^r$ with $r>1$.
However, their classification does not identify which parameter values in these
presentations define isomorphic groups; this problem was raised by Janko
in~\cite{J}. We carry out these steps for the cyclic-commutator case. This settles
Janko's isomorphism-classification problem in this case and yields a complete
classification and enumeration of the corresponding regular dessins.

\end{abstract}

\maketitle

\section{Introduction}

One of the central problems in topological graph theory is to classify all symmetric embeddings in orientable surfaces of a given class of graphs.
In this paper, we focus on such embeddings where the underlying graph is a complete bipartite graph.

Through Grothendieck's theory of {\it dessins d'enfants}, these embeddings establish a fundamental correspondence between Riemann surfaces, algebraic curves and Galois groups.
This theory provides a powerful combinatorial approach to the study of Bely\u{\i} surfaces and, in particular, to the absolute Galois group
\(\operatorname{Gal}(\overline{\mathbb Q}/\mathbb{Q})\)
 via its faithful action on some symmetrical bipartite maps, see~\cite{G-J,Grothendieck,Jones-Korea}.
 Consequently, the study of symmetrical orientable bipartite maps has become an active research topic in both algebraic and topological graph theory,
 with a particular focus on the highly symmetric case of complete bipartite graphs.

We use the standard terminology of dessins to represent orientable bipartite maps.
A {\it dessin} $\calD$ is a 2-cell embedding of a connected bipartite graph $\Gamma=(\B\cup \W, E)$ with black and white vertices, into an orientable closed surface $\mathbb{S}$,
where the graph $\Gamma$ is called the {\it underlying graph} and $\mathbb{S}$ the {\it supporting surface.}
If the underlying graph is the complete bipartite graph $\K_{m,n}$, then $\mathcal{D}$ is called a {\it complete dessin of type $(m,n)$}.
Denote by $\Aut\calD$  the automorphism group of $\calD$ which preserves the orientation of the supporting surface and the colors of the vertices.
Then $\Aut\calD$ always acts semi-regularly on the edges $E$ of $\calD$. If this action is transitive (and hence regular), $\mathcal{D}$ is called a {\it regular dessin.}
A regular dessin is termed {\it symmetric} if it admits an external symmetry that transposes the two vertex colors; such dessins correspond to orientably arc-regular bipartite maps.
If a symmetric dessin also admits an orientation-reversing automorphism, it is called {\it reflexible}, representing the highest level of map symmetry; see~\cite{Hu-3,siran-how}.

Regular dessins not only possess a high degree of symmetry, but also correspond to algebraic curves which are natural generalizations of Fermat curves.
They are therefore important in both algebraic geometry and topological graph theory.
The symmetric dessins with underlying graph $\K_{n,n}$ have been completely
classified in a series of papers~\cite{D,DJKNS-1,DJKNS-2,Jones-2010,Jones-2007,JNS,KK,NSZ}.
A natural and more challenging next step, proposed by Gareth Jones in 2007, is
the classification of all regular dessins with underlying graph \(\K_{m,n}\).
Following the classification strategy for symmetric dessins, we consider the
following three basic families of complete regular dessins:
\begin{enumerate}
    \item $\gcd(m,n) = 1$;
    \item $(m,n) = (p^e, p^f)$ with $p$ an odd prime;
    \item $(m,n) = (2^e, 2^f)$.
\end{enumerate}

The first two families have been classified and enumerated in~\cite{Fan-Li-Qu-2018,Fan-2023,Hu K},
and the metacyclic case of the third family has also been classified and enumerated
in~\cite{Fan-2022,Cai-Hu}. Thus the main remaining problem for the family
$(m,n)=(2^e,2^f)$ is the case in which the automorphism group is non-metacyclic.

To formulate this problem group-theoretically, let $\calD$ be a regular dessin of $\K_{2^e,2^f}$ with non-metacyclic
automorphism group and put $G=\Aut(\calD)$. By the dessin--group correspondence
recalled in Section~2, isomorphism classes of such dessins correspond bijectively
to equivalence classes of exact bicyclic triples $(G,b,w)$ satisfying
$G=\langle b\rangle\langle w\rangle$ and $\langle b\rangle\cap\langle w\rangle=1$,
with $|b|=2^e$ and $|w|=2^f$. For a fixed group $G$, two exact generating pairs
give isomorphic dessins precisely when they lie in the same orbit under
$\Aut(G)$. Consequently, the classification problem reduces to the following two
group-theoretic problems:
\begin{enumerate}
\item determine the isomorphism classes of groups in the relevant bicyclic
\(2\)-group families and identify those admitting exact factorizations of the
prescribed orders;
\item characterize the exact bicyclic generating pairs and describe the action
of the corresponding automorphism groups on these pairs.
\end{enumerate}

The first problem has a clear structural starting point. Berkovich and Janko
classified, up to structural families, the \(2\)-groups having exactly one
non-metacyclic maximal subgroup~\cite{B-J,J}; the non-metacyclic bicyclic
\(2\)-groups are precisely the two-generated groups in this class. Thus the
structural families of the relevant groups are known. Their classification does
not, however, determine which groups admit exact factorizations of the required
orders, nor which parameter values within these families define isomorphic
groups. The latter isomorphism problem was raised by Janko in~\cite{J}.

The second problem concerns automorphisms. The automorphism groups of these
\(2\)-groups admit no uniform explicit description across the relevant families,
and even determining their orders is difficult in general. Accordingly, the
central group-theoretic task is to determine, for the relevant groups, their
isomorphism classes, exact bicyclic generating pairs, and automorphism-group
orders. Together with the orbit correspondence above, these data yield the
classification and enumeration of the corresponding regular dessins.

By the Berkovich--Janko classification, a non-metacyclic bicyclic \(2\)-group
belongs to one of the following three structural cases according to its
commutator subgroup \(G'\):
\begin{enumerate}
\item[(i)] \(G'\) is cyclic;
\item[(ii)] \(G'\) contains a cyclic subgroup of index \(2\);
\item[(iii)] \(G'\) contains a cyclic subgroup of index \(2^r\) with \(r>1\).
\end{enumerate}
The present paper treats only case (i). The other two cases are treated
in~\cite{Fan-2,Fan-3}, respectively.

Parts of this three-paper project originated in the
master's theses~\cite{Shi-2026,Su-2025,Yang-2025}, two completed in 2025 and one
in May 2026. An earlier stage of the project was presented by the first author at
the 2025 Workshop on Algebraic Graph Theory and Combinatorics at the Southern
University of Science and Technology in November 2025. All results presented here
have nevertheless been independently rechecked, and complete proofs are given
below.

In a closely related recent preprint, K. Hu~\cite{Hu-2026} studies the
non-metacyclic case via marked exact triples. Our approach is complementary and goes beyond
enumeration: we classify the underlying abstract groups up to isomorphism
(Theorem~\ref{thm:group-classification}), determine the orders of their
automorphism groups (Theorem~\ref{thm:aut-Geta}), and obtain an orbit-theoretic
classification of the corresponding dessins. In particular, this settles
Janko's isomorphism-classification problem for the cyclic-commutator case.

To state our main results conveniently, we introduce a simple notation for the second family of groups in Lemma~\ref{lem:cyclic-commutator}.
For $m,n>1$, $\eta,\delta\in\{0,1\}$ and
$t\in\mathbb Z$, put $z=v^{2^{n-1}}$ and let $G(\eta,t,\delta)$ be the group presented by
\begin{align*}
G(\eta,t,\delta)
={}&\l v,a,x\mid
v^{2^n}=1,\quad v^a=v^{-1+4t},\quad v^x=v^{-1},\\
&[a,x]=v,\quad [v,a^{2^{m-1}}]=1,\quad
a^{2^m}=z^{\eta},\quad x^2=z^{\delta}\r.
\end{align*}
Put $N=\l v,x\r$ and $Z=\l a\r$. Then $G(\eta,t,\delta)=NZ$, where
$N$ is dihedral or generalized quaternion, $Z$ is cyclic, and $Z\cap N\le\l z\r=\Z(N)$.
Moreover, $G'=\l v\r\cong\ZZ_{2^n}$ and $\lvert G(\eta,t,\delta)\rvert=2^{m+n+1}$.
Thus $m,n$ determine the order of $G$ and the cyclic group $G'$, while $\eta,t,\delta$ record the remaining
defining relations within this family.

\vskip0.1in
The main results of this paper are as follows.
\begin{theorem}\label{main-1}
Let $\mathcal{D}$ be a regular dessin whose underlying graph is the complete bipartite graph $\K_{2^e,2^f}$, where $e\geq f\geq 2$.
Assume that the automorphism group $G = \Aut(\mathcal{D})$ is non-metacyclic and has a cyclic commutator subgroup.
Then one of the following holds.
\begin{itemize}
\item[(1)] $G\cong G_0$, where
\[
G_0=\l x,y\mid x^4=y^2=z^2=1,\ [x,y]=z,\ [x,z]=[y,z]=1\r,
\]
and in this case $e=f=2$.
\item[(2)] $G\cong G(\eta,t,\delta)$ for some integers $m,n>1$.
In this case \((e,f)=(n+1,m)\) and, necessarily, \(n\geq m-1\).
Then, up to isomorphism, $G$ is exactly one of the following groups:
\begin{itemize}
\item[(i)] $G(0,2^{\lambda},0)$ for every $\lambda\in\mathcal L_{m,n}$;
\item[(ii)] $G(0,2^{\lambda},1)$ for every $\lambda\in\mathcal L_{m,n}$ with $\lambda\geq n-m$;
\item[(iii)] $G(1,2^{\lambda},0)$ when $n=m$ and $\lambda\in\mathcal L_{n,n}$,
\end{itemize}
where
\[
\mathcal L_{m,n}
=\bigl\{\lambda\in\mathbb Z:
\max\{0,n-m-1\}\le\lambda\le n-2\bigr\}.
\]
All groups listed in (i)--(iii) are pairwise non-isomorphic and each of them occurs.
Equivalently, these are precisely the isomorphism classes in
Theorem~\ref{thm:group-classification} that admit exact bicyclic generating
pairs of type \((2^e,2^f)\).
\end{itemize}
\end{theorem}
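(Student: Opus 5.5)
The plan is to go through the Berkovich--Janko structure theorem, as recorded in Lemma~\ref{lem:cyclic-commutator}, which lists the non-metacyclic two-generated $2$-groups with cyclic $G'$. By the dessin--group correspondence of Section~2, $G=\Aut(\calD)=\l b\r\l w\r$ is bicyclic with $\l b\r\cap\l w\r=1$, $|b|=2^e$, $|w|=2^f$, so $G$ lies in one of the two families of that lemma. The first family has order $16$ and is $G_0$. There $|G|=2^{e+f}=16$ and $e\ge f\ge2$ force $e=f=2$, and $G_0$ does factor exactly as a product of two cyclic groups of order $4$ (for instance $x$ and $xy$), which gives case~(1).

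For the second family, $G=G(\eta,t,\delta)$ has order $2^{m+n+1}$. I will first find the possible exact factorizations. Since $G/G'$ is generated by the images of $a$ and $x$, with $a$ of order $2^m$ modulo $G'$ (the relation $a^{2^m}=z^\eta\in G'$) and $x$ of order $2$ modulo $G'$, any element of $G$ has order dividing $2^{\max(m,n)+1}$ or so. Writing an arbitrary element as $v^ia^jx^k$ and computing its order from the relations, I expect to show the following: the elements of maximal order $2^{n+1}$ are those with $k=1$ (products like $vx$-type elements inside or near $N$ times powers of $a$), and a complementary cyclic subgroup of order $2^m$ must map onto $\l \bar a\r$. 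This yields $(e,f)=(n+1,m)$. The inequality $e\ge f$ then gives $n\ge m-1$. The exactness condition $\l b\r\cap\l w\r=1$ will impose constraints on $\eta,\delta,t$. In particular, $\delta=1$ makes $N$ quaternion, so order $2^{n+1}$ elements need a correction by $a$-powers, which I expect to produce the bound $\lambda\ge n-m$. Likewise $\eta=1$ makes $a^{2^m}=z$, so a cyclic subgroup of order exactly $2^m$ meeting trivially must avoid $z$; this should force $n=m$.

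Next I will reduce $t$: replacing $a$ by $av^s$ or $a^{1+2^ku}$ and $v$ by a suitable generator of $G'$ changes $t$ within a class. Using Theorem~\ref{thm:group-classification}, I will normalize $-1+4t$ so that $t=2^\lambda\cdot(\text{odd})$ and then reduce to $t=2^\lambda$. The range $\lambda\le n-2$ comes from $4t$ modulo $2^n$, with the case $4t\equiv0$ absorbed into $\lambda=n-2$ or excluded. The lower bound $\max\{0,n-m-1\}$ should come from the relation $[v,a^{2^{m-1}}]=1$, i.e. $(-1+4t)^{2^{m-1}}\equiv1\pmod{2^n}$, which needs $v_2(4t)+m-1\ge n$ roughly. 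Pairwise non-isomorphism is quoted from Theorem~\ref{thm:group-classification}. Alternatively I would distinguish the groups by invariants: $\lambda$ is detected by the action of $G$ on $G'$ (the image of $G\to\Aut(G')$), $\delta$ by whether $\Omega$-type or square-class data of the coset $xG'$ contain involutions, and $\eta$ by whether $\mho_m$ of $G$ meets $G'$ nontrivially.

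Finally, existence: for each listed parameter set I exhibit explicit $b$ of order $2^{n+1}$ and $w$ of order $2^m$. The natural choices are $b=vx\,a^{\epsilon}$-type and $w=a x^{\epsilon'}$-type, adjusted in the $\delta=1$ or $\eta=1$ cases, and I verify $\l b\r\cap\l w\r=1$ by counting, since $|\l b\r\l w\r|=2^{m+n+1}$ then follows. I expect the main obstacle to be the order computations in $G(\eta,t,\delta)$. This means computing $(v^ia^jx)^{2^r}$ by a collection formula with the twisting exponent $-1+4t$ and the central $z$-corrections. From those computations one must extract exactly when the cyclic subgroups of orders $2^{n+1}$ and $2^m$ intersect trivially, which is where the thresholds $\lambda\ge n-m$ and $n=m$ appear. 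I would attack it by first computing in the quotient $G/\l z\r$ and then tracking only the $z$-component.
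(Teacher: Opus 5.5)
\textbf{Part (1): a genuine gap.} This part rests on a misreading of Lemma~\ref{lem:cyclic-commutator}(1). That family is $\langle x,y\mid x^{2^n}=y^2=z^2=1,\ [x,y]=z,\ z\text{ central}\rangle$ for \emph{every} $n\ge2$, so it has order $2^{n+2}$; it is not only the group of order $16$. The whole content of case (1) is that only $n=2$ admits an exact factorization of type $(2^e,2^f)$ with $e\ge f\ge2$. Your sentence ``$|G|=16$ forces $e=f=2$'' assumes exactly this. The missing argument (Lemma~\ref{lem:G0-generating-pairs}) is short:
\begin{itemize}
\item[(a)] The exponent of this group is $2^n$, so Lemma~\ref{exponent} and $e+f=n+2$ give $(e,f)=(n,2)$.
\item[(b)] Since $(x^iy^jz^k)^2=x^{2i}z^{ij}$, for $n\ge3$ every $b$ of order $2^n$ satisfies $b^{2^{n-1}}=x^{2^{n-1}}$.
\item[(c)] Likewise, for $n\ge3$ every $w$ of order $4$ satisfies $w^2=x^{2^{n-1}}$. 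Hence $\langle b\rangle\cap\langle w\rangle\ne1$, so $n=2$.
\end{itemize}

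\textbf{Part (2): a wrong mechanism.} Your skeleton matches the paper's: an exactness analysis, then intersection with Theorem~\ref{thm:group-classification}. However, you expect the restriction $\lambda\ge n-m$ in (ii) to come from $\langle b\rangle\cap\langle w\rangle=1$ when $\delta=1$. That cannot succeed, because it is false.

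By Eq.~(4.4), $(ax)^{2^n}=a^{2^n}z$ whatever $x^2$ is. So, using $G=\langle ax\rangle\langle a\rangle$, the pair $(ax,a)$ is an exact pair of type $(2^{n+1},2^m)$ in $G(0,2^\lambda,\delta)$ for both values of $\delta$. In particular $G(0,2^{n-m-1},1)$ does have exact pairs. It is missing from the list only because $G(0,2^{n-m-1},1)\cong G(0,2^{n-m-1},0)$. Indeed, replacing $x$ by $h=a^{2^{m-1}}x$ keeps every relation except $h^2=z^{\eta+\delta+2^{m+\lambda-n+1}}$ (Lemma~\ref{lem:iso-delta}).

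Similarly, $G(1,2^\lambda,1)$ is absent because exactness forces $n=m$ when $\eta=1$. Then $\lambda\ge0=n-m$, which gives $G(1,2^\lambda,1)\cong G(1,2^\lambda,0)$. So exactness yields only $(e,f)=(n+1,m)$, $n\ge m-1$, and $\eta=1\Rightarrow n=m$. The shape of (ii)--(iii) comes from the isomorphism classification.

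\textbf{Smaller problems in part (2).}
\begin{itemize}
\item[(a)] Your heuristic is that the factor of order $2^{n+1}$ involves $x$ while the factor of order $2^m$ maps onto $\langle\bar a\rangle$. This is reversed when $\eta=1$, where $(b,w)=(a,ax)$, and both shapes occur when $n=m-1$.
\item[(b)] Your fallback invariant for $\eta$ fails: $\mho_m(G)\cap G'\ne1$ also holds when $\eta=0$ and $n\ge m$, since $(ax)^{2^m}=v^{2^{m-1}u}$.
\item[(c)] Your fallback invariant for $\delta$ fails too: whether $xG'$ contains involutions changes under $x\mapsto a^{2^{m-1}}x$. The paper uses instead the total number of involutions and $|\Z(G)|$. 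Your $\lambda$-invariant, $|G:C_G(G')|=2^{n-\lambda-1}$, is fine.
\item[(d)] The decisive step is only anticipated in your proposal. This is the power formula for $(a^i(ax)^j)^{2^s}$ together with the parity case analysis that gives $(e,f)=(n+1,m)$.
\item[(e)] Existence, by contrast, is immediate from $G=\langle ax\rangle\langle a\rangle$: take $(b,w)=(ax,a)$ if $\eta=0$, and $(b,w)=(a,ax)$ if $\eta=1$ and $n=m$.
\end{itemize}
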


\medskip
\noindent\textit{Remark on Theorem~\ref{main-1}.}
The choices of representatives and the omissions in Theorem~\ref{main-1} are
justified as follows:
\begin{enumerate}
\item[(1)] Here \(\lambda\) denotes the normalized exponent of the power of
\(2\) in \(t\). If \(t\not\equiv0\pmod{2^{n-2}}\), then \(\lambda\) is the
exponent of \(2\) in \(t\). If \(t\equiv0\pmod{2^{n-2}}\), then
\(v^a=v^{-1}\); in this case we take the representative \(t=2^{n-2}\) and set
\(\lambda=n-2\). Consequently, no value \(\lambda>n-2\) is needed.
\item[(2)] When \(\lambda=n-m-1\), we have
\(G(0,2^{\lambda},1)\cong G(0,2^{\lambda},0)\).
\item[(3)] When \(\eta=\delta=1\), we have
\(G(1,2^{\lambda},1)\cong G(1,2^{\lambda},0)\) for \(\lambda\geq n-m\),
while \(G(1,2^{\lambda},1)\cong G(0,2^{\lambda},0)\) for
\(\lambda=n-m-1\).
\end{enumerate}

Having classified the possible automorphism groups in Theorem~\ref{main-1}, we now enumerate the corresponding dessins.

\begin{theorem}\label{main-2}
For $e\geq f\geq 2$, let
$n_G(\mathcal{K}_{2^e,2^f})$ denote the number of non-isomorphic regular dessins with underlying graph
$\K_{2^e,2^f}$ whose automorphism group is isomorphic to $G$, where $G$ is one of the
groups in Theorem~\ref{main-1}. Then the following hold.
\begin{itemize}
\item[(1)] If $G=G_0$, then $e=f=2$ and
\[
n_G(\mathcal{K}_{2^e,2^f})=1.
\]
\item[(2)] If $G=G(\eta,t,\delta)$, then
\[
n_G(\mathcal{K}_{2^e,2^f})=
\begin{cases}
2^{2f-3} & \text{if }e=f\geq3,\\
2^{2f-2} & \text{if }e>f.
\end{cases}
\]
\end{itemize}
\end{theorem}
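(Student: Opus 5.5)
The count rests on the orbit correspondence recalled in the introduction. For a fixed group $G$, isomorphism classes of regular dessins with automorphism group $G$ correspond to $\Aut(G)$-orbits on exact generating pairs $(b,w)$ with $|b|=2^e$, $|w|=2^f$, $G=\l b\r\l w\r$ and $\l b\r\cap\l w\r=1$. Since $G=\l b,w\r$, an automorphism fixing $b$ and $w$ is trivial, so $\Aut(G)$ acts semiregularly on such pairs. Hence
\[
n_G(\K_{2^e,2^f})=\frac{|\mathcal P(G)|}{|\Aut(G)|},
\]
where $\mathcal P(G)$ is the set of exact pairs of type $(2^e,2^f)$. I therefore plan two computations: count $\mathcal P(G)$ directly, and take $|\Aut(G)|$ from Theorem~\ref{thm:aut-Geta}, together with a direct computation for $G_0$.

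For $G_0$ (order $16$, $e=f=2$), I will first show that $b$ and $w$ must both have order $4$ and generate $G$. Elements of order $4$ are those outside $\l y,z\r\Omega$-type subgroups; concretely they lie in the cosets $x\Phi(G)$ and $xy\Phi(G)$. I will count them and check the condition $\l b\r\cap\l w\r=1$, i.e.\ $b^2\neq w^2$. Then I will compute $|\Aut(G_0)|$ by counting the images of $(x,y)$ that satisfy the relations. The target equality $|\mathcal P|=|\Aut(G_0)|$ should be a short finite check, which I would confirm by hand and, if needed, by a computer search.

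For $G=G(\eta,t,\delta)$ we have $(e,f)=(n+1,m)$, and I work modulo $G'=\l v\r$. The quotient $G/G'$ is abelian of type $(2^m,2)$ or similar, generated by the images of $a$ and $x$. An exact pair needs $|w|=2^m$ and $|b|=2^{n+1}$, with $b$ playing the role of an element of $N$-type such as $xa^{i}$ times powers of $v$. I will parametrize the candidates as
\[
b=a^{i}x\,v^{j}, \qquad w=a^{k}v^{l}x^{\epsilon}.
\]
The conditions are that $\{\bar b,\bar w\}$ generates $G/\Phi(G)$, that the orders are correct, and that the intersection is trivial. I will express these as congruence conditions on $i,j,k,l$ via the power formulas $(a^kv^l)^{2^s}$, computed using $v^a=v^{-1+4t}$. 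Counting the solutions gives $|\mathcal P|$. I expect roughly $2^{m}\cdot 2^{n}\cdot(\text{constant})$ choices for $b$ and a comparable number for $w$, with a uniform factor $2$ lost to the intersection condition.

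The main obstacle will be the order and intersection conditions. These require careful evaluation of $(a^kv^lx^\epsilon)^{2^s}$ in terms of $\lambda$, $\eta$ and $\delta$. The case $e=f$, that is $n=m-1$, is exactly where an extra collision (for instance $b^{2^{n}}=w^{2^{m-1}}=z$ for half the pairs) removes a further factor of $2$. I would first show that $|\mathcal P(G)|$ is independent of $\lambda$, $\eta$ and $\delta$ up to the same factor appearing in $|\Aut(G)|$, and then divide by Theorem~\ref{thm:aut-Geta}. This should produce $2^{2f-2}$ when $e>f$ and $2^{2f-3}$ when $e=f\geq3$. As a sanity check, I will verify the smallest cases, $m=2$ with small $n$, by explicit enumeration.
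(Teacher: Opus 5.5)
Your treatment of $G_0$ is fine and matches the paper. The elements of order $4$ are exactly those in $x\Phi(G_0)\cup xy\Phi(G_0)$, and an exact pair takes one element from each coset. Their squares $x^2$ and $x^2z$ then differ automatically, so $|\Omega|=32=|\Aut(G_0)|$. The gap is in part (2): your plan needs $|\mathcal P(G)|/|\Aut(G)|$ to be the same for every $G(\eta,t,\delta)$ with given $(m,n)$, and this is false.
\begin{itemize}
\item By Lemma~\ref{lem:generating-pairs-second}, $|\Omega_G|$ depends only on $(m,n)$.
\item By Theorem~\ref{thm:aut-Geta}, $|\Aut(G)|$ depends on $\lambda_0$ and $\eta$. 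For $\eta=0$ and $n\ge m$ it equals $2^{2n+m}$, $2^{n+m+\lambda_0+3}$ or $2^{2n+1}$ according as $\lambda_0=n-2$, $n-m\le\lambda_0\le n-3$ or $\lambda_0=n-m-1$, and it changes again when $\eta=1$.
\item Already for $m=n=2$ the three classes $G(0,1,0)$, $G(0,1,1)$, $G(1,1,0)$ carry $1$, $1$ and $2$ dessins respectively, not $4$ each.
\end{itemize}
What the paper proves (Lemma~\ref{lem:count-second}) is that the \emph{total} over all isomorphism classes in the family with $(e,f)=(n+1,m)$ is $2^{2f-2}$ or $2^{2f-3}$. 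Part (2) has to be read in that sense.

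So the missing ingredient is the isomorphism classification, Theorem~\ref{thm:group-classification}. You need exactly one representative per class, because different parameters give isomorphic groups and summing over parameter values would overcount. For example, the odd part of $t$ is irrelevant, $G(1,2^{\lambda},1)\cong G(1,2^{\lambda},0)$ for $\lambda\ge n-m$, and $G(0,2^{\lambda},1)\cong G(0,2^{\lambda},0)$ for $\lambda=n-m-1$. You also need to know which classes contribute: $G(1,2^{\lambda_0},0)$ only when $n=m$, and $G(1,2^{\lambda_0},1)$ never. Summing $|\Omega|/|\Aut(G)|$ over $\lambda_0\in\mathcal L_{m,n}$ and these classes, the geometric series collapses to $2^{2m-2}$ for $n\ge m$ and to $2^{2m-3}$ for $n=m-1$.

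Your heuristic for $e=f$ is also off. When $n=m-1$ there is no extra intersection collision. On the contrary, $|\Omega|=2^{4m-3}$ is twice the one-pattern count, because $b$ may lie in either $a^{\mathrm{odd}}xG'$ or $a^{\mathrm{odd}}G'$; your ansatz $b=a^ixv^j$ misses the latter. The halving of the final total comes from which classes exist: for $n=m-1$ there is neither an $\eta=1$ class nor an endpoint class with $\lambda_0=n-m-1$. In the two $e>f$ regimes, the $\eta=1$ classes (when $n=m$) or the endpoint class (when $n>m$) supply an extra $2^{2m-3}$.
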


\vskip0.2in
\section{Preliminaries}

In this section, we present several key results that are essential for the subsequent proofs.\\

\vskip0.2in
\subsection{Methods and ideas}~~~~~~~~~~~~~~~~~
\vskip0.15in
Let $\calD$ be a regular dessin with underlying graph $\Ga=(B\cup W, E)=\K_{m,n}$,
and let $G=\Aut\calD$ be the automorphism group which preserves the orientation of the supporting surface and the colors of the vertices.
Our method uses the following result, which is described in detail in~\cite{CFLZ, Fan-2022}.

\begin{proposition}\label{fundamental}
If $G$ is a finite group, then the following are equivalent:
\begin{itemize}
\item[(1)]$G\cong\Aut\calD$ for some regular dessin $\calD$ with underlying graph $\K_{m,n}$;
\item[(2)]$G$ has cyclic subgroups $\l b\r\cong\ZZ_m$ and $\l w\r\cong\ZZ_n$ such that $$G=\l b\r\l w\r~\mbox{and}~\l b\r\cap \l w\r=1.$$
\end{itemize}
\end{proposition}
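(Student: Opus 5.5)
The statement to prove is Proposition~\ref{fundamental}, the standard dessin--group correspondence. I will prove both directions by identifying the edges of $\K_{m,n}$ with the elements of $G$ and the vertices with cosets of $\l b\r$ and $\l w\r$.

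For (1)$\Rightarrow$(2), let $\calD$ be a regular dessin on $\K_{m,n}$ with $G=\Aut\calD$ acting regularly on the edge set $E$. Fix an edge $\epsilon$ with black end $\beta$ and white end $\omega$. The orientation induces a cyclic rotation of the edges at each vertex. Since $G$ is edge-regular, the stabilizer $G_\beta$ acts regularly on the $n$ edges at $\beta$, which form a single rotation orbit. So $G_\beta=\l b\r$ is cyclic, generated by the automorphism sending $\epsilon$ to its successor in the rotation at $\beta$. Similarly $G_\omega=\l w\r\cong\ZZ_m$. The sizes are interchanged relative to the colour classes, so I will relabel to match the statement's convention $|b|=m$, $|w|=n$. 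Regularity on edges gives
\[
\l b\r\cap\l w\r\le G_\epsilon=1 .
\]
Every edge of $\K_{m,n}$ joins a neighbour of $\omega$ to a neighbour of $\beta$, and every pair of such vertices is joined by exactly one edge. Hence every edge has the form $\epsilon^{b^iw^j}$ for suitable $i,j$. Edge-regularity then yields $G=\l b\r\l w\r$. Alternatively, $|G|=|E|=mn=|\l b\r\l w\r|$.

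For (2)$\Rightarrow$(1), I construct the dessin from the group. Take edges $E=G$, black vertices the right cosets $\l b\r g$, and white vertices the right cosets $\l w\r g$. The edge $g$ is incident with $\l b\r g$ and $\l w\r g$. The rotations at black and white vertices are left multiplication by $b$ and $w$ respectively, and $G$ acts by right multiplication. Exactness is needed in two places.
\begin{itemize}
\item The factorization $G=\l b\r\l w\r$ gives transitivity of $\l w\r$ on the black cosets, which makes the graph connected.
\item The trivial intersection $\l b\r\cap\l w\r=1$ ensures that each black coset and each white coset share exactly one edge.
\end{itemize}
Together with the counts $|G:\l b\r|=n$ and $|G:\l w\r|=m$, these show that the underlying graph is $\K_{m,n}$ (after the same relabeling of sizes as above).

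The permutations $\rho_B$ (left multiplication by $b$), $\rho_W$ (left multiplication by $w$), and $\rho_B\rho_W$ define an oriented bipartite map via the standard Jones--Singerman permutation representation. Right multiplication commutes with left multiplication, so $G$ acts on this map by automorphisms, regularly on $E$. The map is therefore a regular dessin with automorphism group $G$.

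The main obstacle is the bookkeeping of which coset space corresponds to which colour, together with the verification that the construction gives a genuine 2-cell embedding. The second point is the standard fact that any transitive pair of permutations defines an orientable map.
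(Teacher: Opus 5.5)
Your proof is correct and is the standard stabilizer/coset correspondence; the paper does not prove this proposition itself but quotes it from \cite{CFLZ, Fan-2022}, where the same construction is used. Two steps should be made explicit. First, the inclusion $\langle b\rangle\cap\langle w\rangle\le G_\epsilon$ uses that $\epsilon$ is the \emph{unique} edge joining $\beta$ and $\omega$, i.e.\ that $\K_{m,n}$ is simple. Second, in (2)$\Rightarrow$(1) you only show $G\le\Aut\calD$, so you also need the semiregularity of $\Aut\calD$ on $E$ (equivalently, that the centralizer of the left regular representation is the right regular one), which gives $|\Aut\calD|\le|E|=|G|$ and hence equality.
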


In this situation, we say that the group $G$ is an {\it exact bicyclic group}, namely, it is a product of two cyclic subgroups with trivial intersection.
The pair $(b,w)$ is called a {\it generating pair}, and the ordered pair $(o(b),o(w))$ is called its {\it type}.

For convenience, the regular dessin corresponding to the triple $(G,b,w)$ is denoted by $$\calD(G,b,w).$$
By~\cite{Fan-2022}, the dessin $\calD(G,b,w)$ defined as in Proposition~\ref{fundamental} has $m+n$ vertices,
$mn$ edges, and face length $2|bw|$. Hence its genus is
\[{\bf g}=1+\frac{1}{2}(|E|-|V|-|F|)=1+\frac{|G|}{2}(1-\frac{1}{|b|}-\frac{1}{|w|}-\frac{1}{|bw|}).\]

The following criterion determines when two regular dessins are isomorphic; see~\cite[Lemma 2.8]{CFLZ} for details.

\begin{lemma}\label{iso-criterion}
Two regular dessins $\calD(G,b,w)$ and $\calD(G,b',w')$ are isomorphic if and only if there is an automorphism $\s\in \Aut(G)$ such that $(b,w)^\s=(b',w')$.
\end{lemma}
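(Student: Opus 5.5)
The plan is to pass through the standard description of a regular dessin as a regular permutation representation of the free group $F_2=\l \beta,\omega\r$. Given the triple $(G,b,w)$, the dessin $\calD(G,b,w)$ is the one whose edge set is $G$ itself. Its black vertices are the left cosets $g\l b\r$ and its white vertices are the cosets $g\l w\r$, with incidence given by containment. The local rotations around black and white vertices are right multiplication by $b$ and by $w$, respectively. Then $\Aut\calD(G,b,w)$ is $G$ acting by left multiplication, which commutes with the rotations. Conversely, every regular dessin with monodromy generators of orders $m,n$ arises in this way, which is how Proposition~\ref{fundamental} is obtained.

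Under this description a dessin is a transitive $F_2$-set, with $\beta$ acting as the black rotation and $\omega$ as the white rotation. A dessin isomorphism (orientation and colour preserving) is exactly an $F_2$-equivariant bijection of edge sets. The first step is to make this precise: the monodromy group of $\calD(G,b,w)$ is the image of the homomorphism $\pi:F_2\to G$ with $\beta\mapsto b$ and $\omega\mapsto w$, acting on $G$ by right multiplication. Since this action is regular, the edge stabilizer equals $N=\Ker\pi$, which is normal. The second step uses the fact that two transitive $F_2$-sets are isomorphic iff their point stabilizers are conjugate, and for normal stabilizers this means they are equal. Hence $\calD(G,b,w)\cong\calD(G,b',w')$ iff $\Ker\pi=\Ker\pi'$, where $\pi'$ sends $\beta\mapsto b'$ and $\omega\mapsto w'$.

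The final step converts equality of kernels into an automorphism. If $\Ker\pi=\Ker\pi'=N$, then both $\pi$ and $\pi'$ induce isomorphisms $F_2/N\to G$. The composite $\s=\pi'\circ\pi^{-1}$ is then a well-defined automorphism of $G$ with $b^\s=b'$ and $w^\s=w'$. Conversely, if such a $\s$ exists, then $\pi'=\s\circ\pi$, so the kernels coincide. One can also check this directly: the map $g\mapsto g^\s$ on edges carries cosets of $\l b\r,\l w\r$ to cosets of $\l b'\r,\l w'\r$ and intertwines right multiplication by $b,w$ with right multiplication by $b',w'$, so it is a dessin isomorphism.

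The only delicate point is the second step, namely that a dessin isomorphism is the same thing as an $F_2$-set isomorphism. This requires that orientation be preserved, so that rotations correspond to rotations rather than their inverses, and that colours be preserved, so that $\beta$ is not swapped with $\omega$. I would handle it by citing the standard equivalence, as in \cite{CFLZ}, between oriented bipartite maps and transitive permutation representations of $F_2$.
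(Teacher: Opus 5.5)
Your proof is correct and is the standard argument. The paper gives no proof of its own here, only the citation \cite[Lemma 2.8]{CFLZ}. Your reduction to $\Ker\pi=\Ker\pi'$ for the epimorphisms $\pi,\pi'\colon F_2\to G$, with $\sigma=\pi'\circ\pi^{-1}$ giving the automorphism, is exactly the content that citation supplies.

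The only hypothesis you use tacitly is that $\pi$ and $\pi'$ are onto. This is needed both for regularity of the $F_2$-action and for $F_2/\Ker\pi\cong G$, and it holds since $G=\langle b\rangle\langle w\rangle=\langle b'\rangle\langle w'\rangle$.
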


If $\calD(G,b,w)\cong\calD(G,b',w')$, the two triples $(G,b,w)$ and $(G,b',w')$ are called {\it equivalent.}
This result implies that for each triple $(G,b,w)$, if $G=\langle b\rangle\langle w\rangle\cong\ZZ_m\ZZ_n$ and $\l b\r\cap \l w\r=1$,
then the triple $(G,b,w)$ gives rise to a dessin $\calD(G,b,w)$ with underlying graph $\K_{m,n}$ on which $G=\Aut\calD$ acts edge-regularly.
Therefore, by Proposition~\ref{fundamental} and Lemma~\ref{iso-criterion}, classifying and enumerating regular dessins of $\K_{m,n}$
is equivalent to finding all non-equivalent triples $(G,b,w)$ under the action of $\Aut(G)$.

Let $$\Omega=\{(b,w)\in G\times G~|~\l b\r\l w\r=G, |b|=m,|w|=n, \l b\r\cap \l w\r=1\}.$$
According to the above analysis, the non-isomorphic regular dessins of $\K_{m,n}$ associated with $G$ correspond to the orbits of $\Aut(G)$ on $\Omega$.

\begin {lemma}\label{semi-regular}
$\Aut(G)$ acts on $\Omega$ semi-regularly.
\end {lemma}

\proof Let $\sigma$ be an arbitrary automorphism in $\Aut G$ such that $(b,w)^\sigma=(b,w)$, where $(b,w)\in \Omega$.
Then $b^\sigma=b$ and $w^\sigma=w$. It follows that $\sigma$ fixes every generating element of \(G\), so \(\sigma=1\).
Therefore, $\Aut(G)$ acts semi-regularly on $\Omega$.\qed
This lemma implies that the number of regular dessins of $\K_{m,n}$ associated with the fixed group $G$
is equal to $\frac{|\Omega|}{|\Aut(G)|}$.\\

\vskip0.2in
\subsection{Non-metacyclic bicyclic $2$-groups with cyclic commutator subgroup}~~~~~~~~~~~~~~
\vskip0.15in

This paper, together with the subsequent two articles~\cite{Fan-2,Fan-3}, is devoted to completing the classification of regular dessins
 whose underlying graph is $\Gamma=\K_{2^e,2^f}$ and whose automorphism group is non-metacyclic, where $e\geq f\geq2$.
The approach involves groups $G$ that are non-metacyclic bicyclic $2$-groups.
Here $d(G)$ denotes the minimum number of generators of $G$. According to~\cite{B-J}, a non-metacyclic $2$-group is bicyclic
if and only if $d(G)=2$ and it has exactly one maximal subgroup that is non-metacyclic.

\begin{proposition}\label{pro-1}
Let $G$ be a non-metacyclic $2$-group. Then $G$ is bicyclic if and only if $d(G)=2$ and $G$ has exactly one non-metacyclic maximal subgroup.
\end{proposition}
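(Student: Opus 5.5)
Since the statement is attributed to Berkovich and Janko~\cite{B-J}, the plan is to reconstruct the argument from standard facts about $2$-groups and cite their structure theorem for the hardest part.

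For the forward direction, let $G=\l a\r\l b\r$ be a non-metacyclic $2$-group. Then $G=\l a,b\r$, and $G$ is not cyclic, so $d(G)=2$. Since $G/\Phi(G)\cong\ZZ_2^2$ is generated by the images of $a$ and $b$, the three maximal subgroups are
\[
M_a=\l a,\Phi(G)\r,\qquad M_b=\l b,\Phi(G)\r,\qquad M_{ab}=\l ab,\Phi(G)\r.
\]
The Dedekind law gives $M_a=\l a\r(M_a\cap\l b\r)=\l a\r\l b^2\r$ and $M_b=\l a^2\r\l b\r$. So every maximal subgroup is again bicyclic, and the problem becomes counting the non-metacyclic ones.

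\emph{At least one maximal subgroup is non-metacyclic.} If all three were metacyclic, $G$ would be a non-metacyclic $2$-group all of whose maximal subgroups are metacyclic. Blackburn's classification of minimal non-metacyclic $2$-groups then forces $G$ to be one of a short list: $\ZZ_2^3$, $Q_8\times\ZZ_2$, $Q_8*\ZZ_4$, or the unique group of order $32$. I will check directly that none of these is a product of two cyclic subgroups. Using $|G|\le|a||b|$, the exponents of these groups are too small for $|G|=|a||b|/|\l a\r\cap\l b\r|$ to hold.

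\emph{At most one maximal subgroup is non-metacyclic.} Here I would use Itô--Ôhara type information on products of two cyclic $2$-groups: $\l a\r\l b^2\r$ and $\l a^2\r\l b\r$ cannot both be non-metacyclic, and the same holds for any pair involving $M_{ab}$. The idea is to pass to $\bar G=G/\mho_2(G)$, or to a suitable quotient of small order where metacyclicity of sections is detected. Two non-metacyclic maximal subgroups would force $\Omega$- or $\mho$-subgroups incompatible with the factorization. I would try to organise this as an induction on $|G|$, applying the forward statement to the bicyclic maximal subgroups themselves.

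For the converse, assume $d(G)=2$ and that $G$ has exactly one non-metacyclic maximal subgroup $M$. The plan is to invoke Janko's structure theorem for such groups~\cite{J}, which gives explicit presentations; for the cyclic-commutator case these include the presentations of $G(\eta,t,\delta)$ and $G_0$. From each presentation I would exhibit explicit cyclic subgroups $\l b\r$ and $\l w\r$ with $|b||w|/|\l b\r\cap\l w\r|=|G|$. For $G(\eta,t,\delta)$, for instance, I would take $b=vx$ or $b=xa^{i}$ and $w=a$, and verify by order counting that $|\l b\r\l w\r|=|G|$.

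The main obstacle is the converse. It depends on the full Berkovich--Janko classification, and choosing a factorization uniformly across all families is delicate. I would therefore treat the three commutator cases (i)--(iii) separately and cite~\cite{B-J} for the general statement.
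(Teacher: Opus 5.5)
The paper gives no proof of this proposition. It is quoted from Berkovich and Janko (Janko's theorem in their Chapter~87), so the two honest options are to cite it or to prove it completely, and your proposal does neither.

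The substantive half of the forward direction is that a bicyclic non-metacyclic $G$ has \emph{at most one} non-metacyclic maximal subgroup. You only restate this and list tools you might try ($G/\mho_2(G)$, It\^o--\^Ohara, induction). No step derives a contradiction from two non-metacyclic maximal subgroups.

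Worse, the induction you have in mind rests on a false claim. The Dedekind law gives $M_a=\langle a\rangle\langle b^2\rangle$ and $M_b=\langle a^2\rangle\langle b\rangle$, but says nothing about $M_{ab}$, and $M_{ab}$ need not be bicyclic. Take $G_0=\langle x\rangle\langle xy\rangle$ from Section~3 with $a=x$, $b=xy$. Then $\Phi(G_0)=\langle x^2,z\rangle$ and $M_{ab}=\langle y,x^2,z\rangle\cong\ZZ_2^3$, which is not even $2$-generated.

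This is typical, not accidental. For $G(\eta,t,\delta)=\langle ax\rangle\langle a\rangle$, the two maximal subgroups containing a factor are $\langle a\rangle G'$ and $\langle ax\rangle G'$. Both are metacyclic, since each has cyclic normal subgroup $G'$ with cyclic quotient. So the non-metacyclic maximal subgroup must be the third one, $\langle v,x,a^2\rangle$. An induction through the bicyclic subgroups $M_a,M_b$ therefore never reaches the subgroup that has to be controlled.

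The remaining parts have smaller problems.
\begin{itemize}
\item Your ``at least one'' argument via Blackburn's list of minimal non-metacyclic $2$-groups is sound, with one slip. The exponent bound does not dispose of $Q_8\times\ZZ_2$ and $Q_8*\ZZ_4$: they have order $16$ and exponent $4$, so $|a||b|=16$ is possible. For these, and for $\ZZ_2^3$, use $d(G)=3$ instead.
\item The converse is, as in the paper, only an appeal to the Berkovich--Janko classification. The one concrete piece you add is wrong. Since $vx\in N$ satisfies $(vx)^2=x^2=z^\delta$, in every case $|\langle vx\rangle\langle a\rangle|\le 2^{m+2}<2^{m+n+1}=|G|$. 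The paper's factorization is $G=\langle ax\rangle\langle a\rangle$.
\item Cases (ii) and (iii) of the classification are not addressed at all.
\end{itemize}

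As it stands, your argument proves only that $d(G)=2$ and that a non-metacyclic maximal subgroup exists. Uniqueness and the converse must still be taken from Berkovich--Janko.
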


For a group $G$, let $G'$ and $\Phi(G)$ denote its commutator subgroup and Frattini subgroup, respectively, and for $x, y \in G$, let $[x, y] = x^{-1}y^{-1}xy$ denote the commutator.

Berkovich and Janko gave a structural classification of all non-metacyclic $2$-generated $2$-groups
that have exactly one non-metacyclic maximal subgroup (see Chapter 87 of~\cite{B-J}).
Thus the possible presentation types of non-metacyclic bicyclic $2$-groups are known.
This presentation-based classification does not by itself determine which presentations in a parameterized family define isomorphic groups.

In the Berkovich--Janko classification, the commutator subgroup $G'$, which is nontrivial and abelian,
falls into the following three cases:
\begin{itemize}
\item $G'$ is cyclic;
\item $G'$ contains a cyclic subgroup of index $2$;
\item $G'$ contains a cyclic subgroup of index $2^r$ with $r>1$.
\end{itemize}
Since the present paper is concerned only with the first case, we record only the corresponding structural
classification in~\cite[Theorems 87.9, 87.12]{B-J}; the full lists for the other two cases will be given
in the subsequent papers in this series.

\begin{lemma}\label{lem:cyclic-commutator}
Let $G$ be a $2$-group with exactly one non-metacyclic maximal subgroup and $d(G)=2$. If $G'$ is cyclic, then one of the following holds.
\begin{itemize}
\item[(1)] If $G'\cong\ZZ_2$, then $G$ has the presentation
\[
G=\l x,y\mid x^{2^n}=y^2=z^2=1,\ n\geq 2,\ [x,y]=z,\ [x,z]=[y,z]=1\r.
\]
This group is non-metacyclic and minimal non-abelian, with $G/G'$ of type $(2^n,2)$.
Moreover, $G'=\l z\r$ is a minimal cyclic subgroup of $G$, and $G=\l x\r\l xy\r$.
\item[(2)] If $G'\cong\ZZ_{2^n}$ with $n>1$, then $G=NZ$ for some $m>1$, where $N$ is a normal subgroup of $G$ of the form
\[
N=\l v,x\mid v^{2^n}=1,\ z=v^{2^{n-1}},\ x^2\in\l z\r,\ v^x=v^{-1}\r,
\]
which is dihedral or generalized quaternion, and $Z=\l a\r$ is cyclic such that
$Z\cap N\leq\l z\r=\Z(N)$, $\lvert Z/(Z\cap N)\rvert=2^m$, $[a,x]=v$,
$v^a=v^{-1+4t}$ for some $t\in\ZZ$, and $[v,a^{2^{m-1}}]=1$.
Here $G'=\l v\r\cong\ZZ_{2^n}$, $\Phi(G)=G'\l a^2\r$,
and $G=\l ax\r\l a\r$ is a product of two cyclic subgroups.\\
\end{itemize}
\end{lemma}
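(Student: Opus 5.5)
This lemma is a restatement of \cite[Theorems 87.9, 87.12]{B-J}, so the plan is to reduce to those theorems and then check the extra claims stated here. By Proposition~\ref{pro-1}, $G$ is a non-metacyclic two-generator $2$-group with a unique non-metacyclic maximal subgroup, and these are exactly the groups classified in \cite[Chapter 87]{B-J}. In that chapter the groups with cyclic $G'$ split according to whether $|G'|=2$ or $|G'|\ge 4$. I will take the presentations from the two theorems and verify the supplementary assertions one at a time.

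Case $G'\cong\ZZ_2$. A two-generator group with $|G'|=2$ is minimal non-abelian: every proper subgroup $H$ satisfies $H\Phi(G)\neq G$, and $\Phi(G)\le \Z(G)$ because $G'\le\Z(G)$ and $[x^2,y]=[x,y]^2=1$. Proper subgroups are then abelian. Among Rédei's minimal non-abelian groups $\l x,y\mid x^{2^n}=y^{2^k}=1,[x,y]=z,z^2=[x,z]=[y,z]=1\r$, the ones that are non-metacyclic with a unique non-metacyclic maximal subgroup are those with $k=1$. Granting Theorem 87.9, I only need to show $G=\l x\r\l xy\r$. The element $xy$ has $(xy)^2=x^2y^2[y,x]=x^2z$, so $(xy)^{2^n}=x^{2^n}z^{2^{n-1}}$, which is $1$ when $n\ge2$. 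Hence $|\l xy\r|\le 2^n$. Moreover $\l x\r\cap\l xy\r$ is trivial: otherwise some nontrivial element of it lies in $\l x^{2^{n-1}}\r$, and comparing images in $G/\l x\r$ gives a contradiction. Counting $|\l x\r||\l xy\r|=2^{n+1}\cdot$ against $|G|=2^{n+2}$ then requires $|xy|=2^{n+1}$. But it cannot exceed $2^n$, so instead I will use the factor pair $(x, xy)$ with the correct orders as given in B--J. The point I most need to check is the exact claim, and I expect B--J to supply it directly.

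Case $|G'|=2^n\ge4$. I will take Theorem 87.12 verbatim: $G=NZ$ with $N=\l v,x\r$ dihedral or generalized quaternion and $Z=\l a\r$, together with the stated action relations. The remaining claims are checked as follows.
\begin{itemize}
\item $G'=\l v\r$. Since $G=\l a,x\r$, the group $G'$ is the normal closure of $[a,x]=v$, and $\l v\r$ is normal because $v^a,v^x\in\l v\r$.
\item $\Phi(G)=G'\l a^2\r$. This holds because $\Phi(G)=G'G^2$, and modulo $G'$ the group is abelian generated by $\bar a,\bar x$ with $x^2\in\l z\r\le G'$.
\item $G=\l ax\r\l a\r$. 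Since $G=\l a,x\r=\l ax,a\r$, it suffices to show that $\l ax\r\l a\r$ is a subgroup. I plan to verify this by an order count, computing $|ax|$ from $(ax)^2=a^2x^{a}x$ with $x^a=xv^{\pm1}$-type relations, and then comparing with $|G|=2^{m+n+1}$.
\end{itemize}

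The main obstacle is this last order computation for $ax$. My approach is to compute $(ax)^2\in a^2 v^{c}\l z\r$ with $c$ odd, and then show by induction that $(ax)^{2^j}$ has an $a^{2^j}$-component together with a generator-of-$G'$ component. This gives $|\l ax\r\l a\r|=|G|$.
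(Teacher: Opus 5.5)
Your top-level plan matches the paper. The paper gives no argument for this lemma at all; it simply quotes Berkovich--Janko, Theorems 87.9 and 87.12. Several of your side checks are fine: minimal non-abelianness when $|G'|=2$, $G'=\langle v\rangle$ as the normal closure of $[a,x]$, and $\Phi(G)=G'\langle a^2\rangle$. However, the two factorization claims you try to verify independently are handled incorrectly, and as written your arguments would not establish them.

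\textbf{Case $G'\cong\ZZ_2$.} Your claim $\langle x\rangle\cap\langle xy\rangle=1$ is false for $n\ge 3$. The argument comparing images in $G/\langle x\rangle$ is also not available, because $\langle x\rangle$ is not normal ($x^y=xz$). Your own computation $(xy)^2=x^2z$ gives $(xy)^4=x^4\neq1$, while $(xy)^2\notin\langle x\rangle$. Hence $\langle x\rangle\cap\langle xy\rangle=\langle x^4\rangle$ has order $2^{n-2}$, $|xy|=2^n$, and
\[
|\langle x\rangle\langle xy\rangle|=2^n\cdot 2^n/2^{n-2}=2^{n+2}=|G|.
\]
Equivalently, the right cosets $\langle x\rangle(xy)^k$ for $k=0,1,2,3$ are $\langle x\rangle,\ \langle x\rangle y,\ \langle x\rangle z,\ \langle x\rangle yz$, and these exhaust $G$. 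The contradiction you ran into comes only from the wrong intersection. The lemma never claims exactness, and indeed $G=\langle x\rangle\langle xy\rangle$ is an exact factorization only when $n=2$. That is precisely what the paper exploits in Lemma~\ref{lem:G0-generating-pairs}.

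\textbf{Case $|G'|\ge 4$.} The induction you plan fails beyond the first step. Put $r=-1+4t$. Then $(ax)^2=a^2x^2v$, but
\[
(ax)^4=a^4v^{r^2+1},\qquad r^2+1\equiv 2\pmod 8,
\]
so the $G'$-component of $(ax)^4$ generates only $\langle v^2\rangle$. The correct statement is $(ax)^{2^j}=x^{2^j}a^{2^j}v^{2^{j-1}u}$ with $u$ odd, which is the paper's (4.4). The exponent of $v$ is $\prod_{i=1}^{j-1}(r^{2^i}+1)$, a product of $j-1$ factors each $\equiv2\pmod 4$.

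Moreover, $|ax|$ alone does not give $|\langle ax\rangle\langle a\rangle|$. You must also determine $\langle ax\rangle\cap\langle a\rangle$, which your sketch never addresses. With the corrected formula this is easy:
\begin{itemize}
\item[(i)] The intersection equals $\langle (ax)^{2^s}\rangle$, where $s$ is the least index with $(ax)^{2^s}\in\langle a\rangle$.
\item[(ii)] Modulo $N$ we have $(ax)^{2^j}\equiv a^{2^j}$, so $(ax)^{2^j}\in\langle a\rangle$ exactly when $(ax)^{2^j}a^{-2^j}=x^{2^j}v^{2^{j-1}u'}$ ($u'$ odd) lies in $Z\cap N\le\langle z\rangle$.
\item[(iii)] For $j=0$ this element is a conjugate of $x$, hence not in $\langle v\rangle$.
\item[(iv)] For $1\le j\le n-1$ it lies in $\langle v\rangle$ with order at least $4$, hence not in $\langle z\rangle$.
\item[(v)] For $j=n$ it equals $z$, which lies in $\langle a\rangle$ if and only if $Z\cap N=\langle z\rangle$.
\item[(vi)] For $j=n+1$ it equals $1$.
\end{itemize}
Hence $|\langle ax\rangle:\langle ax\rangle\cap\langle a\rangle|=2^{n+1}/|Z\cap N|$. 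Since $|a|=2^m|Z\cap N|$,
\[
|\langle ax\rangle\langle a\rangle|=2^m|Z\cap N|\cdot 2^{n+1}/|Z\cap N|=2^{m+n+1}=|G|.
\]

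With these two repairs your supplementary verification is complete. Without them, you are relying on the citation exactly as the paper does, and the arguments you wrote for the two factorizations do not stand.
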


\vskip0.2in
\subsection{A basic property of the automorphism group}~~~~~~~~~~~~~~~~~~~~~~~~~~~~~~~~~
\vskip0.15in
In this subsection we record a basic property of the automorphism group that will be used repeatedly
in the subsequent classification.
Let $\Gamma=\K_{2^e,2^f}$ be a complete bipartite graph and  let $\calD$ be a regular dessin of $\Gamma$ with $G=\Aut\calD$ being non-metacyclic.
Then by Proposition~\ref{fundamental}, $G$ is a non-metacyclic exact bicyclic $2$-group of type $(2^e,2^f)$
satisfying the following hypothesis.

\begin{hypothesis}\label{hypo}
$G=\l b\r\l w\r$ is non-metacyclic, where $o(b)=2^e$ and $o(w)=2^f$, $e\geq f\geq 2$
such that $\l b\r\cap\l w\r=1$.
\end{hypothesis}

The following lemma shows that the {\it exponent} of $G$, namely the highest order of an element of $G$, is $2^e$.
Its proof is given in~\cite[Lemma~2.7]{Fan-2022}.

\begin{lemma}\label{exponent}
The exponent of a group \(G\) satisfying Hypothesis~\ref{hypo} is \(2^e\).
\end{lemma}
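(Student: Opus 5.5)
We must show that a group $G$ satisfying Hypothesis~\ref{hypo} has exponent $2^e$, where $2^e=o(b)\ge o(w)=2^f$. Since $b\in G$ has order $2^e$, the exponent is at least $2^e$. The whole task is the upper bound: every $g\in G$ satisfies $g^{2^e}=1$.

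We write $g=b^iw^j$ and argue by induction on $|G|$. The intended route is a power-commutator argument for exact products of two cyclic $2$-groups, as in the metacyclic case of~\cite{Fan-2022}. The steps are as follows.

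\emph{Step 1: a proper normal subgroup.} Since $G$ is a $2$-group, $\Phi(G)=G^2G'$, and $G/\Phi(G)$ is generated by the images of $b$ and $w$. Hence $H=\langle b^2\rangle\langle w^2\rangle\Phi(G)$-type subgroups are available. More useful is the standard fact that in a product $G=AB$ of cyclic $p$-groups (Huppert–Itô), one of the factors has a normal subgroup of index $p$ extending to a normal subgroup of $G$. Concretely, we seek a maximal subgroup $M$ with $M=\langle b^2\rangle\langle w\rangle$ or $M=\langle b\rangle\langle w^2\rangle$. Such an $M$ is again an exact bicyclic group, since the intersection stays trivial. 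By induction, $M$ has exponent $\max(o(b^2),o(w))$ or $\max(o(b),o(w^2))$, and either value is at most $2^e$. Groups of order at most $2^{e+1}$ containing an element of order $2^e$ are classified, so the base case is immediate.

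\emph{Step 2: elements outside $M$.} For $g\notin M$ we have $g^2\in M$, so $o(g)\le 2\exp(M)$. If $\exp(M)<2^e$, we are done. The hard case is $\exp(M)=2^e$ with some element $g^2$ of order $2^e$. To control this, we use the Hall–Petrescu formula:
\[
(xy)^{2^e}=x^{2^e}y^{2^e}c,
\]
where $c$ is a product of $2^{e-k}$-th powers of elements of $\gamma_{2^k}(G)$. We also use the fact that $G'$ sits in $\Phi(G)$ with $|G'|\le|G|/2^{e+1}$. Here $G/G'$ is abelian of exponent at most $2^e$, since it is a quotient of $\mathbb Z_{2^e}\times\mathbb Z_{2^f}$.

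\emph{Step 3: bounding the correction terms.} It suffices to show that $G'$ has exponent dividing $2^{f}$, or at least small enough that the correction terms vanish. Then $g^{2^e}\in (G')^{2^{e-1}}\cdots=1$ by the regular-like behaviour. We would prove this bound via the conjugation action: $\langle w\rangle$ permutes cosets of $\langle b\rangle$, and the core $\langle b\rangle_G$ has index bounded by $2^f$ in $\langle b\rangle$.

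\emph{Main obstacle.} Step 3 is the expected obstacle, because $2$-groups are not regular and the Hall–Petrescu terms need care. If this approach stalls, the fallback is to quote~\cite[Lemma~2.7]{Fan-2022} directly, as the statement permits.
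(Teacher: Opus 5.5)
The paper does not prove this lemma itself; it simply cites \cite[Lemma~2.7]{Fan-2022}. Your fallback therefore coincides with the paper. However, the self-contained argument in Steps~1--3 has a genuine gap.

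Step~2 only succeeds when the maximal subgroup you induct on has exponent $<2^e$. That happens for $M=\langle b^2\rangle\langle w\rangle$ when $e>f$. When $e=f$, both $\langle b\rangle\langle w^2\rangle$ and $\langle b^2\rangle\langle w\rangle$ contain an element of order $2^e$. This case occurs under Hypothesis~\ref{hypo}, e.g.\ for $G_0$ and for $G(\eta,t,\delta)$ with $n=m-1$. Since $\langle b^2\rangle\langle w\rangle$ can itself have type $(2^f,2^f)$, even the case $e>f$ depends on this unproved case through the induction.

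Step~3, which is meant to handle this case, is not an argument. To kill the Hall--Petrescu terms $c_i^{\binom{2^e}{i}}$ you would need bounds like $\exp\gamma_i(G)\mid 2^{e-\nu_2(i)}$, which you never establish. Moreover, the auxiliary facts you intend to use are false for groups satisfying the hypothesis:
\begin{itemize}
\item For $G(0,1,0)$ with $m=n=2$ one has $|G|=32$ and $(e,f)=(3,2)$, but $|G'|=4>|G|/2^{e+1}=2$.
\item For $G(0,2,0)$ with $(m,n)=(2,3)$ one has $(e,f)=(4,2)$, but $G'\cong\ZZ_8$, so $\exp(G')\nmid 2^f$.
\end{itemize}
Also, the induction must run over all products of two cyclic $2$-groups, since $M$ need not be non-metacyclic or have $f\ge 2$.

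The missing idea is to induct on $\Phi(G)$ instead of on a single maximal subgroup. Let $G=\langle b\rangle\langle w\rangle$ be any product of cyclic $2$-groups with $o(b)=2^e\ge o(w)=2^f$. If $G$ is cyclic, its subgroups form a chain, so $G=\langle b\rangle$ and the claim holds; assume $G$ is not cyclic.
\begin{itemize}
\item The images $\bar b,\bar w$ form a basis of $G/\Phi(G)\cong\ZZ_2^2$. Hence $M_1=\langle b\rangle\Phi(G)$ and $M_2=\langle w\rangle\Phi(G)$ are maximal subgroups with $M_1\cap M_2=\Phi(G)$.
\item Dedekind's law gives $M_1=\langle b\rangle(M_1\cap\langle w\rangle)=\langle b\rangle\langle w^2\rangle$, because $w^2\in\Phi(G)\le M_1$ and $w\notin M_1$. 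No appeal to Huppert--It\^o is needed.
\item Applying Dedekind's law again, $\Phi(G)=M_2\cap\langle b\rangle\langle w^2\rangle=(M_2\cap\langle b\rangle)\langle w^2\rangle=\langle b^2\rangle\langle w^2\rangle$. This is a product of cyclic groups of orders $2^{e-1}\ge 2^{f-1}$.
\item By induction, $\exp\Phi(G)=2^{e-1}$. Since $g^2\in\Phi(G)$ for every $g\in G$, we get $g^{2^e}=1$.
\end{itemize}
This handles $e=f$ and $e>f$ uniformly and makes the Hall--Petrescu analysis unnecessary.
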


For the case $G'$ cyclic considered in this paper, Lemma~\ref{lem:cyclic-commutator} yields two possible
families of groups. The first is treated in Section~3, where its structure under an exact bicyclic factorization
is determined. The second is the parameterized family $G(\eta,t,\delta)$, depending on $m,n,\eta,t,\delta$.
Its isomorphism classes, exact bicyclic factorizations, and automorphism-group
orders are treated in Sections~4.1, 4.2, and 4.3, respectively.

\vskip0.2in

\section{The case $G'\cong\ZZ_2$}

By Lemma~\ref{lem:cyclic-commutator}, the automorphism group in this case has the presentation
\[
G=\l x,y\mid x^{2^n}=y^2=z^2=1,\ [x,y]=z,\ [x,z]=[y,z]=1\r
\]
for some $n\geq 2$. We shall show that the existence of an exact bicyclic factorization of type $(2^e,2^f)$
forces $n=2$, so that $G\cong G_0$, where $G_0$ is as stated in Theorem~\ref{main-1}(1). For the present family,
\(|G|=2^{n+2}\); moreover, $G'=\l z\r$ is a minimal cyclic subgroup of $G$, and $G=\l x\r\l xy\r$.
Thus every element of $G$ can be written as $x^i(xy)^j$ for some integers $i,j$.

Let $$\Omega=\{(b,w)=(x^i(xy)^j,x^k(xy)^{\ell})|o(b)=2^e, o(w)=2^f, \l b,w\r=G, \l b\r\cap\l w\r=1\}.$$
Then, by Proposition~\ref{fundamental}, each generating pair $(b,w)\in \Omega$ gives rise to a regular dessin $\calD(G,b,w)$.
Moreover, by Lemma~\ref{iso-criterion} and Lemma~\ref{semi-regular}, the number of non-isomorphic regular dessins of $\K_{2^e,2^f}$ associated with $G $ is equal to $\frac{|\Omega|}{|\Aut(G)|}$.
We therefore need to determine both $|\Omega|$ and $|\Aut(G)|$.\\

\vskip0.2in
\subsection{Exact bicyclic generating pairs}~~~~~~~~~~~~~
\vskip0.15in
Since every element of \(G\) is of the form \(x^i(xy)^j\), every pair in \(\Omega\)
has the form \((x^i(xy)^j,x^k(xy)^\ell)\), and determining \(|\Omega|\) is equivalent to determining all
admissible quadruples \((i,j,k,\ell)\). To this end, we first recall a criterion for two elements to generate
a \(2\)-generated \(2\)-group.
\begin{lemma}\label{lem:generating-matrix}
Let $G=\l b, w\r=\l x,y\r$ be a $2$-generated $2$-group. If
$b=y^ix^j$ and $w=y^kx^\ell$ for some integers \(i,j,k,\ell\), then
$$\left|
  \begin{array}{cc}
    i& j  \\
    k & \ell  \\
    \end{array}
\right|\not\equiv 0\ (\mbox{mod}~2).  \eqno(3.1)$$
\end{lemma}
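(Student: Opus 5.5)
We pass to the Frattini quotient and use the Burnside basis theorem. Since \(G\) is a \(2\)-generated \(2\)-group, \(G/\Phi(G)\cong\ZZ_2\times\ZZ_2\), which we view as the vector space \(\mathbb F_2^2\). We have \(\Phi(G)=G'G^2\), and every commutator and every square lies in \(\Phi(G)\), so the quotient map \(\pi\colon G\to G/\Phi(G)\) is a homomorphism onto an elementary abelian group. Because \(G=\l x,y\r\) with \(d(G)=2\), the images \(\bar x=\pi(x)\) and \(\bar y=\pi(y)\) form a basis of \(\mathbb F_2^2\). In additive notation this gives
\[
\pi(b)=i\,\bar y+j\,\bar x,\qquad \pi(w)=k\,\bar y+\ell\,\bar x ,
\]
with coefficients read modulo \(2\).

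Next, since \(G=\l b,w\r\), the images \(\pi(b)\) and \(\pi(w)\) generate \(G/\Phi(G)\). Conversely, if they did not span \(\mathbb F_2^2\), then \(\l b,w\r\Phi(G)\) would be a proper subgroup of \(G\). By the defining property of the Frattini subgroup (its elements are non-generators), \(\l b,w\r\) would then itself be proper, which is a contradiction. Two vectors span \(\mathbb F_2^2\) exactly when they are linearly independent.

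Finally, two vectors in \(\mathbb F_2^2\) are linearly independent exactly when the determinant of their coordinate matrix with respect to the basis \(\bar y,\bar x\) is nonzero in \(\mathbb F_2\). That determinant is \(i\ell-jk\bmod 2\), so \(i\ell-jk\not\equiv0\pmod 2\), which is (3.1).

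There is no real obstacle. The only points to check are that \(\pi\) turns the products \(y^ix^j\) into linear combinations, which holds because the quotient is abelian, and that reduction modulo \(2\) is well defined, which holds because \(\bar x\) and \(\bar y\) have order \(2\). As a remark, the argument also gives the converse: if (3.1) holds, then \(\pi(b)\) and \(\pi(w)\) span \(G/\Phi(G)\), and so \(\l b,w\r=G\) by the Burnside basis theorem.
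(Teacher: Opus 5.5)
Your proof is correct and is essentially the paper's argument: pass to $G/\Phi(G)\cong\ZZ_2\times\ZZ_2$, regard $\{\bar x,\bar y\}$ and $\{\bar b,\bar w\}$ as two bases of $\mathbb{F}_2^2$, and conclude that the coefficient matrix is nonsingular modulo $2$. One small point: your second paragraph does not need the non-generator property of $\Phi(G)$, because $\langle b,w\rangle\le\langle b,w\rangle\Phi(G)$ already makes that direction trivial; the property is only needed for the converse you mention at the end.
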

\proof
Let $\overline{G}=G/\Phi(G)$, and for $g\in G$, denote by $\overline{g}$ its image in $\overline{G}$.
Then
$$\left\{\begin{array}{ll}
\overline{b}=\overline{y}^i\overline{x}^j \\
\overline{w}=\overline{y}^k\overline{x}^\ell
\end{array} \right. $$
Since $\l \ov b, \ov w\r=\l \ov x,\ov y\r=\ov G\cong\ZZ_2\times\ZZ_2$,
we may view $\ov G$ as a $2$-dimensional vector space over $\mathbb{F}_2$.
Then both $\{\ov b,\ov w\}$ and $\{\ov x,\ov y\}$ are bases of this space.
The transition matrix between them is
$$\left[
  \begin{array}{cc}
    i& j  \\
    k & \ell  \\
    \end{array}
\right],$$
which must be non-singular. Hence its determinant is non-zero modulo $2$, giving the desired conclusion. \qed

We now apply this criterion to \(G\) and determine the admissible values of $i,j,k,\ell$.

\begin{lemma}\label{lem:G0-generating-pairs}
Let $G$, as above, be the automorphism group of a regular dessin whose underlying graph is $\K_{2^e,2^f}$,
where $n\geq2$ and $e\geq f\geq 2$.
Then $n=2$ and
\[
\Omega=\bigl\{(x^i(xy)^j,x^k(xy)^{\ell}):
1\leq i,j,k,\ell\leq4,\ k\equiv j\pmod{2},\ i\equiv\ell\equiv j+1\pmod{2}\bigr\},
\]
which has cardinality $|\Omega|=32$.
\end{lemma}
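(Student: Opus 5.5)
The plan is to combine the order count with Lemma~\ref{exponent} to pin down $(e,f)$, then show that for $n\ge3$ every candidate pair has $\l b\r\cap\l w\r\ne1$, and finally enumerate $\Omega$ directly when $n=2$. Throughout, $G'=\l z\r$ is central of order $2$, so $G$ has class $2$ and $\Z(G)\supseteq\l x^2,z\r$. Every element can be written $g=x^iy^sz^c$. The class-$2$ power formula gives
\[
(x^iy^s)^k=x^{ik}y^{sk}z^{\,is\binom{k}{2}}.
\]
Since $|G|=2^{n+2}$ and the factorization is exact, we get $e+f=n+2$. The exponent of $G$ is $2^n$: $x$ has order $2^n$, and by the power formula every element has order dividing $2^n$. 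Lemma~\ref{exponent} then gives $e=n$, hence $f=2$.

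Next I would show that $n\ge3$ is impossible. Because $b,w$ generate $G$, criterion (3.1) applied in $G/\Phi(G)$ forces the $x$-exponent of $b$ or of $w$ to be odd. An element $x^iy^sz^c$ with $i$ even has order at most $2^{n-1}$, so the element of order $2^n$, namely $b$, must have $i$ odd. For $n\ge3$ the number $\binom{2^{n-1}}{2}$ is even, so $b^{2^{n-1}}=x^{i2^{n-1}}=x^{2^{n-1}}$. Since $b$ has odd $x$-exponent, (3.1) forces the $y$-exponent of $w=x^ky^sz^c$ to be odd. Then $w^2=x^{2k}z^{k}$.
\begin{itemize}
\item[$\bullet$] If $k$ is odd, then $w^2=x^{2k}z$ has order $2^{n-1}>2$, which contradicts $o(w)=4$.
\item[$\bullet$] If $k$ is even, then $w^2=x^{2k}$ has order $2$, which forces $2k\equiv2^{n-1}\pmod{2^n}$. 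Hence $w^2=x^{2^{n-1}}=b^{2^{n-1}}$, so $\l b\r\cap\l w\r\neq1$, a contradiction.
\end{itemize}
Therefore $n=2$, $e=f=2$, and $G=G_0$ has order $16$.

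For $n=2$, I write $b=x^i(xy)^j$ and $w=x^k(xy)^\ell$, using the normal form $G=\l x\r\l xy\r$ with exponents taken modulo $4$. Note that $(xy)^2=x^2z$ has order $2$, so $xy$ has order $4$. I would compute the squares explicitly:
\[
(x^i(xy)^j)^2=x^{2i}(x^2z)^{j}z^{ij}=x^{2(i+j)}z^{j(i+1)}.
\]
From this:
\begin{itemize}
\item[$\bullet$] Order $4$ holds exactly when $i+j$ is odd, or when $i,j$ are both even with $j\equiv2\pmod 4$. The latter lies in $\Phi(G)$ and is excluded by generation.
\item[$\bullet$] The generating condition (3.1) in the basis $\{\ov x,\ov{xy}\}$ requires $i\ell-jk$ to be odd.
\item[$\bullet$] The trivial-intersection condition amounts to $b^2\neq w^2$, since every subgroup of order $>1$ of a cyclic group of order $4$ contains its unique involution.
\end{itemize}
Working through the parity cases should yield exactly: $i+j$ odd, $k+\ell$ odd, $i\ell-jk$ odd, and $b^2\ne w^2$. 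These combine to $k\equiv j$ and $i\equiv\ell\equiv j+1\pmod 2$, the $b^2\ne w^2$ condition being automatic in that configuration. The count is then $2$ choices of the parity of $j$ times $2^4$ choices of the residues modulo $4$ within the fixed parities, giving $32$.

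The main obstacle I expect is this last case analysis for $n=2$. The $z$-exponents in the squares must be tracked carefully to confirm that the intersection condition imposes nothing beyond the parity conditions and to rule out the two mixed parity patterns. I would first tabulate $b^2$ as a function of $(i\bmod 2,\, j\bmod 4)$ to make this check mechanical.
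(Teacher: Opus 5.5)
Your proof is correct and follows essentially the same route as the paper. The order count with Lemma~\ref{exponent} gives $(e,f)=(n,2)$. For $n\ge 3$ you reach the same contradiction $b^{2^{n-1}}=x^{2^{n-1}}=w^2$, working in the normal form $x^iy^sz^c$ with the class-$2$ power formula instead of the paper's formulas (3.2)--(3.5). For $n=2$ you finish with the same parity enumeration, and you make explicit the use of (3.1) that the paper leaves implicit.

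Two small slips need repair.

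\begin{enumerate}
\item Condition (3.1) by itself does not force the $y$-exponent of $w$ to be odd: the pair $b=xy$, $w=x$ satisfies (3.1). You should first observe that $k$ is even, because an odd $x$-exponent gives order $2^n>4$. Only then does (3.1) force the $y$-exponent of $w$ to be odd.
\item The elements $x^i(xy)^j$ with $i,j$ both even never have order $4$: your own formula gives $b^2=x^{2(i+j)}z^{j(i+1)}=1$ for them. So order $4$ holds exactly when $i+j$ is odd. This is harmless, since you discard those elements anyway.
\end{enumerate}
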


\proof Let $\calD$ be a regular dessin with underlying graph $\K_{2^e,2^f}$ and automorphism group $G$ as above.
By Proposition~\ref{fundamental},  $G=\l b\r\l w\r\cong \ZZ_{2^e}\ZZ_{2^f}$ is a non-metacyclic exact bicyclic $2$-group, where $e\geq f\geq 2$.
Then $2^{e+f}=|G|=2^{n+2}$,  so $e+f=n+2$.  According to Lemma~\ref{exponent}, we have $n\leq e$, and thus $f\leq 2$.
This implies that $$(e,f)=(n,2).$$

Now \(G=\langle x\rangle\langle xy\rangle\), where \(xy\) has order \(2^n\) because
 $$(xy)^{2^{n-1}}=(x^2z)^{2^{n-2}}=x^{2^{n-1}}z^{2^{n-2}}\neq 1~\mbox{and}~(xy)^{2^n}=(x^{2^{n-1}}z^{2^{n-2}})^2=1.$$
Let $(b,w)=(x^i(xy)^j,x^k(xy)^{\ell})$ be an exact bicyclic generating pair of type $(2^n, 2^2)$,
where $i,j,k,\ell$ are integers.
Applying Lemma~\ref{lem:generating-matrix} with generators $x$ and $xy$, we see that $i,j,k,\ell$ satisfy condition (3.1).
Thus $\{i,j\}$ and $\{k,\ell\}$ cannot both be even.
A direct calculation gives
\[
(xy)^j=\begin{cases}
x^jy & \text{if } j\equiv1\pmod4,\\
x^jz & \text{if } j\equiv2\pmod4,\\
x^jyz & \text{if } j\equiv3\pmod4,\\
x^j & \text{if } j\equiv0\pmod4.
\end{cases}\eqno(3.2)
\]
and
\[
(x^i(xy)^j)^2=\begin{cases}
x^{2(i+j)}z^{i+j} & \text{if } j \text{ is odd},\\
x^{2(i+j)} & \text{if } j \text{ is even}.
\end{cases}\eqno(3.3)
\]

Iterating, we obtain
\[
1\ne b^{2^{n-1}}=(x^i(xy)^j)^{2^{n-1}}
=\begin{cases}
x^{2^{n-1}(i+j)}z^{2^{n-2}(i+j)} & \text{if } j \text{ is odd},\\
x^{2^{n-1}(i+j)} & \text{if } j \text{ is even}.
\end{cases}\eqno(3.4)
\]

\[
1\ne w^2=(x^k(xy)^\ell)^2=\begin{cases}
x^{2(k+\ell)}z^{k+\ell} & \text{if } \ell \text{ is odd},\\
x^{2(k+\ell)} & \text{if } \ell \text{ is even}.
\end{cases}\eqno(3.5)
\]
and $$1= w^{2^2}=(x^k(xy)^{\ell})^{2^2}=x^{2^2(k+\ell)},$$
which implies that $i+j$ is odd and $2^{n-2}||(k+\ell)$.

Suppose that $n>2$. Then by Eqs.~(3.4)--(3.5), we have
$$b^{2^{n-1}}=x^{2^{n-1}}=w^2,$$ which contradicts the condition $\l b\r\cap\l w\r=1$. Thus, $n=e=f=2$.
In this case, $G$ has order $16$, and its elements are represented uniquely as $x^i(xy)^j$ with $1\leq i,j\leq4$.
Moreover, $i+j$ and $k+\ell$ are both odd,
and so the set of exact bicyclic generating pairs of type $(2^2,2^2)$ is
\[
\Omega=\bigl\{(x^i(xy)^j,x^k(xy)^{\ell}):
1\leq i,j,k,\ell\leq4,\ k\equiv j\pmod{2},\ i\equiv\ell\equiv j+1\pmod{2}\bigr\}.
\]
This set has cardinality $|\Omega|=32$.\qed

\vskip0.2in
\subsection{The automorphisms of \(G_0\)}~~~~~~~~~~~~~~~
\vskip0.15in
According to Lemma~\ref{lem:G0-generating-pairs}, if $G$ is the automorphism group of a regular dessin
in the present case, then $G\cong G_0$, where \(G_0\) is as stated in Theorem~\ref{main-1}(1).
To complete the classification, we determine the order of $\Aut(G_0)$.

\begin{lemma}\label{lem:aut-G0}
Let $G_0=\l x,y~|~x^{2^2}=y^2=z^2=1,[x,y]=z, [x,z]=[y,z]=1 \r$. Then $$|\Aut(G_0)|=32.$$
\end{lemma}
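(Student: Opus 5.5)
Since $G_0$ is $2$-generated by $x,y$, every automorphism is determined by the images $(x^\sigma,y^\sigma)$. Conversely, a pair $(X,Y)$ defines an automorphism exactly when it generates $G_0$ and satisfies the defining relations:
\[
X^4=Y^2=1,\qquad [X,Y]^2=1,\qquad [X,Y]\ \text{central}.
\]
Surjectivity then gives bijectivity, since $G_0$ is finite. So $|\Aut(G_0)|$ is the number of such pairs, and I will count them directly.

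\emph{Step 1: structure of $G_0$.} $G_0$ has order $16$ and is minimal non-abelian with $G_0'=\l z\r$. Also $\Phi(G_0)=\Z(G_0)=\l x^2,z\r\cong\ZZ_2\times\ZZ_2$, because $G_0/\Z(G_0)$ has order $4$ ($x^2$ and $z$ are central, $x$ and $y$ are not). Every element can be written uniquely as $x^iy^jz^k$ with $0\le i<4$ and $j,k\in\{0,1\}$. Squaring in class $2$ gives
\[
(x^iy^jz^k)^2=x^{2i}y^{2j}[y^j,x^i]=x^{2i}z^{ij}.
\]
Hence:
\begin{itemize}
\item the elements outside $\Phi(G_0)$ with $i$ odd have order $4$; there are $8$ of them;
\item the elements $y^{j}x^{2i'}z^k$ with $j=1$ and $i$ even have order $2$; there are $4$ of them, namely $y,\ yz,\ yx^2,\ yx^2z$;
\item the remaining $4$ elements are in $\Phi(G_0)$.
\end{itemize}

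\emph{Step 2: count the admissible pairs.} The relations hold automatically for any pair: the commutator of two elements lies in $\l z\r$, which is central of order $2$. So the only conditions are $X^4=1$, $Y^2=1$, and $\l X,Y\r=G_0$. Since the exponent is $4$, $X^4=1$ always holds. By the Frattini argument (as in Lemma~\ref{lem:generating-matrix}), generation means that $\ov X,\ov Y$ form a basis of $G_0/\Phi(G_0)\cong\ZZ_2^2$. This requires $X,Y\notin\Phi(G_0)$ and $\ov X\ne\ov Y$.

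\emph{Choosing $Y$.} $Y$ must be an involution outside $\Phi(G_0)$. By Step~1 these are exactly the $4$ elements in the coset $y\Phi(G_0)$. Note that the coset $xy\Phi(G_0)$ consists of elements of order $4$ (there $i=1$ is odd), so it contributes no involutions.

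\emph{Choosing $X$.} $X$ must lie in one of the two cosets $x\Phi(G_0)$ or $xy\Phi(G_0)$, giving $8$ choices.

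The total is $4\cdot 8=32$, which proves $|\Aut(G_0)|=32$.

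\emph{Main obstacle.} The step needing care is verifying that every such assignment really extends to a homomorphism, i.e.\ checking the relations $[X,z']=1$ for $z'=[X,Y]$. This holds because $z'\in\l z\r\le\Z(G_0)$. As a consistency check, the result also agrees with Lemma~\ref{semi-regular}: $|\Omega|/|\Aut(G_0)|=32/32=1$, matching Theorem~\ref{main-2}(1).
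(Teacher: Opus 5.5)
Your proof is correct and follows essentially the same strategy as the paper. Both count the pairs of images $(X,Y)$ that generate $G_0$ (the Frattini criterion of Lemma~\ref{lem:generating-matrix}) with $Y$ an involution; the remaining relations hold automatically because every commutator lies in the central subgroup $\l z\r$ of order $2$, giving $8\cdot 4=32$. The only difference is bookkeeping: you sort elements into cosets of $\Phi(G_0)=\l x^2,z\r$ using the normal form $x^iy^jz^k$ and the identity $(ab)^2=a^2b^2[b,a]$, whereas the paper writes the images as $x^i(xy)^j$, extracts parity conditions on the exponents from Eqs.~(3.2)--(3.4), and checks $\sigma(z)=z$ by direct computation.
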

\proof Since $G_0=\l x,y\r=\l x\r\l xy\r$, each automorphism $\sigma\in\Aut(G_0)$ can be written in the following form:
\[\sigma:\left\{
\begin{array}{ll}
x\rightarrow x^i(xy)^j           \\
y\rightarrow x^k(xy)^\ell
\end{array}, \right.\]
where the parameters~$1\leq i,j,k,\ell \leq 2^2$ satisfy Eq.~(3.1).

Since $o(\sigma(x))=2^2$ and $o(\sigma(y))=2$, it follows from
Eqs.~(3.3) and (3.4) that $i+j,k,\ell$ are all odd; that is, \(i\) and
\(j\) have opposite parity. Furthermore, by Eq.~(3.2), we have
\((xy)^{\ell}=x^{\ell}yz^m\) and \((xy)^j=x^jy^sz^t\), where
$m,s,t=0$ or $1$. In particular, $s=1$ if $j$ is odd and $s=0$ if $j$ is even.
Then
 \[\sigma(x)\cdot\sigma(y)=x^i(xy)^{j+\ell}(x^k)^{(xy)^{\ell}}=x^i(xy)^{j+\ell}(x^k)^{x^\ell yz^m}=x^i(xy)^{j+\ell}(x^y)^k=x^i(xy)^{j+\ell}x^kz\]
and
\[\sigma(y)\cdot\sigma(x)=x^k(xy)^{j+\ell}(x^i)^{(xy)^{j}}=x^k(xy)^{j+\ell}(x^i)^{x^jy^sz^t}=x^k(xy)^{j+\ell}(x^i)^{y^s}=x^k(xy)^{j+\ell}x^i,\]
Hence $$\sigma(z)=[\sigma(x),\sigma(y)]=(x^k(xy)^{j+\ell}x^i)^{-1}x^i(xy)^{j+\ell}x^kz=z.$$
As $z\in \Z(G)$, the relations $[\sigma(x),\sigma(z)]=[\sigma(y),\sigma(z)]=1$ hold automatically.
Therefore, the order of $\Aut(G_0)$ equals the product of all admissible choices of $i,j,k,\ell$, i.e., $$|\Aut(G_0)|=(2\cdot2+2\cdot 2)\cdot2\cdot2=32.$$\qed

\subsection{Classification and enumeration of regular dessins}~~~~~~~~~
\vskip0.1in
Combining Lemma~\ref{lem:G0-generating-pairs} and Lemma~\ref{lem:aut-G0}, we obtain the following result.
\begin{lemma}\label{lem:count-G0}
The complete bipartite graph $\K_{2^e,2^f}$ has a unique regular dessin with automorphism group $G_0$;
moreover, this dessin has genus $1$.
\end{lemma}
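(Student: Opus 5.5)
The plan is to combine the two counting results already proved for \(G_0\) and then compute the genus from the formula in Section~2. By Lemma~\ref{lem:G0-generating-pairs}, any regular dessin of \(\K_{2^e,2^f}\) with automorphism group \(G_0\) forces \(e=f=2\), and the set \(\Omega\) of exact bicyclic generating pairs of type \((2^2,2^2)\) has \(|\Omega|=32\). By Lemma~\ref{lem:aut-G0}, \(|\Aut(G_0)|=32\). By Lemma~\ref{semi-regular}, \(\Aut(G_0)\) acts semi-regularly on \(\Omega\), so the number of orbits is \(|\Omega|/|\Aut(G_0)|=1\). Then Proposition~\ref{fundamental} and Lemma~\ref{iso-criterion} show that there is exactly one regular dessin up to isomorphism. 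This gives uniqueness; existence follows because \(\Omega\neq\emptyset\).

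For the genus, I pick a convenient representative from the description of \(\Omega\), say \(i=2\), \(j=1\), \(k=1\), \(\ell=2\). This gives \(b=x^2(xy)\) and \(w=x(xy)^2=x^3z\), using (3.2). I then need \(o(bw)\). Using (3.2), \(xy\) commutes with \(x^2\) modulo the central element \(z\); in fact \(x^2\) is central, since \([x^2,y]=z^2=1\). So
\[
bw=x^2\cdot xy\cdot x^3z=x^5\,y^{x^3}z=x\cdot yz\cdot z=xy,
\]
because \(y^{x^3}=y[y,x^3]=yz\). Hence \(o(bw)=4\) (since \(n=2\), \(xy\) has order \(4\) by the computation in Lemma~\ref{lem:G0-generating-pairs}). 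The genus formula then gives
\[
\mathbf g=1+\tfrac{16}{2}\bigl(1-\tfrac14-\tfrac14-\tfrac14\bigr)=1+8\cdot\tfrac14=3.
\]
This contradicts the claimed genus \(1\). So the face length convention must be re-examined: the paper states the face length is \(2|bw|\), so the number of faces is \(|G|/|bw|\). With that convention,
\[
\mathbf g=1+\tfrac12(16-8-4)=3
\]
again, unless \(o(bw)=2\).

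The main obstacle is therefore the computation of \(o(bw)\), which must be checked carefully, since the genus-\(1\) claim requires \(o(bw)=2\). Then \(|F|=8\) and \(\mathbf g=1+\tfrac12(16-8-8)=1\). I would recompute \(bw\) for a representative pair directly in \(G_0\) with the relation \(yx=xyz\). I would also use the fact that all pairs in \(\Omega\) are \(\Aut(G_0)\)-equivalent, so \(o(bw)\) is independent of the chosen pair. I would choose a pair where the product visibly has order \(2\), for instance one with \(bw\in\langle x^2,y,z\rangle\) (an elementary abelian subgroup of \(G_0\)), and verify that such a pair satisfies the parity conditions defining \(\Omega\). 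Since the orbit is unique, one such pair determines the genus, giving \(\mathbf g=1\).
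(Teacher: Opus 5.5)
Your uniqueness argument is correct and is the paper's own: $|\Omega|=32=|\Aut(G_0)|$, and the action is semi-regular, so there is a single orbit. The genus half, however, is not established. You end up asserting $\mathbf g=3$ and only sketch a repair. The apparent contradiction does not come from the statement or from the face-length convention; it comes from an arithmetic slip. In
\[
bw=x^2\cdot xy\cdot x^3z
\]
the total power of $x$ is $x^{2+1+3}=x^6=x^2$, not $x^5$. Your conjugation $y^{x^3}=yz$ is correct, so this gives $bw=x^2\cdot yz\cdot z=x^2y$. Since $x^2$ is central and $y^2=1$, we get $(x^2y)^2=x^4=1$. So $o(bw)=2$ already for your representative $(i,j,k,\ell)=(2,1,1,2)$, which does lie in $\Omega$. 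Then $|F|=8$ and $\mathbf g=1+8\bigl(1-\tfrac14-\tfrac14-\tfrac12\bigr)=1$. This is exactly what the paper does, for a general representative with $j,k$ odd and $i,\ell$ even. There $bw=x^{i+j+k+\ell}yz^{s+t+1}$ with $i+j+k+\ell$ even, which is an involution.

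Your proposed fix, choosing another pair in $\Omega$ whose product visibly has order $2$, cannot work within your own reasoning. You correctly note that $o(bw)$ is constant on the single $\Aut(G_0)$-orbit. So if one pair genuinely gave $o(bw)=4$, every pair would, and the genus really would be $3$. The missing step is locating the computational error, not changing the representative. Once you correct $x^5$ to $x^6=x^2$, your argument is complete and matches the paper's.
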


\proof Let $\calD$ be a regular dessin with underlying graph $\K_{2^e,2^f}$ and automorphism group $G_0$.
By Lemma~\ref{lem:G0-generating-pairs} and Lemma~\ref{lem:aut-G0}, we have $n=2$ and $|\Aut(G_0)|=|\Omega|$.
Hence, there exists a unique regular dessin corresponding to $G_0$, which, up to isomorphism, is given by
$$\calD=\calD(G,b,w)=\calD(G,x^i(xy)^j,x^k(xy)^{\ell}),$$ where $j$ and $k$ are odd, $i$ and $\ell$ are both even.
By Eq.~(3.2), we obtain $(xy)^j=x^jyz^s$ and $(xy)^{\ell}=x^{\ell}z^t$, where $s,t=0$ or $1$.
Thus, $b^2=(x^{i+j}yz^s)^2=x^2z,~w^2=(x^{k+\ell}z^t)^2=x^2$ and $$bw=x^i(xy)^j\cdot x^k(xy)^{\ell}=x^{i+j}yz^s\cdot x^{k+\ell}z^t=x^{i+j+k+\ell}yz^{s+t+1}.$$
It follows that $|bw|=2$. Hence the genus of $\calD$ equals
\[{\bf g}=1+\frac{|G|}{2}(1-\frac{1}{|b|}-\frac{1}{|w|}-\frac{1}{|bw|})=1.\]
This completes the proof.\qed

\vskip0.2in
\section{The case $G'\cong\ZZ_{2^n}$ with $n>1$}
\vskip0.1in

In this section we study the regular dessins with underlying graph $\K_{2^e,2^f}$ whose automorphism group
\(G\) satisfies $G'\cong\ZZ_{2^n}$ with \(n>1\). By Lemma~\ref{lem:cyclic-commutator}, $G=NZ$ for some
$m,n>1$, where
\[
N=\l v,x\mid v^{2^n}=1,\ z=v^{2^{n-1}},\ x^2\in\l z\r,\ v^x=v^{-1}\r
\quad\text{and}\quad
Z=\l a\r.
\]
Here $N$ is normal and dihedral or generalized quaternion, and
$Z\cap N\leq\langle z\rangle=\Z(N)$ with $|Z/(Z\cap N)|=2^m$.
The defining relations include $[a,x]=v$, $[v,a^{2^{m-1}}]=1$ and $v^a=v^{-1+4t}$,
where $t\in\mathbb Z$.
Since \(x^2\in\langle z\rangle\), \(a^{2^m}\in Z\cap N\leq\langle z\rangle\), and
\(z\) has order \(2\), each of \(x^2\) and \(a^{2^m}\) is either \(1\) or \(z\).
We therefore define the unique parameters \(\eta,\delta\in\{0,1\}\) by
\[
x^2=z^\delta\quad\text{and}\quad a^{2^m}=z^\eta.
\]
Thus \(\delta\) and \(\eta\) record whether \(x^2\) and \(a^{2^m}\) are equal to \(1\) or \(z\).
This gives precisely the parameterized group \(G(\eta,t,\delta)\) introduced before Theorem~\ref{main-1}.
The order, derived subgroup and Frattini subgroup of \(G\) are given by
\[
|G|=2^{m+n+1},~ G'=\langle v\rangle\cong\ZZ_{2^n}
~\text{ and }~ \Phi(G)=G'\langle a^2\rangle.
\]
Moreover, $G=\langle a\rangle\langle ax\rangle$ is a product of two cyclic subgroups.

Since $v$ has order $2^n$, the relation $v^a=v^{-1+4t}$ depends on
$t$ only modulo $2^{n-2}$. We therefore distinguish the two cases
\[
  t\equiv0\pmod{2^{n-2}}
  \qquad\text{and}\qquad
  t\not\equiv0\pmod{2^{n-2}}.
\]
Let \(\nu_2\) denote the \(2\)-adic valuation, namely the exponent of \(2\) in
the prime factorization of a nonzero integer. We define the normalized exponent
\[
  \lambda_0=
  \begin{cases}
    n-2 & t\equiv0\pmod{2^{n-2}},\\
    \nu_2(t) & t\not\equiv0\pmod{2^{n-2}}.
  \end{cases}
\]
Accordingly, $v^a=v^{-1}$ precisely when \(t\equiv0\pmod{2^{n-2}}\). In this
case we choose the representative \(t=2^{n-2}\), for which
\(\lambda_0=n-2\). Otherwise \(\lambda_0=\nu_2(t)\). The bound
\(\lambda_0\le n-2\) is a choice of representative, not a restriction on the
\(2\)-adic valuation of \(t\).

Thus, with $m,n$ fixed, the group depends on the parameters $\eta,t,\delta$.
We first determine the necessary relations among the parameters.

\begin{lemma}\label{parameter-relation}
Let $\lambda_0$ be the representative exponent defined above.
If $\lambda_0\le n-3$, then
\[\lambda_0+3\leq n \leq m+1+\lambda_0.\]
\end{lemma}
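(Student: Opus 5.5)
The first inequality $\lambda_0+3\le n$ is just a rewriting of the hypothesis $\lambda_0\le n-3$, so all the work is in the upper bound $n\le m+1+\lambda_0$. The plan is to extract it from the defining relation $[v,a^{2^{m-1}}]=1$ by computing the $2$-adic valuation of $r^{2^{m-1}}-1$, where $r=-1+4t$ is the exponent in $v^a=v^{r}$.

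\textbf{Reduction to a congruence.} Since $\lambda_0\le n-3<n-2$, we are in the case $t\not\equiv0\pmod{2^{n-2}}$. There $\lambda_0=\nu_2(t)$ is the genuine $2$-adic valuation of $t$. It does not depend on the choice of $t$ modulo $2^{n-2}$, because it is smaller than $n-2$. Put $r=-1+4t$, so $v^a=v^r$. Iterating gives $v^{a^{k}}=v^{r^{k}}$. Because $o(v)=2^n$, the relation $[v,a^{2^{m-1}}]=1$ is equivalent to
\[
r^{2^{m-1}}\equiv 1\pmod{2^n},
\qquad\text{that is,}\qquad
\nu_2\bigl(r^{2^{m-1}}-1\bigr)\ge n .
\]

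\textbf{Valuation computation.} We compute $\nu_2(r^{2^{m-1}}-1)$ exactly. First,
\[
r^2=1-8t+16t^2=1+8t(2t-1).
\]
Since $2t-1$ is odd, $\nu_2(r^2-1)=3+\lambda_0\ge 3$.

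Next we use the elementary fact that if $s\equiv1\pmod 4$, then $s^2-1=(s-1)(s+1)$ with $\nu_2(s+1)=1$, so $\nu_2(s^2-1)=\nu_2(s-1)+1$. Applying this $m-2$ times, starting from $s=r^2$, gives
\[
\nu_2\bigl(r^{2^{m-1}}-1\bigr)=3+\lambda_0+(m-2)=m+1+\lambda_0 .
\]
This uses $m>1$, so that $2^{m-1}\ge2$ and the first squaring step is available.

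\textbf{Conclusion.} Combining the two steps gives $n\le m+1+\lambda_0$, which together with $\lambda_0+3\le n$ is the claim.

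The only point needing care, and the expected obstacle, is justifying that $\nu_2(t)=\lambda_0$ exactly rather than merely $\ge\lambda_0$. If the valuation were larger, we would only obtain a weaker bound. This is exactly where the hypothesis $\lambda_0\le n-3$ (equivalently $t\not\equiv0\pmod{2^{n-2}}$) enters: it makes the valuation of $t$ independent of the representative chosen modulo $2^{n-2}$.
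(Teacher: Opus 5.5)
Your proposal is correct and follows essentially the same route as the paper. Both arguments turn $[v,a^{2^{m-1}}]=1$ into $r^{2^{m-1}}\equiv1\pmod{2^n}$ and then compute $\nu_2\bigl(r^{2^{m-1}}-1\bigr)=m+1+\lambda_0$ from $\nu_2(r^2-1)=\lambda_0+3$. Your repeated-squaring step, where each squaring of some $s\equiv1\pmod 4$ raises the valuation by one, simply spells out the paper's ``direct binomial expansion.''
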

\proof
By hypothesis, \(n\geq\lambda_0+3\). Moreover, since \(\lambda_0\le n-3\),
the representative exponent satisfies \(\lambda_0=\nu_2(t)\). Write
$t=2^{\lambda_0}t_0$ with $t_0$ odd.

Now set \(r=-1+4t\). From $v^a=v^r$ and $[v,a^{2^{m-1}}]=1$, we obtain
\[
v^{r^{2^{m-1}}}=v^{a^{2^{m-1}}}=v.
\]
Since \(v\) has order $2^n$, it follows that
\[
r^{2^{m-1}}\equiv1\pmod{2^n}.
\]
Moreover,
\[
r^2=(-1+4t)^2=1-8t(1-2t).
\]
Because \(t=2^{\lambda_0}t_0\) with \(t_0\) odd and \(1-2t\) is odd, there is
an odd integer \(u\) such that \(r^2=1-2^{\lambda_0+3}u\). A direct binomial
expansion gives, for \(m>1\),
\[
\nu_2\!\left(r^{2^{m-1}}-1\right)
=(m-2)+(3+\lambda_0)
=m+1+\lambda_0.
\]
The congruence above then implies
\[
m+1+\lambda_0\geq n.
\]
Combining the two inequalities gives
\[
\lambda_0+3\leq n\leq m+1+\lambda_0.
\]\qed

\subsection{Isomorphism classes of $G(\eta,t,\delta)$}~~~~~~~~~~~~~~~~~~~~~~~~~~~~~~~
\vskip0.15in

In this subsection we determine the isomorphism classes of the groups
$G(\eta,t,\delta)$ with admissible parameters. This is necessary because different parameter
triples may define the same group, and the later count of generating-pair orbits and dessins
must be performed once for each group isomorphism class.

\begin{lemma}\label{lem:iso-t}
Let \(G=G(\eta,t,\delta)\) with $\eta,\delta\in\{0,1\}$, and let $\lambda_0$ be the
representative exponent defined above. Then
\[
  G(\eta,t,\delta)\cong G(\eta,2^{\lambda_0},\delta).
\]
where $\max\{0,n-m-1\}\le\lambda_0\le n-2.$

\end{lemma}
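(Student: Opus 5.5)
We prove the statement by an explicit change of generators. We replace $a$ by a suitable odd power $a'=a^s$ and $v$ by the corresponding commutator $v'=[a',x]$. We then check that $(v',a',x)$ satisfies the defining relations of $G(\eta,2^{\lambda_0},\delta)$ and conclude by von Dyck's theorem together with a comparison of orders.

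First we dispose of the trivial case. If $t\equiv0\pmod{2^{n-2}}$, then $-1+4t\equiv-1+4\cdot2^{n-2}\pmod{2^n}$, so the two presentations have identical relations and there is nothing to prove. In this case $\lambda_0=n-2\ge n-m-1$, because $m\ge1$.

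Now suppose $t\not\equiv0\pmod{2^{n-2}}$, and write $t=2^{\lambda_0}t_0$ with $t_0$ odd. The bounds $\max\{0,n-m-1\}\le\lambda_0\le n-2$ follow from Lemma~\ref{parameter-relation}, since $\lambda_0\le n-3$ in this case. Put $r=-1+4t$ and $r'=-1+2^{\lambda_0+2}$. Then $-r=1-2^{\lambda_0+2}t_0$ and $-r'=1-2^{\lambda_0+2}$. In $(\ZZ/2^n\ZZ)^\times$, each of these generates the cyclic subgroup $\{u: u\equiv1\pmod{2^{\lambda_0+2}}\}$. Hence there is an integer $s$ with $-r'\equiv(-r)^s\pmod{2^n}$. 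Since $-r$ and $-r'$ are both generators of this cyclic $2$-group, $s$ can be chosen odd, and then $r'\equiv r^s\pmod{2^n}$.

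Set $a'=a^s$. From $x^{-1}ax=a[a,x]=av$ we get
\[
[a^s,x]=a^{-s}(av)^s=v^{1+r+\cdots+r^{s-1}}.
\]
Define $v'=v^k$ with $k=1+r+\cdots+r^{s-1}$. Since $r$ is odd and $s$ is odd, $k$ is odd. Hence $\langle v'\rangle=\langle v\rangle$, $v'$ has order $2^n$, and $v'^{2^{n-1}}=z$. We now verify the relations:
\begin{itemize}
\item $v'^x=v'^{-1}$ and $v'^{a'}=v'^{r^s}=v'^{-1+4\cdot2^{\lambda_0}}$;
\item $[a',x]=v'$ by construction;
\item $[v',a'^{2^{m-1}}]=1$, because $a'^{2^{m-1}}$ is a power of $a^{2^{m-1}}$, which centralizes $v$;
\item $a'^{2^m}=(a^{2^m})^s=z^{\eta s}=z^\eta$, and $x^2=z^\delta$ is unchanged.
\end{itemize}
Since $s$ is odd and $a$ has $2$-power order, $\langle a'\rangle=\langle a\rangle$, so $a'$ and $x$ generate $G$. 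By von Dyck's theorem there is an epimorphism $G(\eta,2^{\lambda_0},\delta)\to G(\eta,t,\delta)$ sending the generators $v,a,x$ to $v',a',x$.

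The main point to watch is surjectivity combined with the order comparison. We use the fact, recorded before Theorem~\ref{main-1}, that every admissible $G(\eta,t,\delta)$ has order exactly $2^{m+n+1}$. Applying this to both groups, the epimorphism is a surjection between groups of the same order, hence an isomorphism. To be safe we also note that the relation $[v,a^{2^{m-1}}]=1$ is consistent for the new parameter, that is, $r'^{2^{m-1}}\equiv1\pmod{2^n}$. This holds because $\nu_2(r'^{2^{m-1}}-1)=m+1+\lambda_0\ge n$, by the computation in Lemma~\ref{parameter-relation}.
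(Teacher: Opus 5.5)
Your proof is correct and follows essentially the paper's route: replace $a$ by an odd power $a^q$ with $r^q\equiv 2^{\lambda_0+2}-1\pmod{2^n}$, observe that $[a^q,x]$ is an odd power of $v$, check the defining relations, and get the range of $\lambda_0$ from Lemma~\ref{parameter-relation}. The differences are small. You locate $q$ via the cyclic group of units $\equiv1\pmod{2^{\lambda_0+2}}$ rather than via $r=-5^\alpha$, which is the same idea. Your von Dyck plus order-count closure makes explicit a step the paper leaves implicit. It is cleanest to note that the presented group $G(\eta,2^{\lambda_0},\delta)$ has order at most $2^{m+n+1}$, since $\l v\r$ is normal of order at most $2^n$ with abelian quotient generated by images of $a,x$ of orders dividing $2^m$ and $2$, and that it surjects onto $G(\eta,t,\delta)$, so you need no a priori admissibility of the new parameter.
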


\proof
Let \(G=G(\eta,t,\delta)\). Since \(v=[a,x]\), the group is generated by \(a,x\), so
$G=\l a,x\r$. Put \(r=4t-1\).
First suppose that \(\lambda_0\le n-3\), so \(n\ge3\) and
\(\lambda_0=\nu_2(t)\). Put \(s=2^{\lambda_0+2}-1\). Both \(r\) and \(s\) are
units congruent to \(-1\pmod4\); in particular, \(r,s\in U_{2^n}\), where
\(U_{2^n}\) denotes the group of units modulo \(2^n\). Since
\(U_{2^n}=\{\pm5^m\}\), write
\(r=-5^\alpha\) and \(s=-5^\beta\). By the binomial theorem,
\(\nu_2(5^m-1)=2+\nu_2(m)\) for \(m\ge1\). Since
\[
\nu_2(r+1)=\nu_2(4t)=\lambda_0+2
\quad\text{and}\quad
\nu_2(s+1)=\nu_2(2^{\lambda_0+2})=\lambda_0+2,
\]
we obtain \(\nu_2(\alpha)=\nu_2(\beta)=\lambda_0\). Writing
\(\alpha=2^{\lambda_0}\alpha_0\) and \(\beta=2^{\lambda_0}\beta_0\)
with \(\alpha_0,\beta_0\) odd, the congruence
\[
\alpha_0q\equiv\beta_0\pmod{2^{n-\lambda_0-2}}
\]
has an odd solution \(q\). Consequently,
\(r^q=(-5^\alpha)^q=-5^{\alpha q}\equiv-5^\beta=s\pmod{2^n}\).
For the remaining case \(\lambda_0=n-2\), the representative is \(t=2^{n-2}\);
then \(r\equiv-1\pmod{2^n}\) and \(s=2^n-1\), so we take \(q=1\).
Choose such an odd \(q\) and put $h=a^q$. Since the order of \(a\) is a power of \(2\),
the odd power \(a^q\) also generates \(\langle a\rangle\). Hence
\[ G(\eta,t,\delta)=\l a,x\r=\l h,x\r. \]
Now we check the relations satisfied by $h$:
\[ h^{2^m}=(a^{2^m})^{q}=a^{2^m}=z^{\eta},~\ov{v}:=[h,x]=[a^{q},x]=v^{(r^q-1)/(r-1)}, \]
where $(r^q-1)/(r-1)$ is odd; hence $[\ov{v},h^{2^{m-1}}]=1$, $z=\ov{v}^{2^{n-1}}$ and $\ov{v}^x=\ov{v}^{-1}$. Then the last relation $v^a=v^r$ gives
\[ \ov{v}^{h}=(v^{(r^q-1)/(r-1)})^{a^{q}}=v^{r^q(r^q-1)/(r-1)}=\ov{v}^{r^q}, \]
which forces $\ov{v}^h=\ov{v}^{2^{\lambda_0+2}-1}$ because $r^q\equiv 2^{\lambda_0+2}-1\pmod{2^n}$. Thus
\begin{align*}
      G(\eta,t,\delta)=\l h,x~|~&[h,x]=\ov{v},~\ov{v}^{2^n}=1,~\ov{v}^{h}=\ov{v}^{2^{\lambda_0+2}-1},\\
      &\ov{v}^x=\ov{v}^{-1},~h^{2^m}=z^{\eta},~x^2=z^{\delta}  \r\cong G(\eta,2^{\lambda_0},\delta).
\end{align*}

It remains to verify the asserted range of \(\lambda_0\).
By its definition, \(0\le\lambda_0\le n-2\). The lower bound follows from
Lemma~\ref{parameter-relation} when \(\lambda_0\le n-3\), and is immediate
for the endpoint \(\lambda_0=n-2\). Hence
\[ \max\{0,n-m-1\}\le\lambda_0\le n-2. \]
This completes the proof.\qed

The preceding lemma shows that the odd factor of \(t\) does not affect the isomorphism class;
the group depends at most on \(\lambda_0\). We now show that distinct values of \(\lambda_0\)
give non-isomorphic groups.
The possible normalized exponents form the set
\[
\mathcal L_{m,n}
=\left\{\lambda\in\mathbb Z:
\max\{0,n-m-1\}\le\lambda\le n-2\right\}.
\]
By Lemma~\ref{lem:iso-t}, for each representative exponent \(\lambda_0\in\mathcal L_{m,n}\),
we may use \(t=2^{\lambda_0}\) as the representative of the corresponding isomorphism class.
In this representative, the defining relation becomes
\[
v^a=v^{-1+4\cdot2^{\lambda_0}}=v^{2^{\lambda_0+2}-1}.
\]

\begin{lemma}\label{center-relation}
Let \(\eta,\delta\in\{0,1\}\), and let \(\lambda_1,\lambda_2\in\mathcal L_{m,n}\) with
\(\lambda_1\ne\lambda_2\). Then
\[
G(\eta,2^{\lambda_1},\delta)\ncong G(\eta,2^{\lambda_2},\delta).
\]
\end{lemma}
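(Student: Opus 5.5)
The plan is to separate the groups by an isomorphism invariant read off from the conjugation action of \(G\) on its commutator subgroup. Put \(G=G(\eta,2^{\lambda},\delta)\) with \(\lambda\in\mathcal L_{m,n}\). Since \(G'=\l v\r\cong\ZZ_{2^n}\) is characteristic, conjugation gives a homomorphism \(\rho:G\to\Aut(G')\cong U_{2^n}\) with kernel \(C_G(G')\). Any isomorphism \(\phi:G\to H\) maps \(G'\) onto \(H'\) and \(C_G(G')\) onto \(C_H(H')\). Hence the index \(|G:C_G(G')|=|\rho(G)|\) is an isomorphism invariant. It then suffices to show that this index is \(2^{\,n-1-\lambda}\), which is strictly monotone in \(\lambda\).

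To compute \(\rho(G)\), I would use \(G=\l a,x\r\). Then \(\rho(G)\) is the subgroup of \(U_{2^n}\) generated by \(\rho(a)=r:=2^{\lambda+2}-1\) and \(\rho(x)=-1\), because \(v^a=v^{r}\) and \(v^x=v^{-1}\). Since \(-1\in\rho(G)\), we get
\[
\rho(G)=\l -1,\,-r\r=\l -1,\,1-2^{\lambda+2}\r .
\]
Write \(U_{2^n}=\l -1\r\times\l 5\r\) as in the proof of Lemma~\ref{lem:iso-t}. The element \(-r=1-2^{\lambda+2}\) is congruent to \(1\pmod 4\), so it lies in \(\l5\r\). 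By the valuation fact \(\nu_2(5^k-1)=2+\nu_2(k)\) used in Lemma~\ref{lem:iso-t}, \(-r=5^{k}\) with \(\nu_2(k)=\lambda\).

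Its order modulo \(2^n\) is therefore \(2^{\,n-2-\lambda}\). The two cases are:
\begin{itemize}
\item if \(\lambda\le n-3\), use \(|5|=2^{n-2}\) in \(U_{2^n}\);
\item if \(\lambda=n-2\), then \(-r\equiv1\), so \(\rho(G)=\{\pm1\}\).
\end{itemize}
In both cases the product with \(\l-1\r\) is direct (again using \(n\ge2\); when \(n=2\) only \(\lambda=0=n-2\) occurs). Hence \(|\rho(G)|=2^{\,n-1-\lambda}\).

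Consequently \(\lambda_1\neq\lambda_2\) gives \(|G:C_G(G')|\neq|H:C_H(H')|\), and the groups are non-isomorphic. The step needing the most care is the computation of the order of \(1-2^{\lambda+2}\) in \(U_{2^n}\), including the edge cases \(\lambda=n-2\) and small \(n\). This is routine given the structure \(U_{2^n}=\l-1\r\times\l5\r\) already invoked. Note that the argument does not use \(\eta,\delta\). It therefore shows slightly more, namely non-isomorphism for arbitrary \(\eta,\delta\) on the two sides.
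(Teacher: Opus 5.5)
Your proof is correct, but it uses a different invariant from the paper. The paper separates the groups by the order of the center. It writes $g=a^ix^jv^k$ and uses $x^a=xv^{-1}$, $a^x=av$ and the formula $(av)^i=a^iv^{(r^i-1)/(r-1)}$. From these it shows that centrality forces $j=0$, $k\equiv0\pmod{2^{n-1}}$ and $i\equiv0\pmod{2^{n-\lambda-1}}$, giving $|\Z(G)|=2^{m+\lambda-n+2}$.

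You use instead $|G:C_G(G')|$, the order of the group of automorphisms that $G$ induces on $G'$, and you correctly find $2^{n-1-\lambda}$. Your computation is lighter. It needs only $G=\langle a,x\rangle$, the two conjugation relations $v^a=v^r$ and $v^x=v^{-1}$, the fact $|v|=2^n$, and the structure of $U_{2^n}$ already used in Lemma~\ref{lem:iso-t}. It avoids the normal form, the power formula, and the count of residues of $i$ modulo $2^m$. The paper's computation yields the finer datum $|\Z(G)|$, which also involves $m$.

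Your closing remark is worth keeping. Neither invariant depends on $(\eta,\delta)$, so both arguments actually give $G(\eta_1,2^{\lambda_1},\delta_1)\not\cong G(\eta_2,2^{\lambda_2},\delta_2)$ whenever $\lambda_1\ne\lambda_2$. This stronger form, combined with Lemma~\ref{lem:iso-delta}, is what the pairwise non-isomorphism in Theorem~\ref{thm:group-classification} requires. The lemma as stated only covers fixed $(\eta,\delta)$.
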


\proof
Let \(\lambda_0\in\mathcal L_{m,n}\), and set
\(G=G(\eta,2^{\lambda_0},\delta)\) and \(r=2^{\lambda_0+2}-1\).
Since $G=NZ=\l v,x\r\l a\r$ with $\l v,x\r$ dihedral or quaternion, each element of \(G\)
has the form $a^ix^jv^k$. Put $g=a^ix^jv^k\in\Z(G)$, where $1\le i\le 2^m$,
$j\in\{0,1\}$ and $1\le k\le 2^n$. Then
\[ a^ix^jv^k=g=g^a=a^i(x^jv^k)^a=a^i(xv^{-1})^jv^{rk}. \]
Suppose that $j=1$. Then $v^{(r-1)k-1}=1$, a contradiction because $(r-1)k-1$ is odd. Thus $j=0$, that is, $(r-1)k\equiv 0\pmod{2^n}$. Moreover, $k\equiv 0\pmod{2^{n-1}}$. Then
\[ a^iv^k=g=g^x=(a^iv^k)^x=(av)^iv^{-k}=a^iv^{(r^i-1)/(r-1)-k}, \]
which forces
\[ (r^i-1)/(r-1)\equiv 2k\equiv 0\pmod{2^n}\]
and so \[i\equiv0\pmod{2^{n-\lambda_0-1}}.\]
Hence \(i\) and \(k\) have \(2^{m-(n-\lambda_0-1)}\) and \(2\) choices,
respectively. Therefore,
\[|\Z(G)|=2^{m+\lambda_0-n+2}.\]
This value is strictly increasing in \(\lambda_0\). Therefore, representatives
corresponding to distinct elements of \(\mathcal L_{m,n}\) are non-isomorphic.
This completes the proof.\qed

Consequently, for fixed \((\eta,\delta)\), the isomorphism classes are represented by
the pairwise non-isomorphic groups
\[
G(\eta,2^{\lambda_0},\delta),
\]
where \(\lambda_0\in\mathcal L_{m,n}\).
Fix \(\lambda_0\in\mathcal L_{m,n}\). The remaining ambiguity is in the pair
\((\eta,\delta)\in\{0,1\}^2\), which gives the four groups
\[
  G(0,2^{\lambda_0},0),\quad
  G(0,2^{\lambda_0},1),\quad
  G(1,2^{\lambda_0},0),\quad
  G(1,2^{\lambda_0},1).
\]
We now determine their isomorphism classes.

\begin{lemma}\label{lem:iso-delta}
Let $\max\{0,n-m-1\}\le\lambda_0\le n-2$. The four groups
$G(\eta,2^{\lambda_0},\delta)$, $\eta,\delta\in\{0,1\}$, fall into exactly three
isomorphism classes. They are represented by
\[
G(0,2^{\lambda_0},0),\qquad G(1,2^{\lambda_0},0),\qquad G(\eta,2^{\lambda_0},1),
\]
where the last group is $G(0,2^{\lambda_0},1)$ if $\lambda_0\ge n-m$, and
$G(1,2^{\lambda_0},1)$ if $\lambda_0=n-m-1$.
\end{lemma}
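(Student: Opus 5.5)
The proof will have two parts. First we produce explicit isomorphisms by changing generators. Then we separate the resulting three classes by invariants.

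\emph{Isomorphisms.} Write $r=2^{\lambda_0+2}-1$ and $G=G(1,2^{\lambda_0},1)$. The idea is to replace $a$ by $h=ax^{\epsilon}v^{k}$, or $x$ by $x'=xa^{2^{m-1}j}$ or $x'=xv^{k}$, so that $h^{2^m}$ or $x'^2$ loses its factor $z$. The commutator relations must survive the change. Since $[v,a^{2^{m-1}}]=1$, and $a^{2^{m-1}}$ conjugates $v$ by $r^{2^{m-1}}\equiv1$, the element $a^{2^{m-1}}$ centralises $v$. By the centre computation in the proof of Lemma~\ref{center-relation}, $a^{2^{m-1}}$ is central exactly when $2^{m-1}\equiv0\pmod{2^{n-\lambda_0-1}}$, that is, when $\lambda_0\ge n-m$.

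In that case put $x'=xa^{2^{m-1}}$. Then $[a,x']=[a,x]=v$ and $v^{x'}=v^{-1}$. Also
\[
x'^2=x^2a^{2^m}=z^{\delta+\eta},
\]
while $a$ is unchanged. This gives $G(1,2^{\lambda_0},1)\cong G(1,2^{\lambda_0},0)$. We still need a map making $G(0,\cdot,1)$ a new class, so here the single class for $\delta=1$ is represented by $G(0,2^{\lambda_0},1)$.

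When $\lambda_0=n-m-1$, $a^{2^{m-1}}$ is no longer central. Instead compute $(ax)^{2^m}$. The element $ax$ acts on $v$ by $v\mapsto v^{-r}$, and $-r=1-2^{\lambda_0+2}$. Then $(ax)^{2^m}$ is $a^{2^m}x^{2^m}$ times a power of $v$ coming from a geometric sum. I expect this correction to be exactly $z$ when $\lambda_0+2+m-1=n$, i.e.\ $\lambda_0=n-m-1$. Replacing $a$ by $h=ax$ keeps $[h,x]=v^{x}$-type relations up to the automorphism $v\mapsto v^{-1}$. It therefore converts $\eta$ into $\eta+\delta+1$ or a similar shift, which yields $G(1,\cdot,1)\cong G(0,\cdot,0)$ and $G(0,\cdot,1)\cong G(1,\cdot,0)$. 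These are the statements in the Remark, and they leave $G(1,\cdot,1)$ as the representative there.

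This geometric-sum computation is the step I expect to be the main obstacle. It requires tracking $\sum_i (-r)^i \bmod 2^n$ exactly, together with the identification $[ax,x]=v$ after the substitution. I will first check it in the smallest case $m=2$, $n=3$, $\lambda_0=0$.

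\emph{Non-isomorphism.} I will use invariants that are preserved by isomorphism. The first is $\Omega_1$ data, namely the number of involutions, or equivalently whether there is an involution outside $\Phi(G)$ or outside $N$. The second is the isomorphism type of the maximal subgroup $N\Phi(G)$, or of $\langle G',x\rangle$ (dihedral versus quaternion). The third is the order of $a$, or the exponent of $G/\langle z\rangle$-cosets: $a$ has order $2^{m+\eta}$, and every element outside $N\langle a^2\rangle$ is conjugate to $a^{\pm1}$ times $\Phi(G)$. Concretely, I plan to count elements of order $2^{m+1}$ to detect $\eta$. To detect $\delta$, I will check whether every element of the coset $x\Phi(G)$ has order $4$. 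By the parameter relations these two invariants take distinct pairs of values on the three representatives. The verification uses the normal form $a^ix^jv^k$ from Lemma~\ref{center-relation} and the power formula $(a^iv^k)^{2}=a^{2i}v^{k(r^i+1)}$, which reduces the question to $2$-adic valuations in $U_{2^n}$.
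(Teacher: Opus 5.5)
Your treatment of $\lambda_0\ge n-m$ is correct and is essentially the paper's argument. Put $x'=xa^{2^{m-1}}$. Then $[a,x']=v$, $v^{x'}=v^{-1}$ and $x'^2=z^{\eta+\delta}$, which gives $G(1,2^{\lambda_0},1)\cong G(1,2^{\lambda_0},0)$.

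\textbf{The gap is the endpoint $\lambda_0=n-m-1$.} The isomorphisms you aim for there, $G(1,2^{\lambda_0},1)\cong G(0,2^{\lambda_0},0)$ and $G(0,2^{\lambda_0},1)\cong G(1,2^{\lambda_0},0)$, are false. Put $M=2^{m-1}$ and $q=2^n$. Every involution lies in $G'$, $a^MG'$, $xG'$ or $a^MxG'$. At the endpoint these contain, respectively:
\begin{itemize}
\item $1$ involution;
\item $2$ involutions;
\item $q$ or $0$ involutions, according as $\delta=0$ or $\delta=1$;
\item $q$ or $0$ involutions, according as $\eta+\delta+1$ is even or odd.
\end{itemize}
So $(\eta,\delta)=(0,0),(0,1),(1,0),(1,1)$ give $3+q$, $3+q$, $3+2q$, $3$ involutions. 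Your two isomorphisms would in any case leave only two classes, which contradicts the statement you are proving. The endpoint clause of item (3) in the Remark after Theorem~\ref{main-1} contradicts this lemma and item (2) there, so it cannot be your target.

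The substitution $a\mapsto ax$ cannot work either, for two reasons. First, $ax$ acts on $G'$ by $v\mapsto v^{-r}$ with $-r\equiv 1\pmod 4$. The $a$-generator of any presentation $G(\eta',t',\delta')$ must act on $G'$ by an exponent $\equiv -1\pmod 4$, whichever generator of $G'$ is used. Second, $n\ge m+1$ at the endpoint, so $(ax)^{2^m}=a^{2^m}v^{2^{m-1}u}$ with $u$ odd does not even lie in $\langle z\rangle$.

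\textbf{The missing observation.} $a^{M}$ need not be central; it only has to centralise $v$, and that is a defining relation for all parameters. With this, your own substitution works in both cases:
\[
(xa^{M})^2=x^2(av)^{M}a^{M}=z^{\eta+\delta}v^{Q},\qquad Q=\frac{r^{M}-1}{r-1}.
\]
Here $\nu_2(Q)=m+\lambda_0\ge n-1$, with equality exactly when $\lambda_0=n-m-1$. Hence $G(\eta,2^{\lambda_0},\delta)\cong G(\eta,2^{\lambda_0},\delta+\eta+\epsilon)$, where $\epsilon=1$ precisely at the endpoint. At the endpoint this yields $G(0,2^{\lambda_0},0)\cong G(0,2^{\lambda_0},1)$ and leaves $G(1,2^{\lambda_0},1)$ as a separate class. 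This is the paper's argument, which uses $h=a^{M}x$.

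\textbf{Non-isomorphism.} The involution count alone separates the three classes in both cases, and that is all the paper uses. The other invariants you list are not intrinsic as stated:
\begin{itemize}
\item The type of $\langle G',x\rangle$ depends on the choice of $x$. For $\lambda_0\ge n-m$, in $G(1,2^{\lambda_0},0)$ the subgroup $\langle v,x\rangle$ is dihedral, but $\langle v,xa^{M}\rangle$ is generalized quaternion.
\item The order of $a$ is a feature of the presentation, not of the group.
\item $x\Phi(G)$ contains $xa^2$, whose order is at least $2^{m-1}$. So your test ``every element of $x\Phi(G)$ has order $4$'' fails in all four groups once $m\ge 4$.
\end{itemize}
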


\proof
Let $G(\eta,2^{\lambda_0},\delta)=\langle a,x,v\rangle$ with $r=2^{\lambda_0+2}-1$.

We first use the number of involutions to eliminate all impossible isomorphisms. Put
\[
  I(\eta,\lambda_0,\delta)
  =\{g\in G(\eta,2^{\lambda_0},\delta):g^2=1\}.
\]
Groups with different numbers of involutions are non-isomorphic.

Let \(q=|G'|=2^n\) and \(M=2^{m-1}\). By the power formula, every involution lies in one of the four
cosets
\[
  G',\qquad a^MG',\qquad xG',\qquad a^MxG'.
\]
Indeed, write an arbitrary element as \(g=a^ix^jv^k\). Modulo \(G'\), it is determined by
\(a^i x^j\). In the quotient \(\overline G=G/G'\), we have \(\bar x^2=1\), and
\(\bar a\) and \(\bar x\) commute because \([a,x]=v\in G'\). Moreover,
\(\bar a^{2^m}=1\). Hence \(\bar g^2=1\) forces \(\bar a^{2i}=1\), so
\(i\equiv0\) or \(M\pmod{2^m}\); since \(x^2=z^\delta\in G'\), the exponent \(j\) is only
taken modulo \(2\). These four possibilities for \((i,j)\) give precisely the four displayed cosets.
We now count the involutions in these cosets.

The coset $G'$ contains the unique involution $z$.  In $xG'$, an
element has the form $xw$, and
$(xw)^2=x^2=z^{\delta}$; hence this coset contributes $q$ involutions
when $\delta=0$, and none when $\delta=1$.
An element of the coset $a^MG'$ has the form $a^Mv^u$. It is an involution
precisely when
\[ 1=(a^Mv^u)^2=a^{2^m}v^{2u}=z^{\eta}v^{2u}=v^{2^{n-1}\eta}v^{2u}. \]
It follows that $2u\equiv 2^{n-1}\eta\pmod{2^n}$. Thus the coset
$a^MG'$ provides exactly $2$ involutions.

Finally, let $g=a^Mxv^{\rho}\in a^M xG'$. Put \(w=a^Mx\) and
\(Q=(r^M-1)/(r-1)\). Direct computation gives
\[
  w^2=x^2a^{2^m}v^Q=z^{\eta+\delta}v^Q,
  \qquad
  w^{-1}vw=v^{-r^M}.
\]
The second relation implies \(v^{\rho}w=wv^{-\rho r^M}\), and hence
\[
  g^2=w^2v^{\rho(1-r^M)}
  =z^{\eta+\delta}v^{Q\{1-\rho(r-1)\}}.
\]
The factor $1-\rho(r-1)$ is odd. If $\lambda_0\le n-3$, a binomial expansion
gives \(v_2(Q)=m+\lambda_0\), whereas
Lemma~\ref{parameter-relation} gives \(m+\lambda_0\ge n-1\). If
\(\lambda_0=n-2\), then \(r\equiv-1\pmod{2^n}\), so the even number of
terms in \(Q=\sum_{j=0}^{M-1}r^j\) gives \(Q\equiv0\pmod{2^n}\).
Thus \(v^{Q\{1-\rho(r-1)\}}=z^S\), where
\(S\equiv2^{m+\lambda_0-n+1}\pmod2\). Therefore \(g^2=1\) is equivalent to
\[
  \eta+\delta+2^{m+\lambda_0-n+1}\equiv0\pmod 2.
\]
Thus this coset contributes $q$ involutions precisely when
$\eta=\delta$ if $m+\lambda_0\ge n$, and precisely when
$\delta=\eta+1$ if $m+\lambda_0=n-1$; in all remaining cases it contributes
no involutions.

Combining these contributions, we obtain
{\small
\[
|I(\eta,\lambda_0,\delta)|=
\begin{cases}
3+2q & \text{if } (\eta,\delta)=(0,0)\text{ and }\lambda_0\ge n-m,\\
3    & \text{if } (\eta,\delta)=(0,1)\text{ and }\lambda_0\ge n-m,\\
3+q  & \text{if } (\eta,\delta)=(1,0)\text{ and }\lambda_0\ge n-m,\\
3+q  & \text{if } (\eta,\delta)=(1,1)\text{ and }\lambda_0\ge n-m,\\
3+q  & \text{if } (\eta,\delta)=(0,0)\text{ and }\lambda_0=n-m-1,\\
3+q  & \text{if } (\eta,\delta)=(0,1)\text{ and }\lambda_0=n-m-1,\\
3+2q & \text{if } (\eta,\delta)=(1,0)\text{ and }\lambda_0=n-m-1,\\
3    & \text{if } (\eta,\delta)=(1,1)\text{ and }\lambda_0=n-m-1.
\end{cases}
\]}
Since isomorphic groups have the same number of involutions, groups for which
these values differ are non-isomorphic. Thus only the following pairs still
require consideration:
\begin{itemize}
      \item[(i)] If $\lambda_0=n-m-1$, then possibly $G(0,2^{\lambda_0},0)\cong G(0,2^{\lambda_0},1)$,
      \item[(ii)] If $\lambda_0\ge n-m$, then possibly $G(1,2^{\lambda_0},0)\cong G(1,2^{\lambda_0},1)$.
\end{itemize}
Equality of involution numbers is only a necessary condition, so these two
pairs remain candidates for isomorphism.

We now establish both isomorphisms
explicitly. Put $h=a^Mx$.
Then $G(\eta,2^{\lambda_0},\delta)=\l a,x\r=\l a,h\r$. By the preceding
\(2\)-adic estimates, \(r^M\equiv1\pmod{2^n}\), and hence
\[
  [a,h]=[a,a^Mx]=[a,x]=v,
  \quad\text{and}\quad
  v^h=(v^{a^M})^x=v^{-r^M}=v^{-1}.
\]
Thus all defining relations except the one involving \(h^2\) already have
the required form. Substituting \(\rho=0\) into the preceding computation of
\(g^2\) gives
\[
  h^2=z^{\eta+\delta}v^Q
  =z^{\eta+\delta+2^{m+\lambda_0-n+1}}.
\]
Therefore
\begin{align*}
      G(\eta,2^{\lambda_0},\delta)=\l a,h~|~&[a,h]=v,~v^{2^n}=1,~v^{a}=v^{2^{\lambda_0+2}-1},~v^h=v^{-1},\\
      &a^{2^m}=z^{\eta},~h^2=z^{\delta+\eta+2^{m+\lambda_0-n+1}}  \r\cong G(\eta,2^{\lambda_0},\delta+\eta+2^{m+\lambda_0-n+1}).
\end{align*}
Thus one of the following statements holds:
\begin{itemize}
      \item[(i)] If $\lambda_0=n-m-1$, then $G(0,2^{\lambda_0},0)\cong G(0,2^{\lambda_0},1)$,
      \item[(ii)] If $\lambda_0\ge n-m$, then $G(1,2^{\lambda_0},0)\cong G(1,2^{\lambda_0},1)$.
\end{itemize}

In conclusion, there are exactly $3$ pairwise non-isomorphic groups, that is,
\[ G(0,2^{\lambda_0},0),~G(1,2^{\lambda_0},0),~G(\eta,2^{\lambda_0},1), \]
where either $\eta=0$ if $\lambda_0\ge n-m$, or $\eta=1$ if $\lambda_0=n-m-1$.
This completes the proof.
\qed

Combining Lemmas~\ref{lem:iso-t}--\ref{lem:iso-delta}, we obtain the following
classification of the family \(G(\eta,t,\delta)\).

\begin{theorem}\label{thm:group-classification}
Let \(G=G(\eta,t,\delta)\) be as in Lemma~\ref{lem:cyclic-commutator}(2).
Then the isomorphism class of \(G\) is represented by exactly one of the
following three pairwise non-isomorphic groups:
\[
G(0,2^{\lambda_0},0),\qquad
G(1,2^{\lambda_0},0),\qquad
\begin{cases}
G(0,2^{\lambda_0},1) & \text{if }\lambda_0\ge n-m,\\
G(1,2^{\lambda_0},1) & \text{if }\lambda_0=n-m-1.
\end{cases}
\]
where \(\lambda_0\in\mathcal L_{m,n}\) is the representative exponent of \(G\).
\end{theorem}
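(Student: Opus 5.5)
The plan is to assemble Theorem~\ref{thm:group-classification} from Lemmas~\ref{lem:iso-t}, \ref{center-relation} and \ref{lem:iso-delta}, organized as a reduction followed by separation of invariants. Start with an arbitrary $G=G(\eta,t,\delta)$ as in Lemma~\ref{lem:cyclic-commutator}(2), with $m,n>1$ fixed. Lemma~\ref{lem:iso-t} replaces $t$ by $2^{\lambda_0}$ without changing $\eta$ or $\delta$, where $\lambda_0$ is the normalized exponent of $t$. The same lemma shows $\lambda_0\in\mathcal L_{m,n}$, using Lemma~\ref{parameter-relation} for the lower bound. So every group in the family is isomorphic to one of the four groups $G(\eta,2^{\lambda_0},\delta)$ with $(\eta,\delta)\in\{0,1\}^2$ and $\lambda_0\in\mathcal L_{m,n}$.

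Next, fix $\lambda_0$ and apply Lemma~\ref{lem:iso-delta}. These four groups collapse to the three displayed representatives. The fused pair is $(0,1)\sim(1,1)$ when $\lambda_0\ge n-m$, and $(0,0)\sim(0,1)$ when $\lambda_0=n-m-1$. In the latter case the class of $\delta=1$ that is distinct from $G(0,2^{\lambda_0},0)$ is the one represented by $G(1,2^{\lambda_0},1)$, as the involution table shows. Lemma~\ref{lem:iso-delta} also gives that the three listed groups are pairwise non-isomorphic, via the involution counts $3+2q$, $3+q$, $3$ read off in each regime. So each $G$ is isomorphic to at least one, and at most one, of the three listed groups for its own $\lambda_0$.

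The step I expect to be the real obstacle is showing that $\lambda_0$ is an isomorphism invariant of $G$, so that "the representative exponent of $G$" is well defined. Otherwise two listed groups with different $\lambda_0$ and different $(\eta,\delta)$ could coincide. Lemma~\ref{center-relation} only compares groups with the same $(\eta,\delta)$. However, its center computation $|\Z(G)|=2^{m+\lambda_0-n+2}$ never used the relations $a^{2^m}=z^\eta$ or $x^2=z^\delta$: it used only the normal form $a^ix^jv^k$ and the conjugation actions of $a$ and $x$.

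I would therefore recheck that argument for general $(\eta,\delta)$. The point to watch is that $g^a$ and $g^x$ are computed the same way, with $x^2$ landing in $\l z\r$ and being harmless. This shows $|\Z(G)|$ depends only on $m,n,\lambda_0$. Since $|G|=2^{m+n+1}$ and $|G'|=2^n$ are invariants, $m$ and $n$ are determined by $G$. Hence $\lambda_0=\log_2|\Z(G)|-m+n-2$ is an invariant of the abstract group.

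Combining the two steps, the isomorphism class of $G$ determines $\lambda_0$, and then exactly one of the three representatives for that $\lambda_0$.
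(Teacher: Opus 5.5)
Your route is the paper's: the theorem is proved by combining Lemma~\ref{lem:iso-t}, Lemma~\ref{center-relation} and Lemma~\ref{lem:iso-delta}. Lemma~\ref{lem:iso-t} reduces $t$ to $2^{\lambda_0}$, leaves $\eta,\delta$ unchanged, and places $\lambda_0$ in $\mathcal L_{m,n}$.

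Your extra step is sound and worth keeping. Lemma~\ref{center-relation} is stated only for equal $(\eta,\delta)$. Its computation $|\Z(G)|=2^{m+\lambda_0-n+2}$, however, uses only three ingredients, none of which involve $\eta$ or $\delta$:
\begin{itemize}
\item the normal form $a^ix^jv^k$;
\item the actions $x^a=xv^{-1}$ and $v^a=v^r$;
\item the identity $(av)^i=a^iv^{(r^i-1)/(r-1)}$.
\end{itemize}
Since $|G|=2^{m+n+1}$ and $|G'|=2^n$ fix $m$ and $n$, this makes $\lambda_0$ an invariant of the abstract group. Strictly, the theorem as stated, with $\lambda_0$ read off from $t$, only needs comparisons at a fixed $\lambda_0$. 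Your step is what makes the whole list irredundant, as used in the Corollary following the theorem and in Theorem~\ref{main-1}. The paper leaves this implicit.

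There is one wrong claim in your second paragraph. For $\lambda_0\ge n-m$ the fused pair is $(\eta,\delta)=(1,0)\sim(1,1)$, not $(0,1)\sim(1,1)$.
\begin{itemize}
\item The isomorphism in Lemma~\ref{lem:iso-delta} keeps $a$ and replaces $x$ by $h=a^{2^{m-1}}x$, so it preserves $\eta$. It gives $G(\eta,2^{\lambda_0},\delta)\cong G(\eta,2^{\lambda_0},\delta')$ with $\delta'\equiv\delta+\eta+2^{m+\lambda_0-n+1}\pmod 2$.
\item When $\lambda_0\ge n-m$ the power $2^{m+\lambda_0-n+1}$ is even. So only $\eta=1$ yields a nontrivial identification.
\item The involution counts you invoke also rule out your pair: $G(0,2^{\lambda_0},1)$ has $3$ involutions, while $G(1,2^{\lambda_0},1)$ has $3+q$.
\end{itemize}
The set of representatives is the same under either reading, so the theorem still follows once you cite Lemma~\ref{lem:iso-delta} correctly. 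As written, though, you place $G(1,2^{\lambda_0},1)$ in the class of the wrong representative.
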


\begin{corollary}
For fixed \(m,n>1\), the family \(G(\eta,t,\delta)\) with admissible parameters
has exactly \(3\min\{m,n-1\}\) isomorphism classes.
\end{corollary}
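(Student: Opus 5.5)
The corollary follows by counting. I will combine Theorem~\ref{thm:group-classification} with the description of $\mathcal L_{m,n}$. That theorem and Lemmas~\ref{lem:iso-t}--\ref{lem:iso-delta} show three things. Every group $G(\eta,t,\delta)$ with admissible parameters is isomorphic to one of the representatives $G(\eta,2^{\lambda_0},\delta)$ with $\lambda_0\in\mathcal L_{m,n}$. For each fixed $\lambda_0$ these representatives fall into exactly three classes. And different $\lambda_0$ give non-isomorphic groups. The total number of classes is therefore $3\,|\mathcal L_{m,n}|$, provided the classes attached to different $\lambda_0$ are disjoint. They must also all be realized by admissible parameters.

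\emph{Disjointness across $\lambda_0$.} Lemma~\ref{center-relation} compares only groups with the same $(\eta,\delta)$, so it is not quite enough on its own. I will instead use its computation directly. That computation never used $\eta$ or $\delta$, and it gives $|\Z(G)|=2^{m+\lambda_0-n+2}$ for every $G(\eta,2^{\lambda_0},\delta)$. Hence $\lambda_0$ is an isomorphism invariant of the whole family. The three classes belonging to distinct $\lambda_0$ are then pairwise disjoint, and there are no coincidences between the lists.

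\emph{Realizability.} For each $\lambda_0\in\mathcal L_{m,n}$, the parameters $t=2^{\lambda_0}$ with $(\eta,\delta)\in\{0,1\}^2$ must define groups of the form in Lemma~\ref{lem:cyclic-commutator}(2), of order $2^{m+n+1}$. I expect this to be the main obstacle, since it is not explicitly proved earlier. I would try to verify it by building $G$ as an extension of $N=\langle v,x\rangle$ by a cyclic group $\langle a\rangle$. Conjugation by $a$ acts as the automorphism $v\mapsto v^r$, $x\mapsto xv^{-1}$ of $N$, with $r=2^{\lambda_0+2}-1$. Then $a^{2^m}$ is set equal to the central element $z^\eta$. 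Two checks are needed. First, the $2^m$-th power of this automorphism must be inner, namely trivial, or conjugation by $x$ in the appropriate cases; the required identity $r^{2^{m-1}}\equiv1\pmod{2^n}$ is exactly the condition $\lambda_0\ge n-m-1$ via the valuation computation of Lemma~\ref{parameter-relation}. Second, the automorphism must fix $z^\eta$. Since the paper treats these groups as given, it is acceptable to rely on Lemma~\ref{lem:cyclic-commutator} for their existence.

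\emph{Counting.} $|\mathcal L_{m,n}|=n-2-\max\{0,n-m-1\}+1$. If $n\le m+1$, this equals $n-1$. If $n>m+1$, it equals $(n-1)-(n-m-1)=m$. In both cases $|\mathcal L_{m,n}|=\min\{m,n-1\}$. Multiplying by the three classes per $\lambda_0$ gives $3\min\{m,n-1\}$.
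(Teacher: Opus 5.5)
Your proposal is correct and follows the paper's own argument: Lemma~\ref{lem:iso-delta} gives exactly three classes for each $\lambda_0$, and $|\mathcal L_{m,n}|=\min\{m,n-1\}$. Your two additions are sound and tighten the paper's two-line proof: the centre computation in Lemma~\ref{center-relation} never uses $\eta$ or $\delta$, so $|\Z(G)|=2^{m+\lambda_0-n+2}$ separates distinct $\lambda_0$ across all $(\eta,\delta)$; and realizability should rest on your extension construction rather than on Lemma~\ref{lem:cyclic-commutator}, which only describes the form such groups must have. In that construction $a^{2^m}=z^\eta$ is central in $N$, so the only requirement is $\phi^{2^m}=1$, with no conjugation by $x$ involved, and this reduces to the same inequality $m+\lambda_0+1\ge n$ together with $\phi(z)=z$.
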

\proof
For each \(\lambda_0\in\mathcal L_{m,n}\), Lemma~\ref{lem:iso-delta} gives exactly
three isomorphism classes. Since
\(|\mathcal L_{m,n}|=\min\{m,n-1\}\), the total number is
\(3\min\{m,n-1\}\). \qed

\vskip0.3in
\subsection{Exact bicyclic generating pairs}~~~~~~~~~~~~~~~~~~~~~~~~~~~~~~~~~~~
\vskip0.15in

As in the preceding section, we first identify the set \(\Omega\) of exact
bicyclic generating pairs of \(G=G(\eta,t,\delta)\) and determine its
cardinality. To determine \(\Omega\), we first establish the exponentiation
formula for an arbitrary generator of \(G\).

For convenience, in this subsection we extend the \(2\)-adic valuation by
setting \(\nu_2(0)=+\infty\), and write \(\lambda=\nu_2(t)\).

\begin{lemma}\label{lem:power-formula}
Let $G = G(\eta,t,\delta)$ be as defined above.
Every element of \(G\) occurring in some generating pair can be written as
\(g=a^i(ax)^j\), where at least one of \(i\) and \(j\) is odd. For \(s\ge2\),
the power formulas for \(g\) are given as follows.
\begin{itemize}
\item[(i)] If \(\lambda\ge n-2\), then
$$g^{2^s} = (a^i(ax)^j)^{2^s} =
\begin{cases}
a^{2^s(i+j)} & \text{if } i \text{ is odd},\\[4pt]
a^{2^s(i+j)} v^{2^{s-1}j} & \text{if } i \text{ is even}.
\end{cases}\eqno(4.1)$$
\item[(ii)] If \(\lambda\le n-3\), then
$$g^{2^s} = (a^i(ax)^j)^{2^s} =
\begin{cases}
a^{2^s(i+j)} v^{2^{s+\lambda+\nu_2(i+j)}u_1}
  & \text{if } j \text{ is odd and } i \text{ is odd},\\[4pt]
a^{2^s(i+j)} v^{2^{s-1} u_2}
  & \text{if } j \text{ is odd and } i \text{ is even},\\[4pt]
a^{2^s(i+j)} v^{2^{s+\lambda+\nu_2(j)}u_3}
  & \text{if } j \text{ is even and } i \text{ is odd}.
\end{cases}\eqno(4.2)$$
where $u_1, u_2, u_3$ are odd integers.
\end{itemize}
\end{lemma}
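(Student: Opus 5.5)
The plan is to reduce everything to the computation of one square $g^2$, and then to iterate inside the subgroup $\langle a^2\rangle G'$, where the action of $a^2$ on $v$ is very close to trivial. Put $r=-1+4t$, so $v^a=v^r$ and $v^x=v^{-1}$.

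\emph{Normal form and generators.} By Lemma~\ref{lem:cyclic-commutator}(2) we have $G=\langle a\rangle\langle ax\rangle$, so every element has the form $a^i(ax)^j$. Since $\Phi(G)=G'\langle a^2\rangle$, the quotient $G/\Phi(G)\cong\ZZ_2^2$ has basis the images of $a$ and $ax$. Lemma~\ref{lem:generating-matrix} then shows that an element lying in a generating pair is not in $\Phi(G)$. Equivalently, at least one of $i,j$ is odd, which gives the first assertion.

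\emph{Computing $g^2$.} I would first compute $(ax)^2=axax=a^2x^ax$. From $[a,x]=v$ we get $x^a=xv^{\pm r^{\pm1}}$ up to the precise convention, so $(ax)^2=a^2v^{e_0}z^{\delta}$ with $e_0$ odd. For even $j$ this gives $(ax)^j=(a^2v^{e_0}z^\delta)^{j/2}$. For odd $j$, write $(ax)^j=(ax)^{j-1}\cdot ax$. Hence $g$ takes the form $a^{i+j}v^{c}$ when $j$ is even, and $a^{i+j-1}\,a x\,v^{c}$ when $j$ is odd. Squaring once more, using $v^x=v^{-1}$, $v^a=v^r$ and $x^2=z^\delta$, gives in every parity case
\[
g^2=a^{2(i+j)}v^{c'}z^{\epsilon}.
\]
Here the exponent $c'$ is a geometric sum of powers of $r$, possibly combined with a factor $(r^k\pm1)$.

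\emph{Valuation of $c'$.} The key arithmetic facts are $\nu_2(r+1)=\lambda+2$ and $\nu_2(r^{2k}-1)=\lambda+3+\nu_2(k)$, valid when $\lambda\le n-3$; these come from the binomial expansion already used in Lemma~\ref{parameter-relation}. With them I expect:
\begin{itemize}
\item[(a)] $\nu_2(c')=1+\lambda+\nu_2(i+j)$ when $i$ and $j$ are both odd;
\item[(b)] $\nu_2(c')=0$ when $i$ is even and $j$ is odd, coming from the odd commutator contribution;
\item[(c)] $\nu_2(c')=1+\lambda+\nu_2(j)$ when $j$ is even and $i$ is odd.
\end{itemize}
When $\lambda\ge n-2$ we have $r\equiv-1\pmod{2^n}$. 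Then most of these terms vanish modulo $2^n$, and one gets $c'=0$ for odd $i$ and $c'=j$ up to sign for even $i$.

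\emph{Iteration for $s\ge2$.} Since $a^2$ acts on $v$ by $y=r^2\equiv1\pmod 8$, we have
\[
(a^{2k}v^{c'})^{N}=a^{2kN}v^{c'(1+y^k+\dots+y^{k(N-1)})}.
\]
For $N=2^{s-1}$ the geometric sum has $2$-adic valuation exactly $s-1$. The central factor $z^\epsilon$ disappears after one further squaring, which is where $s\ge2$ is needed. This multiplies the valuations found above by $2^{s-1}$ and yields (4.1) and (4.2), with odd cofactors $u_1,u_2,u_3$.

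I expect the main obstacle to be the bookkeeping in computing $g^2$ in the three parity cases. One has to get the exact $2$-adic valuation of the exponent of $v$ right, in particular the $\nu_2(i+j)$ in case (a), where a near-cancellation of the type $r^{k}+1$ versus $r^k-1$ must be handled carefully. I would treat the case of odd $j$ by conjugating to the form $a^{k}xv^{c}$ with $k=i+j-1$, and then use $(a^kx)^2=a^kx^{a^{-k}}\cdots$ together with the formula $\nu_2\bigl((r^{k+1}+1)\bigr)$ coming from $r\equiv-1\pmod4$.
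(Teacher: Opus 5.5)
Your proposal is correct and is essentially the paper's argument: the paper also brings $g$ to a normal form ($x^ja^{i+j}v^{c}$), computes $g^2$ explicitly, and then iterates, and your geometric sum $\sum_{l=0}^{2^{s-1}-1}r^{2(i+j)l}$ is exactly the paper's product $\prod_{u=1}^{s-1}(r^{2^u(i+j)}+1)$, with the valuations read off from $\nu_2(r+1)=\lambda+2$ and lifting the exponent as you indicate. To close the loose ends, note that $[a,x]=a^{-1}x^{-1}ax$ gives $x^a=xv^{-1}$, so $(ax)^2=a^2vz^{\delta}$ and the exponent in (4.1) is exactly $2^{s-1}j$ rather than only up to sign; moreover, for odd $j$, writing $g=a^{i+j}xv^{c}$, one gets $g^2=a^{2(i+j)}z^{\delta}v^{\frac{r^{i+j}-1}{r-1}(1-c(r-1))}$ with $1-c(r-1)$ odd, so the relevant factor is $r^{i+j}-1$ and no delicate cancellation occurs.
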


\proof Let $G=G(\eta,t,\delta)=NZ$ with $m>1$, where
$$N=\langle v,x \mid v^{2^n}=1,\ n>1,\ x^2\in \langle z\rangle,\ z=v^{2^{n-1}},\ v^x=v^{-1}\rangle \lhd G,$$
$Z=\langle a\rangle$, $Z\cap N\leq \langle z\rangle=Z(N)$, and moreover,
$$|Z/(Z\cap N)|=2^m,~ [a,x]=v,~ [v,a^{2^{m-1}}]=1,~\mbox{and}~v^a=v^{-1+4t},$$
where \(t\in\mathbb Z\).
In this case, $G'=\langle v\rangle\cong \mathbb{Z}_{2^n}$ for $n\geq 2$, $\Phi(G)=G'\langle a^2\rangle$, and $G=\langle a\rangle\langle ax\rangle$ is a product of two cyclic subgroups.
Therefore, every generating element $g$ of $G$ can be written as $g=a^i(ax)^j$, where $1\leq i\leq o(a)$, $1\leq j\leq o(ax)$, and at least one of $i$ and $j$ is odd.

Let $r=-1+4t$. Then $v^a=a^{-1}va=v^r$, and a direct calculation yields
$$(av)^\xi=a^\xi v^{r^{\xi-1}+r^{\xi-2}+\cdots+r+1}=a^\xi v^{(r^\xi-1)/(r-1)} \eqno (4.3)$$ for some positive integer $\xi$.
Since $v^x=v^{-1}$ and $[a,x]=v$, we have $a^x=av$, $[a,x^2]=[v,x^2]=1$, and
\begin{align*}
(ax)^2 &= (xav)^2 = x^2a^xv^xa^x = x^2a^2v,\\
(ax)^3 &= (x^2a^2v)(ax) = x^3(a^2)^xv^xa^x = x^3a^3v^{r^2+1},\\
(ax)^4 &= x^3a^3v^{r^2+1}ax = x^4(a^3)^x(v^{r^2+1})^xa^x = x^4a^4v^{r^2+1},\\
(ax)^5 &= x^4a^4v^{r^2+1}ax = x^5a^5v^{r^4+r^2+1},\\
&\;\;\vdots
\end{align*}
By induction, we obtain
$$(ax)^\theta=
\begin{cases}
x^{\theta}a^{\theta}v^{\lceil \theta/2\rceil}
  & \text{if } \lambda\ge n-2,\\
x^\theta a^\theta v^{(r^{2\lceil \theta/2\rceil}-1)/(r^2-1)}
  & \text{if } \lambda\le n-3.
\end{cases}$$
where $\left\lceil \frac{\theta}{2}\right\rceil=\begin{cases}
(\theta+1)/2 & \text{if } \theta \text{ is odd},\\
\theta/2 & \text{if } \theta \text{ is even}.
\end{cases}$
In particular, for $\theta=2^s$ with $s\geq 2$, we have
$$(ax)^{2^s}=\begin{cases}
x^{2^s}a^{2^s}v^{2^{s-1}}
  & \text{if } \lambda\ge n-2,\\
x^{2^s}a^{2^s}v^{(r^{2^s}-1)/(r^2-1)}
  & \text{if } \lambda\le n-3.
\end{cases}$$
When \(\lambda\le n-3\), we have
\(2^{\lambda+2}\parallel(r+1)\). A calculation shows that
\(2^{\lambda+2+s}\parallel(r^{2^s}-1)\) and
\(2^{\lambda+3}\parallel(r^2-1)\);
consequently, $2^{s-1}\parallel (r^{2^s}-1)/(r^2-1)$.
In summary, $$(ax)^{2^s}=x^{2^s}a^{2^s}v^{2^{s-1}u}, \eqno(4.4) $$
where $u$ is an odd integer. We now treat the two cases
\(\lambda\ge n-2\) and \(\lambda\le n-3\) separately.

\medskip
\noindent\textbf{Case 1:} \(\lambda\ge n-2\), equivalently,
\(r\equiv-1\pmod{2^n}\).
\medskip
\noindent In this case, we have
\[
g=a^i(ax)^j=a^i\cdot x^ja^jv^{\lceil j/2\rceil}
=\begin{cases}
x^ja^{i+j}v^{j/2} & \text{if } j \text{ is even and } i \text{ is odd},\\
x^ja^{i+j}v^{(j+1)/2} & \text{if } j \text{ is odd and } i \text{ is even},\\
x^ja^{i+j}v^{(j-1)/2} & \text{if } j \text{ is odd and } i \text{ is odd}.
\end{cases}
\]
A direct computation gives
\[
g^2=\begin{cases}
a^{2(i+j)} & \text{if } j \text{ is even and } i \text{ is odd},\\
x^2a^{2(i+j)}v^j & \text{if } j \text{ is odd and } i \text{ is even},\\
x^2a^{2(i+j)} & \text{if } j \text{ is odd and } i \text{ is odd}.
\end{cases}
\]
Since \(x^4=1\), the factor \(x^2\) in the last two lines becomes \(1\)
after one further squaring. Repeating this calculation gives, for \(s\ge2\),
\[
g^{2^s}=\begin{cases}
a^{2^s(i+j)} & \text{if } i \text{ is odd},\\
a^{2^s(i+j)}v^{2^{s-1}j} & \text{if } i \text{ is even}.
\end{cases}
\]

\medskip
\noindent\textbf{Case 2:} \(\lambda\le n-3\), equivalently,
\(r\not\equiv-1\pmod{2^n}\).
\medskip
\noindent In this case, we have
\[
g=a^i(ax)^j=a^i\cdot x^ja^jv^{(r^{2\lceil j/2\rceil}-1)/(r^2-1)}
=\begin{cases}
x^ja^{i+j}v^{\,r^j\left(\frac{r^i-1}{r-1}\right)+\frac{r^{j+1}-1}{r^2-1}}
  & \text{if } j \text{ is odd},\\
x^ja^{i+j}v^{\,\frac{r^j-1}{r^2-1}}
  & \text{if } j \text{ is even}.
\end{cases}
\]
Squaring once gives
\[
g^2=\begin{cases}
x^2a^{2(i+j)}v^{\frac{(r^{i+j}-1)(r^j+1)}{r^2-1}}
  & \text{if } j \text{ is odd},\\
a^{2(i+j)}v^{\frac{(r^{i+j}+1)(r^j-1)}{r^2-1}}
  & \text{if } j \text{ is even}.
\end{cases}
\]
Since \(x^4=1\), one further squaring yields
\[
g^{2^2}=\begin{cases}
a^{2^2(i+j)}v^{\frac{(r^{2(i+j)}+1)(r^{i+j}-1)(r^j+1)}{r^2-1}}
  & \text{if } j \text{ is odd},\\
a^{2^2(i+j)}v^{\frac{(r^{2(i+j)}+1)(r^{i+j}+1)(r^j-1)}{r^2-1}}
  & \text{if } j \text{ is even}.
\end{cases}
\]
Repeating this calculation gives, for every \(s\ge2\),
\[
g^{2^s}=\begin{cases}
a^{2^s(i+j)}v^{\frac{(r^{i+j}-1)(r^j+1)\prod_{u=1}^{s-1}(r^{2^u(i+j)}+1)}{r^2-1}}
  & \text{if } j \text{ is odd},\\
a^{2^s(i+j)}v^{\frac{(r^{i+j}+1)(r^j-1)\prod_{u=1}^{s-1}(r^{2^u(i+j)}+1)}{r^2-1}}
  & \text{if } j \text{ is even}.
\end{cases}\eqno(4.5)
\]

We now determine the highest power of $2$ dividing the exponent of $v$ in (4.5).
Since $r=-1+4t$, it follows that $r^{2^u}\equiv 1 \pmod{2^{u+2}}$
for each $u=1,2,\ldots,s-1$, and hence
\[r^{2^u(i+j)}\equiv 1 \pmod{2^{u+2}}, ~u=1,2,\ldots,s-1,\]
which implies that $2\parallel (r^{2^u(i+j)}+1)$ for each $u=1,2,\ldots,s-1$. Consequently,
\[2^{s-1}\parallel \prod_{u=1}^{s-1}\bigl(r^{2^u(i+j)}+1\bigr).\]
On the other hand, if $j$ is odd, then
\[
\nu_2(r^j+1)=2+\lambda,
\quad\text{and}\quad
\nu_2(r^{i+j}-1)=
\begin{cases}
2+\lambda+\nu_2(i+j) & \text{if } i \text{ is odd},\\
1 & \text{if } i \text{ is even}.
\end{cases}
\]
Consequently,
\[
\nu_2\left(\frac{(r^{i+j}-1)(r^j+1)}{r^2-1}\right)=
\begin{cases}
1+\lambda+\nu_2(i+j) & \text{if } i \text{ is odd},\\
0 & \text{if } i \text{ is even}.
\end{cases}
\]
Similarly, if $j$ is even, then $i$ must be odd (since at least one of $i$ and $j$ is odd), and
\[
\nu_2\left(\frac{(r^{i+j}+1)(r^j-1)}{r^2-1}\right)
=1+\lambda+\nu_2(j).
\]
Combining the above computations, we obtain
\[g^{2^s}=(a^i(ax)^j)^{2^s}=\begin{cases}
a^{2^s(i+j)} v^{2^{s+\lambda+\nu_2(i+j)}u_1} & \text{if } j \text{ is odd and } i \text{ is odd},\\
a^{2^s(i+j)} v^{2^{s-1} u_2} & \text{if } j \text{ is odd and } i \text{ is even},\\
a^{2^s(i+j)} v^{2^{s+\lambda+\nu_2(j)}u_3} & \text{if } j \text{ is even and } i \text{ is odd}.
\end{cases},\]
where $u_1,u_2,u_3$ are odd integers. This completes the proof. \qed

The following lemma determines the set \(\Omega\) of exact bicyclic generating
pairs of \(G=G(\eta,t,\delta)\).

\begin{lemma}\label{lem:generating-pairs-second}
Let \(G=G(\eta,t,\delta)\) be as in
Lemma~\ref{lem:cyclic-commutator}(2), and let
\[
\Omega=\{(b,w)=(a^i(ax)^j,a^k(ax)^{\ell}):
o(b)=2^e,\ o(w)=2^f,\ \l b\r\l w\r=G,\ \l b\r\cap\l w\r=1\}
\]
be the set of all exact bicyclic generating pairs of type \((2^e,2^f)\) of
\(G\), where \(e\ge f\ge2\). Then
\[
(e,f)=(n+1,m)
\quad\text{with}\quad
m-1\le n\le m+\lambda+1,
\]
where \(\lambda=\nu_2(t)\), and the following statements hold.
\begin{itemize}
\item[(1)] If \(\eta=0\) and $n=m-1$, then $|\Omega|=2^{2m+2n-1}=2^{4m-3}.$
\item[(2)] If \(\eta=0\) and $m\le n\le m+\lambda+1$, then
$|\Omega|=2^{2n+2m-2}.$
\item[(3)] If \(\eta=1\), then $n=m$ and $|\Omega|=2^{2m+2n-2}.$
\end{itemize}
\end{lemma}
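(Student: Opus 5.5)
Write $g=a^i(ax)^j$ as in Lemma~\ref{lem:power-formula}, with $b=a^i(ax)^j$ and $w=a^k(ax)^\ell$. The plan is to read off orders from the power formulas (4.1)--(4.2). The first step is to find the orders of $a$ and $ax$. We have $o(a)=2^{m+\eta}$. From (4.4), $(ax)^{2^s}=x^{2^s}a^{2^s}v^{2^{s-1}u}$ with $u$ odd, so $(ax)^{2^m}=a^{2^m}v^{2^{m-1}u}$ and $o(ax)=\max\{2^{m+\eta},2^{n+1}\}$ up to the interaction with $z^\eta$. Combined with Lemma~\ref{exponent}, $|G|=2^{m+n+1}=2^{e+f}$ and $e\ge f$, this should force $2^e=\exp(G)=\max\{o(a),o(ax)\}$. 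If $n+1\ge m$ then $e=n+1$ and $f=m$. If instead $m+\eta>n+1$, then $e=m+\eta$ and $f=n+1-\eta$. I will exclude this second possibility by showing that any element of order $2^e$ then has a $2^{e-1}$-th power lying in $\langle a^{2^{m-1}}\rangle$, and that every candidate $w$ meets it. Together with Lemma~\ref{parameter-relation} this gives $(e,f)=(n+1,m)$ and $m-1\le n\le m+\lambda+1$.

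Next I classify elements by parity of $(i,j)$ modulo $\Phi(G)=G'\langle a^2\rangle$. The formulas (4.1)--(4.2) show the following.
\begin{itemize}
\item[(a)] When $i$ is even and $j$ odd, $g^{2^s}=a^{2^s(i+j)}v^{2^{s-1}u_2}$, so $g$ has the maximal order $2^{n+1}$, with top power essentially $v^{2^{n-1}}=z$ (possibly times a power of $a$).
\item[(b)] When $i$ is odd, the $v$-part has valuation at least $s+\lambda+\nu_2(\cdot)$. The order of $g$ is then governed by $a^{2^s(i+j)}$ and by $\nu_2(i+j)$ or $\nu_2(j)$.
\end{itemize}
By Lemma~\ref{lem:generating-matrix}, $b$ and $w$ must have linearly independent images in $G/\Phi(G)$. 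The exactness condition $\langle b\rangle\cap\langle w\rangle=1$ is equivalent to the two unique involutions $b^{2^{e-1}}$ and $w^{2^{f-1}}$ being distinct. Both involutions lie in the centre-type subgroup $\langle z, a^{2^{m-1+\eta}}\rangle$, which is elementary abelian of order at most $4$. So the conditions become explicit congruences on $(i,j,k,\ell)$ modulo suitable powers of $2$.

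Then I count in cases.
\begin{itemize}
\item[(1)] For $\eta=0$ and $n=m-1$, we have $e=f=m$. Both $b$ and $w$ have maximal order, so both can be taken from the classes with $o=2^m$, and either assignment of the parity classes works. This extra symmetric freedom should explain the factor $2^{2m+2n-1}$ rather than $2^{2m+2n-2}$.
\item[(2)] For $\eta=0$ and $n\ge m$, $b$ must be of type (a), or of the odd type with large $\nu_2$, so that it has order $2^{n+1}$. Its involution is $z$, while $w$ of order $2^m$ must have involution $a^{2^{m-1}}v^{\ast}\ne z$. Counting the admissible residues of $i,j$ modulo $o(a)$ and $o(ax)$ gives $|\Omega|=\tfrac14|G|^2=2^{2m+2n}/4$.
\item[(3)] For $\eta=1$, $a^{2^m}=z$ identifies the two candidate involutions unless the orders balance. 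I expect this to force $n=m$, with the same count $2^{2m+2n-2}$.
\end{itemize}

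The main obstacle is the exact counting. The power formula gives the order of $g$ only via a $2$-adic valuation depending on $\nu_2(i+j)$ and $\nu_2(j)$, and the unique involution of $\langle g\rangle$ must be identified precisely, including the odd units $u_1,u_2,u_3$ and the sign $z^\eta$. I would first compute, for each parity class, the pair (order of $g$, involution of $\langle g\rangle$) as an explicit function of residues. Then I would count fibres: each residue class modulo $\Phi(G)$ contributes $|\Phi(G)|=2^{m+n-1}$ elements, and the involution condition should cut out exactly half in the relevant cases.
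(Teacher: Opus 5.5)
\textbf{Verdict: there is a genuine gap.} Your outline works for $\eta=0$ and for excluding the branch $m+\eta>n+1$. However, part (3) is not proved, and the reduction you propose for it is false.

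\textbf{The case $\eta=1$.} Here $a^{2^m}=z$, so $\langle z,a^{2^{m-1+\eta}}\rangle=\langle z\rangle$. If both involutions $b^{2^{e-1}},w^{2^{f-1}}$ lay in this subgroup they would coincide, and $\Omega$ would be empty, contradicting (3). Your description (a) and your formula for $o(ax)$ also break down in exactly this case. For $m=n$ and $i$ even, $j$ odd, one has $(a^i(ax)^j)^{2^n}=a^{2^n(i+j)}v^{2^{n-1}u_2}=z\cdot z=1$, and $o(ax)=2^n$. What actually happens for $m=n$ is that $b\in a\Phi(G)$, with order $2^{n+1}$ and involution $z$. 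Meanwhile $w\in ax\Phi(G)$ has involution $a^{2^{n-1}(k+\ell)}v^{2^{n-2}u_2}\notin\langle z\rangle$ (for $n\ge3$).

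More seriously, ``I expect this to force $n=m$'' skips the one delicate step: ruling out $\eta=1$, $n>m$. This case lies in your first branch $(e,f)=(n+1,m)$, not in the branch you planned to exclude.
\begin{itemize}
\item Elements with $i$ odd satisfy $g^{2^n}=1$. So $b$ must have $i$ even, $j$ odd, and its involution is $z$.
\item By the generating criterion, $w$ must then have $k$ odd.
\item If $\ell$ is odd, then $w^{2^{m-1}}\in\langle v\rangle$, so $z\in\langle w\rangle$ and the intersection is nontrivial.
\item If $\ell$ is even, then $w^{2^m}=z\,v^{2^{m+\lambda+\nu_2(\ell)}u_3}$. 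This equals $z\neq 1$ because Lemma~\ref{parameter-relation} gives $m+\lambda+1\ge n$ (for $\lambda\ge n-2$ there is no $v$-factor at all). So $w$ has the wrong order.
\end{itemize}
Using the bound $n\le m+\lambda+1$ to kill every candidate $w$ is the missing idea.

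\textbf{The counts.} These are asserted rather than derived. Since $|G|=2^{m+n+1}$, you have $\tfrac14|G|^2=2^{2m+2n}$, which is four times the value in (2). Your closing heuristic that the involution condition ``cuts out exactly half'' is also not what happens: nothing is cut out. For $\eta=0$, $n\ge m$:
\begin{itemize}
\item Every element of $ax\Phi(G)$ has order $2^{n+1}$ and involution $z$.
\item Every element of $a\Phi(G)$ has order $2^m$ and involution $a^{2^{m-1}}$ or $a^{2^{m-1}}z$. Checking $w^{2^m}=1$ here again uses $n\le m+\lambda+1$.
\item Elements with $i$ odd satisfy $g^{2^n}=1$, so your ``odd type with large $\nu_2$'' must be ruled out, not allowed.
\item Elements of $x\Phi(G)$ have their involution in $\langle v\rangle$, hence equal to $z$, so they are excluded.
\end{itemize}
Hence $|\Omega|=|\Phi(G)|^2=|G|^2/16$ in (2). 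The same holds in (3), with the roles of the two cosets swapped. In (1), both $a\Phi(G)$ and $ax\Phi(G)$ consist of elements of order $2^m$, with the distinct involutions $a^{2^{m-1}}$ and $a^{2^{m-1}}z$. Both orderings are therefore admissible, giving $2|\Phi(G)|^2$.

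Once these facts are established, your coset count is a cleaner alternative to the paper's enumeration of exponent tuples, since it needs neither $o(ax)$ nor uniqueness of the normal form $a^i(ax)^j$. As written, though, the three cardinalities and the equality $n=m$ in (3) remain unproved.
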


\proof Let $(b,w)=(a^i(ax)^j,a^k(ax)^{\ell})\in\Omega$, where
\(1\le i,k\le o(a)\) and \(1\le j,\ell\le o(ax)\).
Then $(b,w)$ is a generating pair of $G$ such that $o(b)=2^e$, $o(w)=2^f$, $\langle b\rangle\langle w\rangle=G$, and $\langle b\rangle\cap\langle w\rangle=1$.
Hence $2^{e+f}=|G|=2^{m+n+1}$, so $e+f=m+n+1$.
It follows from Lemma~\ref{lem:generating-matrix} that the parameters $i,j,k,\ell$ satisfy
$$\left| \begin{array}{cc}
    i& j  \\
    k & \ell  \\
    \end{array}\right|\not\equiv 0\ (\mbox{mod}~2).\eqno(4.6)$$
To determine the size of $\Omega$, we need to clarify the ranges of the parameters $i,j,k,\ell$.

To apply (4.1) and (4.2) below, we need \(e-1\ge2\), i.e. \(e\ge3\).
In fact, we show the stronger inequality \(e\ge n+1\). By
Lemma~\ref{exponent}, \(\exp(G)=2^e\), so it suffices to prove
\(\exp(G)\ge2^{n+1}\). If \(a^{2^n}=1\), then \(x^{2^n}=1\) and (4.4)
give
\[
(ax)^{2^n}=v^{2^{n-1}u}\ne1.
\]
Otherwise, \(o(a)>2^n\). Since \(G\) is a \(2\)-group, either case gives
\(\exp(G)\ge2^{n+1}\). Therefore \(e\ge n+1\ge3\), so (4.1) and (4.2)
apply in all calculations below.

Suppose first that $j$ and $i$ are both odd. Then $i+j$ is even.
According to Eqs.~(4.1) and (4.2), we have
\[b^{2^{e-1}}=(a^i(ax)^j)^{2^{e-1}}=\begin{cases}
a^{2^{e-1}(i+j)}=1 & \text{if } \lambda\ge n-2,\\
a^{2^{e-1}(i+j)}v^{2^{e-1+\lambda+\nu_2(i+j)}u_1}=1
  & \text{if } \lambda\le n-3.
\end{cases},\]
a contradiction. Therefore, $i$ and $j$ have different parity.

Since \(a^{2^m}=z^\eta\) and \(|Z/(Z\cap N)|=2^m\), the two possible
intersections are distinguished by the value of \(\eta\). We treat them separately.

\vskip0.1in
\textbf{Case 1:} \(\eta=0\), equivalently \(N\cap Z=1\).
\vskip0.1in
Since $|Z/(Z\cap N)|=2^m$ with $m>1$, it follows that $Z=\l a\r\cong\ZZ_{2^m}$, and by Lemma~\ref{exponent},  we have $e\geq \max\{m,n\}$ and $f\leq \min\{n+1,m+1\}$.
\vskip0.1in
\textbf{Case 1.1. Suppose first that $i$ is even and $j$ is odd.}
It follows from Eq.~(4.6) that $k$ is odd, and by Eqs.~(4.1)--(4.2) we have
\[1=b^{2^{e}}=(a^i(ax)^j)^{2^{e}}=
\begin{cases}
a^{2^{e}(i+j)}v^{2^{e-1}j}=v^{2^{e-1}j}
  & \text{if } \lambda\ge n-2,\\
a^{2^{e}(i+j)}v^{2^{e-1}u_2}=v^{2^{e-1}u_2}
  & \text{if } \lambda\le n-3.
\end{cases}\]
and
\[1\neq b^{2^{e-1}}=(a^i(ax)^j)^{2^{e-1}}=\begin{cases}
a^{2^{e-1}(i+j)}v^{2^{e-2}j}
  & \text{if } \lambda\ge n-2,\\
a^{2^{e-1}(i+j)}v^{2^{e-2}u_2}
  & \text{if } \lambda\le n-3.
\end{cases},\]
where $u_2$ is odd.
The first equation implies that $e\ge n+1$, and hence $f\le m\le e$.
The inequality in the second equation means that either $a^{2^{e-1}}\neq 1$ or $v^{2^{e-2}}\neq 1$; equivalently, $e\le m$ or $e\le n+1$.
Thus we have
\[(e,f)=(m,n+1)~~\text{or}~~ (e,f)=(n+1,m).\]
Assume that $(e,f)=(m,n+1)$. Then $m\ge n+1$ since $e\ge f$.
As $k$ is odd and by Eqs.~(4.1)--(4.2) we have
$$1\ne b^{2^{e-1}}=(a^i(ax)^j)^{2^{m-1}}
=a^{2^{m-1}}v^{2^{m-2}u_2}$$
and
$$1\neq w^{2^{f-1}}=(a^k(ax)^{\ell})^{2^n}=a^{2^n(k+\ell)}\in \l a\r.$$
Since $\langle b\rangle\cap\langle w\rangle=1$, it follows that
$\l b^{2^{e-1}}\r\cap\l w^{2^{f-1}}\r=1$, and so
$v^{2^{m-2}u_2}\ne1$, which implies that $m-2<n$, i.e., $m\le n+1$.
Thus, we have $m=n+1$ and $e=n+1=m=f$. In conclusion, $$(e,f)=(n+1,m), ~\mbox{and}~n+1\geq m.$$
In this case, Eqs.~(4.1)--(4.2) yield
\[1\neq b^{2^{e-1}}=(a^i(ax)^j)^{2^{n}}=a^{2^{n}}v^{2^{n-1}}.\]

If $\lambda\ge n-2$, then by Eq.~(4.1) we have
\[w^{2^{f-1}}=(a^k(ax)^\ell)^{2^{m-1}}=a^{2^{m-1}(k+\ell)}\neq 1,\]
which is disjoint from $\langle b^{2^{e-1}}\rangle$; hence $k+\ell$ is odd, i.e., $k$ is odd and $\ell$ is even.

If $\lambda\le n-3$, we can also deduce that $k+\ell$ is odd. Indeed,
suppose for contradiction that $k+\ell$ is even. Then by Eq.~(4.2),
\[1\neq w^{2^{f-1}}=a^{2^{m-1}(k+\ell)}v^{2^{m-1+\lambda+\nu_2(k+\ell)}u_1}
=v^{2^{m-1+\lambda+\nu_2(k+\ell)}u_1}\in \langle v\rangle.\]
Since $\langle b\rangle\cap\langle w\rangle=1$, we have $b^{2^{e-1}}=a^{2^{n}}v^{2^{n-1}}\notin \langle v\rangle$. This implies $a^{2^n}\neq 1$, hence $n\le m-1$.
Moreover, from the previous condition we must have $n+1=m$.
But in this case,
\[w^{2^{f-1}}=v^{2^{m-1+\lambda+\nu_2(k+\ell)}u_1}=1,\]a contradiction.
Therefore, $k+\ell$ is odd; that is, $k$ is odd and $\ell$ is even.
By Lemma~\ref{parameter-relation}, \(m+\lambda+1\ge n\), and consequently
\[w^{2^f}=(a^k(ax)^\ell)^{2^{m}}
=a^{2^{m}(k+\ell)}v^{2^{m+\lambda+\nu_2(\ell)}u_3}
=v^{2^{m+\lambda+\nu_2(\ell)}u_3}=1.\]

\vskip0.1in
\textbf{Case 1.2. Suppose now that $i$ is odd and \(j\) is even.} It follows
from Eq.~(4.6) that $\ell$ is odd, and by Lemma~\ref{exponent} and
Eqs.~(4.1)--(4.2) we have
\[1\ne b^{2^{e-1}}=(a^i(ax)^j)^{2^{e-1}}=a^{2^{e-1}(i+j)}\in \l a\r,\]
which means that $e=m$, and so $(e,f)=(m,n+1)$, where $m\geq n+1$ because $e\geq f$.  In this case,  $$b^{2^{e-1}}=(a^i(ax)^j)^{2^{m-1}}=a^{2^{m-1}}.$$
Suppose that $k$ is odd. Then by Eqs.~(4.1)--(4.2), we have
\[1\neq w^{2^{f-1}}=(a^k(ax)^\ell)^{2^{n}}=a^{2^{n}(k+\ell)}\in \l a\r,\]
which contradicts the fact that $\langle b\rangle\cap\langle w\rangle=1$. Thus, $k$ is even and $\ell$ is odd. Under this circumstance,
\[1= w^{2^{f}}=(a^k(ax)^\ell)^{2^{n+1}}=a^{2^{n+1}(k+\ell)}v^{2^{n}}=a^{2^{n+1}(k+\ell)},\]
which implies that $n+1\geq m$, and so $m=n+1$.
\vskip0.1in
From the discussion of the above two cases, we obtain
\[
(e,f)=(n+1,m),\qquad\text{where }n+1\ge m.
\]
Then, from Eq.~(4.4), it follows immediately that $o(ax)=2^{n+1}$ and so $1\le i,k\le 2^m$ and $1\le j,\ell\le 2^{n+1}$.
Consequently, the possible values of $i,j,k,\ell$ can be classified into the following categories.
\begin{itemize}
\item[(i)] If $n+1=m$, i.e., $e=f$, then either $i,\ell$ are both even and $j,k$ are both odd, or $i,\ell$ are both odd and $j,k$ are both even.
Thus\[|\Omega|=2\cdot(2^{m-1}\cdot 2^{m-1}\cdot 2^{n}\cdot 2^{n})=2^{4m-3},\]
as stated in part (1).
\item[(ii)] If $m-1<n\le m+\lambda+1$, then $i,\ell$ are both even and $j,k$ are both odd. That is, $i,j,k,\ell$ have $2^{m-1}$, $2^n$, $2^{m-1}$, and $2^n$ possible choices, respectively.
Hence\[|\Omega|=2^{m-1}\cdot 2^{m-1}\cdot 2^{n}\cdot 2^{n}=2^{2m+2n-2},\]
as stated in part (2).
\end{itemize}

\vskip0.1in
\textbf{Case 2:} \(\eta=1\), equivalently \(N\cap Z=\langle z\rangle\).
\vskip0.1in
Since $|Z/(Z\cap N)|=2^m$ with $m>1$, it follows that $Z=\l a\r\cong\ZZ_{2^{m+1}}$ and $z=a^{2^m}=v^{2^{n-1}}.$
Also, from $2^{e+f}=|G|=2^{m+n+1}$ and $\exp(G)=2^e$, we get
$$m+n+1=e+f,\quad\text{where }e\ge \max\{m+1,n\}\text{ and }f\le \min\{n,m+1\}.$$
Since $i$ and $j$ have different parity, we treat the two cases separately.

\vskip0.1in
\textbf{Case 2.1. Suppose first that $i$ is even and \(j\) is odd.} It follows
from Eq.~(4.6) that $k$ is odd, and by Eqs.~(4.1)--(4.2),
$$b^{2^e}=(a^i(ax)^j)^{2^e}=a^{2^e(i+j)}v^{2^{e-1}u_2}
=v^{2^{e-1}u_2}=1,$$
so $e\ge n+1$.

If $e>n+1$, then $e-2\geq n$ and it follows from Eqs.~(4.1)--(4.2) that
$$1\ne b^{2^{e-1}}=a^{2^{e-1}(i+j)}v^{2^{e-2}u_2}
=a^{2^{e-1}(i+j)},$$
hence $e-1\leq m$. Thus $e=m+1$ and $f=n$ with $m>n$. In this case, $b^{2^{e-1}}=a^{2^m}\in\langle a\rangle$.
On the other hand, since $k$ is odd and using Eqs.~(4.1)--(4.2), we have
$$1\neq w^{2^{f-1}}=(a^k(ax)^{\ell})^{2^{n-1}}=a^{2^{n-1}(k+\ell)}\in \l a\r,$$
so $\l w^{2^{f-1}}\r\cap \l b^{2^{e-1}} \r\neq 1$, contradicting $\langle w\rangle\cap\langle b\rangle=1$.
 Therefore, $$e=n+1~\mbox{and}~f=m.$$

Now $2^{n+1}=2^e=\exp(G)\ge o(a)=2^{m+1}$, so $n\ge m$. If $n=m$, then
$$b^{2^{e-1}}=(a^i(ax)^j)^{2^n}=a^{2^n(i+j)}v^{2^{n-1}}=a^{2^m}a^{2^m}=1,$$
a contradiction. Hence $n>m$, and we have
$$(e,f)=(n+1,m)\quad\text{with }n>m.$$
In particular, $b^{2^{e-1}}=(a^i(ax)^j)^{2^n}=v^{2^{n-1}}\in\langle v\rangle$.

Suppose that $\ell$ is odd. Then $k+\ell$ is even, and hence $$a^{2^{m-1}(k+\ell)}=1~~\mbox{or}~~a^{2^{m-1}(k+\ell)}=a^{2^m}=v^{2^{n-1}}.$$
Since $k$ is odd, Eqs.~(4.1)--(4.2) yield
\[1\ne w^{2^{f-1}}=(a^k(ax)^{\ell})^{2^{m-1}}=\begin{cases}
a^{2^{m-1}(k+\ell)} & \text{if } \lambda\ge n-2,\\
a^{2^{m-1}(k+\ell)}v^{2^{m-1+\lambda+\nu_2(k+\ell)}u_1}
  & \text{if } \lambda\le n-3.
\end{cases}\]
In either case,  $$1\ne w^{2^{f-1}}\in \l v\r,$$ contradicting $\langle b\rangle\cap\langle w\rangle=1$. Thus $\ell$ is even.

With $\ell$ even, we have
\[1=w^{2^f}=(a^k(ax)^\ell)^{2^m}=\begin{cases}
a^{2^m(k+\ell)}\ne1 & \text{if } \lambda\ge n-2,\\
a^{2^m(k+\ell)}v^{2^{m+\lambda+\nu_2(\ell)}u_3}
  =v^{2^{n-1}+2^{m+\lambda+\nu_2(\ell)}u}
  & \text{if } \lambda\le n-3.
\end{cases}\]
where \(u\) is odd. Hence \(\lambda\le n-3\) and
\[
2^{n-1}+2^{m+\lambda+\nu_2(\ell)}u\equiv0\pmod{2^n}.
\]
By Lemma~\ref{parameter-relation}, \(m+\lambda\ge n-1\), which means that
\(1+2^{m+\lambda+\nu_2(\ell)-(n-1)}u\equiv0\pmod2\), impossible.
Therefore Case 2.1 cannot occur.

\vskip0.1in
\textbf{Case 2.2. Suppose now that $i$ is odd and \(j\) is even.} It follows
from Eq.~(4.6) that $\ell$ is odd, and by Lemma~\ref{exponent}, we have
\(e\geq \max\{m+1,n\}\).
From Eqs.~(4.1)--(4.2), we obtain
\[
1\neq b^{2^{e-1}}=(a^i(ax)^j)^{2^{e-1}}=a^{2^{e-1}(i+j)},
\]
which implies \(e-1\leq m\), and hence \(e=m+1\) and \(f=n\).
Consequently, we have $b^{2^{e-1}}=a^{2^m}$.

Assume that $k$ is odd. Then $k+\ell$ is even and by
Eqs.~(4.1)--(4.2) we have
\[
1\neq w^{2^{f-1}}=(a^k(ax)^\ell)^{2^{n-1}}
=a^{2^{n-1}(k+\ell)}\in \l a\r,
\]
and so $\l b^{2^{e-1}}\r\cap \l w^{2^{f-1}}\r\neq 1$,
a contradiction. Thus, $k$ is even.

Since $$w^{2^f}=(a^k(ax)^\ell)^{2^{n}}=a^{2^n(k+\ell)}v^{2^{n-1}}=a^{2^n(k+\ell)}a^{2^m}=1,$$
it follows that $$2^n(k+\ell)+2^m\equiv 0~(\mbox{mod}~2^{m+1}),$$ and so $m=n.$ In this case, we have $$(e,f)=(n+1,m) ~\mbox{with}~m=n.$$
By Eq.~(4.4), $(ax)^{2^n}=x^{2^n}a^{2^n}v^{2^{n-1}u}=a^{2^m}v^{2^{n-1}}=1$, so $o(ax)=2^n$ and
\[\begin{array}{llll}
\Omega=\{(a^i(ax)^j,a^k(ax)^{\ell})&|&1\leq i,k\leq 2^{m+1}, 1\leq j,\ell\leq 2^{n},\\
                                   &~&j,k~\mbox{are both even},i,\ell~\mbox{are both odd}\},
\end{array} \] with cardinality $|\Omega|=2^{2n+2m-2}$,  as in part (3).
\qed

\vskip0.2in
\subsection{The automorphisms of \(G(\eta,t,\delta)\)}~~~~~~~~~~~~~~~~~~~~~~~
\vskip0.15in
Next, we determine the order of the automorphism group of
\(G=G(\eta,t,\delta)\).
Our calculation will use the first isomorphism theorem for the natural map
\(\Aut(G)\to\Aut(G/\langle v^2\rangle)\). The quotient \(G/\langle v^2\rangle\)
is isomorphic to a fixed group of order \(2^{m+2}\), and the image of
\(\Aut(G)\) embeds into the automorphism group of this quotient. We therefore
first determine the automorphism group of this model group.
Throughout this subsection, put \(\lambda=\nu_2(t)\), with the convention
\(\nu_2(0)=+\infty\).

\begin{lemma}\label{lem:aut-auxiliary}
Let $m>1$ and let
$H=\l x,y\mid x^{2^m}=y^2=z^2=1,\ [x,y]=z,\ [x,z]=[y,z]=1\r$.
Then $|\Aut(H)|=2^{m+3}$.
\end{lemma}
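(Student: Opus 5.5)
We count automorphisms by counting images of the generators, exactly as in Lemma~\ref{lem:aut-G0}. The group $H$ has order $2^{m+2}$. Its center is $\Z(H)=\l x^2,z\r$, since $y$ inverts nothing but $[x,y]=z$ is central. Also $H'=\l z\r$ and $\Phi(H)=\l x^2,z\r$. Since $H$ has class $2$, von Dyck's theorem shows that an assignment $x\mapsto X$, $y\mapsto Y$ extends to an endomorphism precisely when
\[
X^{2^m}=1,\qquad Y^2=1,\qquad [X,Y]^2=1,\qquad [X,Y]\in\Z(H).
\]
The last two conditions are automatic, because every commutator lies in $H'=\l z\r\le\Z(H)$. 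Such an endomorphism is an automorphism exactly when $\l X,Y\r=H$, that is, when $\ov X,\ov Y$ form a basis of $H/\Phi(H)\cong\ZZ_2^2$, as in Lemma~\ref{lem:generating-matrix}. Hence $|\Aut(H)|$ equals the number of pairs $(X,Y)$ with $Y^2=1$ and $\{\ov X,\ov Y\}$ a basis; the condition $X^{2^m}=1$ holds because $\exp(H)=2^m$.

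\emph{Step 1: normal form and squaring.} Every element of $H$ can be written uniquely as $x^iy^\epsilon z^\gamma$ with $0\le i<2^m$ and $\epsilon,\gamma\in\{0,1\}$. Using $yx=xyz$, we get
\[
(x^iy)^2=x^i\,(yx^iy)=x^i\,x^iz^i=x^{2i}z^i.
\]
Thus $x^iy^\epsilon z^\gamma$ squares to $x^{2i}z^{\epsilon i}$. Its square is $1$ exactly when $2i\equiv0\pmod{2^m}$ and $\epsilon i$ is even. This gives $i\in\{0,2^{m-1}\}$, and since $m>1$ both values are even, so the second condition is automatic. The elements with square $1$ are therefore $x^iy^\epsilon z^\gamma$ with $i\in\{0,2^{m-1}\}$, which is $8$ elements including the identity.

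\emph{Step 2: counting.} Every square-$1$ element has $i$ even, so its image in $H/\Phi(H)$ is either $\ov y$ or trivial. For $\ov Y$ to be part of a basis we need $\epsilon=1$. This leaves $4$ choices for $Y$, namely $x^{i}yz^\gamma$ with $i\in\{0,2^{m-1}\}$ and $\gamma\in\{0,1\}$, and all of them give $\ov Y=\ov y$. Then $X$ must satisfy $\ov X\notin\l\ov y\r$, which means $\ov X\in\{\ov x,\ov x\,\ov y\}$. This is two of the four cosets of $\Phi(H)$, so there are $2\cdot|\Phi(H)|=2\cdot 2^m=2^{m+1}$ choices for $X$. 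Hence
\[
|\Aut(H)|=4\cdot 2^{m+1}=2^{m+3}.
\]

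\emph{Main obstacle.} The only point needing care is checking that the relation set above really suffices for $H$, so that no extra condition like $[X,Z]=1$ or the order of $[X,Y]$ imposes further restrictions. This follows because $H'\le\Z(H)$ has exponent $2$. I would verify it with the identity $[X,Y]\in\l z\r$ before counting.
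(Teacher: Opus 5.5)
Your proof is correct and follows essentially the same route as the paper. Both arguments use the normal form $x^iy^\epsilon z^\gamma$ and the squaring formula to show that $y$ must map to one of the four elements $x^{2^{m-1}i_1}yz^{k_1}$ and $x$ to one of the $2^{m+1}$ elements $x^iy^jz^k$ with $i$ odd, giving $2^{m+1}\cdot 4=2^{m+3}$. The paper forces $j_1=1$ through $[\sigma(x),\sigma(y)]=z$ and leaves bijectivity implicit, whereas your appeal to von Dyck's theorem and the Burnside basis theorem makes explicit that every admissible assignment is an automorphism. Your justification of $\Z(H)=\langle x^2,z\rangle$ is garbled, but that claim is never used.
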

\proof Since \(\langle x,z\rangle=\langle x\rangle\times\langle z\rangle\) is
normal in \(H\), every element of \(H\) can be written uniquely as
\(h=x^iy^jz^k\), where \(1\le i\le2^m\) and \(j,k\in\{0,1\}\).
Then
\[
h^2=(x^iy^jz^k)^2=(x^iy^j)^2
=\begin{cases}
x^{2i} & \text{if } i \text{ is even, or } i \text{ is odd and } j=0,\\
x^{2i}z & \text{if } i \text{ is odd and } j=1.
\end{cases}
\]
For \(s\ge2\),
\[
h^{2^s}=x^{2^si}.
\]
Since \(m>1\), all elements of order \(2^m\) in \(H\) are of the form
\(x^iy^jz^k\) with \(1\le i\le2^m\) odd and \(j,k\in\{0,1\}\),
while all involutions in \(H\) are of the form
\(x^{2^{m-1}i_1}y^{j_1}z^{k_1}\), where
\(i_1,j_1,k_1\in\{0,1\}\) and \((i_1,j_1,k_1)\ne(0,0,0)\).

Since \(\langle z\rangle\) is a characteristic subgroup of \(H\), every
automorphism \(\sigma\in\Aut(H)\) has the form
\[\sigma:\left\{
\begin{array}{ll}
x \mapsto x^iy^jz^k, & 1\le i\le2^m\text{ odd},\quad j,k\in\{0,1\},\\
y \mapsto x^{2^{m-1}i_1}y^{j_1}z^{k_1},& i_1,j_1,k_1\in\{0,1\},\\
z \mapsto z.
\end{array}\right.\]

Because \(z=\sigma(z)=[\sigma(x),\sigma(y)]\), we compute
\[
[\sigma(x),\sigma(y)]
=[x^iy^jz^k,x^{2^{m-1}i_1}y^{j_1}z^{k_1}]
=[x^iy^j,y^{j_1}]\ne1.
\]
This forces \(j_1=1\). The remaining relations
\([\sigma(x),\sigma(z)]=[\sigma(y),\sigma(z)]=1\) are then automatically
satisfied.

Therefore, the standard form of $\sigma$ is
\[
\sigma:\left\{
\begin{array}{ll}
x \mapsto x^iy^jz^k, & 1\le i\le2^m\text{ odd},\quad j,k\in\{0,1\},\\
y \mapsto x^{2^{m-1}i_1}yz^{k_1}, & i_1,k_1\in\{0,1\},\\
z \mapsto z.
\end{array}\right.
\]
Distinct choices of \(i,j,k,i_1,k_1\) give distinct automorphisms. Hence
\[
|\Aut(H)|=2^{m-1}\cdot2^4=2^{m+3}.
\]
\qed

Now we determine the order of the automorphism group of $G=G(\eta,t,\delta)$.

\begin{theorem}\label{thm:aut-Geta}
Let \(G=G(\eta,t,\delta)\) be as in Lemma~\ref{lem:cyclic-commutator}(2).
Then
\begin{itemize}
\item[(1)] If \(\lambda\ge n-2\), then
\[
|\Aut(G)|=
\begin{cases}
2^{2n+m},&\eta=0,\\
2^{2n+m-1},&\eta=1.
\end{cases}
\]
\item[(2)] If \(\lambda\le n-3\), then
\(\lambda+3\le n\le m+\lambda+1\), and
\[
|\Aut(G)|=
\begin{cases}
2^{2n+1},&n=m+\lambda+1,\ \eta=0,\\
2^{2n+2},&n=m+\lambda+1,\ \eta=1,\\
2^{n+m+\lambda+3},&
  \lambda+3\le n\le m+\lambda,\ \eta=0,\\
2^{n+m+\lambda+2},&
  \lambda+3\le n\le m+\lambda,\ \eta=1.
\end{cases}
\]
\end{itemize}

\end{theorem}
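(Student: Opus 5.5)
Every automorphism $\sigma$ of $G=\l a,x\r$ is determined by the pair $(\sigma(a),\sigma(x))$. So I will count the pairs $(\alpha,\xi)\in G\times G$ that satisfy the defining relations of $G(\eta,t,\delta)$ with $\alpha$ in place of $a$ and $\xi$ in place of $x$, and that generate $G$. By von Dyck's theorem such a pair gives a surjective endomorphism, hence an automorphism because $G$ is finite. Following the announced strategy, I will organise the count through the natural map $\pi:\Aut(G)\to\Aut(\ov G)$, where $\ov G=G/\l v^2\r$. Since $\l v^2\r=\mho_1(G')$ is characteristic, $\pi$ is well defined. The quotient $\ov G$ is generated by $\ov a,\ov x$ with $[\ov a,\ov x]=\ov v$ central of order $2$, $\ov x^2=1$ and $o(\ov a)=2^m$, so $\ov G\cong H$ and Lemma~\ref{lem:aut-auxiliary} gives $|\Aut(\ov G)|=2^{m+3}$. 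Then $|\Aut(G)|=|\Ker\pi|\cdot|\mathrm{Im}\,\pi|$.

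\textbf{The kernel.} An element $\sigma\in\Ker\pi$ has the form $a\mapsto a v^{2\alpha}$, $x\mapsto x v^{2\beta}$. First, $[av^{2\alpha},xv^{2\beta}]$ is a generator of $G'$, and the relations $v^x=v^{-1}$ and $v^a=v^{r}$ then hold automatically for it. Next, $(xv^{2\beta})^2=x^2$ always holds. By (4.3), $(av^{2\alpha})^{2^m}=a^{2^m}v^{2\alpha(r^{2^m}-1)/(r-1)}$, so the relation $\alpha'^{2^m}=z^\eta$ for $\alpha'=av^{2\alpha}$ reduces to a $2$-adic condition on $2\alpha\,(r^{2^m}-1)/(r-1)$ modulo $2^n$. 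By the valuations computed in Lemma~\ref{parameter-relation}, this quantity has valuation at least $m+\lambda+1+1\ge n$ when $\lambda\le n-3$, and the case $r\equiv-1$ is similar. So the condition is vacuous and $|\Ker\pi|=2^{n-1}\cdot2^{n-1}=2^{2n-2}$. I expect this count to hold in all cases; it must be double-checked at the boundary $n=m+\lambda+1$.

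\textbf{The image.} This is the main obstacle. By Lemma~\ref{lem:aut-auxiliary}, the automorphisms of $\ov G$ are given by $\ov a\mapsto \ov a^{i}\ov x^{j}\ov v^{k}$ with $i$ odd, and $\ov x\mapsto \ov a^{2^{m-1}i_1}\ov x\,\ov v^{k_1}$. For each such choice I will decide whether it lifts. I lift to $\alpha=a^ix^jv^{k}$ and $\xi=a^{2^{m-1}i_1}xv^{k_1}$ (the $v$-parts are only relevant modulo the kernel) and test the relations:
\begin{itemize}
\item[(a)] $\xi^2=z^\delta$. Use the computation of $(a^Mxv^\rho)^2$ from Lemma~\ref{lem:iso-delta}, which gives $z^{\eta+\delta+2^{m+\lambda-n+1}}$ when $i_1=1$. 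So $i_1=1$ is allowed iff $\eta\equiv 2^{m+\lambda-n+1}\pmod 2$.
\item[(b)] $\alpha^{2^m}=z^\eta$. Compute this via Lemma~\ref{lem:power-formula}. When $j=1$, $\alpha=a^{i}x\cdots$ lies in the coset of $a^{i-1}(ax)$ with $i-1$ even, and formula (4.2) or (4.4) yields an extra factor $v^{2^{m-1}u}$. This factor is trivial iff $m\ge n+1$, and otherwise it must be compensated.
\item[(c)] $\xi^{-1}[\alpha,\xi]\xi=[\alpha,\xi]^{-1}$ and $[\alpha,\xi]^{\alpha}=[\alpha,\xi]^{r}$. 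If $j=1$, then $\alpha$ acts on $G'$ by $v\mapsto v^{-r}$. This is consistent with $v^{r}$ only when $r\equiv-r$ modulo the order of the relevant commutator, that is, only when $v^{2r}=1$. Hence $j=1$ is possible only if $\lambda\ge n-2$ (that is, $r\equiv-1$). I also require $[[\alpha,\xi],\alpha^{2^{m-1}}]=1$.
\end{itemize}
Tabulating these conditions should give $|\mathrm{Im}\,\pi|$ as follows. For $\lambda\ge n-2$ it is $2^{m+2}$ when $\eta=0$ and $2^{m+1}$ when $\eta=1$, which yields $2^{2n+m}$ and $2^{2n+m-1}$ respectively. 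For $\lambda\le n-3$ the unrestricted $i$ must be cut down, since the odd unit $i$ acts on $G'$ by $r^{i}$, and this has to be compatible with the commutator $[\alpha,\xi]=v^{(r^i-1)/(r-1)}$ transported by $r$ (this always holds). The expected answer is that the image has order $2^{m+\lambda+5-n}$ in the generic range, with the boundary case $n=m+\lambda+1$ (where $i_1$ and $\eta$ interact via (a)) giving $2^{3}$ or $2^{4}$.

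I expect the delicate step to be reconciling (a) and (b) with $\eta$ at the boundary. There I will first test small explicit cases, for example $m=2,n=3,\lambda=0$, directly, to calibrate the exponents before writing the general $2$-adic argument.
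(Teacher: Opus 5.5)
Your framework is the paper's: the map $\Aut(G)\to\Aut(G/\l v^2\r)$, the kernel count $2^{2n-2}$, and your condition (a) on $i_1$ are all correct. The gap is in the image, which is the real content of the theorem. There your listed conditions do not produce the numbers you announce, and one of them is wrong.

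\textbf{Step (c) is incorrect.} Take $\alpha=a^ixv^k$ with $i$ odd. It acts on $G'=\l v\r$ by $v\mapsto v^{-r^i}$, so the relation $[\alpha,\xi]^\alpha=[\alpha,\xi]^r$ needs $r^{i-1}\equiv-1\pmod{2^n}$. Since $i-1$ is even, $r^{i-1}\equiv1\pmod 8$, so this never holds for $n\ge2$. Even in your simplified form, $v^{2r}=1$ is impossible because $r$ is odd and $o(v)=2^n\ge4$; taking $r\equiv-1$ changes nothing. So $j=1$ is excluded in every case, and this is precisely the factor $2$ in the index.

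With your conditions as written, nothing removes $j=1$ when $\lambda\ge n-2$ and $\eta=0$. Your (b) even lets it through when $m\ge n+1$. The image would then be all of $\Aut(\ov G)$, giving $|\Aut(G)|=2^{2n+m+1}$ rather than the $2^{m+2}$-image you assert.

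\textbf{The restriction on $i$ for $\lambda\le n-3$ is never stated.} With $j=0$, $\alpha=a^iv^k$ acts by $v\mapsto v^{r^i}$, so you need $r^{i-1}\equiv1\pmod{2^n}$. Since $\nu_2(r^{i-1}-1)=\lambda+2+\nu_2(i-1)$ for even $i-1$, this is $\nu_2(i-1)\ge n-\lambda-2$. That leaves $2^{m-n+\lambda+2}$ admissible odd $i\in[1,2^m]$. Your parenthetical ``this always holds'' contradicts the cut-down you say is needed, and this cut-down drives every formula in part (2).

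\textbf{Your generic image order ignores $\eta$.} You give $2^{m+\lambda+5-n}$ throughout the range $\lambda+3\le n\le m+\lambda$. By your own (a), $\eta=1$ with $n\le m+\lambda$ forces $i_1=0$. The image is then $2^{m+\lambda+4-n}$, which gives $2^{n+m+\lambda+2}$.

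\textbf{Remaining checks.} For $j=0$ you must verify two things:
\begin{itemize}
\item[(i)] $\alpha^{2^m}=z^\eta$ imposes no condition on $k$. The $v$-exponent $k\prod_{s=0}^{m-1}(r^{2^si}+1)$ has valuation at least $m+\lambda+1\ge n$ by Lemma~\ref{parameter-relation}.
\item[(ii)] $k$ and $k_1$ are unrestricted.
\end{itemize}

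As it stands, the image orders are read off from the statement rather than derived. The conditions you actually list would give wrong values in case (1), and the generic value you quote is wrong for $\eta=1$ in case (2).
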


\proof Let \(G=G(\eta,t,\delta)=NZ\), where \(m,n>1\), \(Z=\langle a\rangle\),
and
\[
N=\langle v,x\mid v^{2^n}=1,\ x^2\in\langle z\rangle,\
z=v^{2^{n-1}},\ v^x=v^{-1}\rangle\trianglelefteq G.
\]
Moreover, \(Z\cap N\le\langle z\rangle=\Z(N)\) and
\(|Z/(Z\cap N)|=2^m\). The generators satisfy
\[
[a,x]=v,\qquad [v,a^{2^{m-1}}]=1,\qquad v^a=v^{-1+4t}.
\]
Put \(r=-1+4t\).
Since $G'=\l v\r$ and $N'=\l v^2\r$,  it follows that $\l v^2\r$ is a characteristic subgroup of $G$.
Let
\[
\overline{G}=G/\langle v^2\rangle
=\langle\overline{v},\overline{x},\overline{a}\rangle,
\]
where \(\overline{v},\overline{x},\overline{a}\) denote the images of
\(v,x,a\), respectively.
Because $Z\cap N\leq \l z\r=\l v^{2^{n-1}} \r$ and $|Z/(Z\cap N)|=2^m$, we have $a^{2^m}\in \l z\r=\l v^{2^{n-1}}\r$. Thus
\[\overline{G}=\l \overline{v},\overline{x},\overline{a}~|~\overline{a}^{2^m}=\overline{v}^2=\overline{x}^2=1,
[\overline{a},\overline{x}]=\overline{v},[\overline{a},\overline{v}]=[\overline{x},\overline{v}]=1\r\cong H.\]

Notice that each \(\sigma\in\Aut(G)\) induces an automorphism
\(\overline{\sigma}\) of \(\overline{G}\) by
\[
(g\langle v^2\rangle)^{\overline{\sigma}}
=g^\sigma\langle v^2\rangle,
~\text{where }g\langle v^2\rangle\in\overline{G}.
\]
Thus there is a homomorphism
\[
\rho:\Aut(G)\longrightarrow\Aut(\overline{G})
\]
defined by \(\rho(\sigma)=\overline{\sigma}\). Let \(\Ker\rho\) and
\(\Aut(G)^\rho\) denote the kernel and image of \(\rho\), respectively.
The first isomorphism theorem gives
$\Aut(G)/\Ker\rho\cong\Aut(G)^\rho\le\Aut(\overline{G}).$
Consequently,
\[
|\Aut(G)|=|\Ker\rho|\,|\Aut(G)^\rho|.
\]
It therefore remains to determine \(|\Ker\rho|\) and
\(|\Aut(G)^\rho|\).

\vskip0.1in
\textbf{(1) Determining \(|\Ker\rho|\).}
\vskip0.1in
Explicitly,
\[
\Ker\rho=\{\sigma\in\Aut(G):g^\sigma\langle v^2\rangle
=g\langle v^2\rangle\text{ for all }g\in G\}.
\]
Since \(G=\langle v,x,a\rangle\), every nontrivial
\(\sigma\in\Ker\rho\) has the form
\[
\sigma:\left\{
\begin{array}{ll}
v\mapsto v^{2k+1},\\
x\mapsto xv^{2\ell},\\
a\mapsto av^{2s},
\end{array}
\right.
\qquad 1\le k,\ell,s\le2^{n-1}.
\]
We now determine all possible values of \(k,\ell,s\) in order to compute
\(|\Ker\rho|\).

Since \(\sigma\in\Aut(G)\), it preserves the orders of the
generators \(v,x,a\) and leaves the defining relations invariant:
\[
(v^x)^\sigma=(v^{-1})^\sigma,\quad
[a,x]^\sigma=v^\sigma,\quad
(v^a)^\sigma=(v^{-1+4t})^\sigma,\quad
[v,a^{2^{m-1}}]^\sigma=1.
\]
We begin with the relations involving \(v\) and \(x\). Since
\(v^\sigma=v^{2k+1}\) and
\[
(x^\sigma)^2=(xv^{2\ell})^2
=x^2(v^{2\ell})^xv^{2\ell}=x^2,
\]
we have \(|v^\sigma|=|v|\), \(|x^\sigma|=|x|\), and
\[
(v^x)^\sigma=(v^{2k+1})^{xv^{2\ell}}
=v^{-(2k+1)}=(v^{-1})^\sigma.
\]
Similarly,
\[
(v^a)^\sigma=(v^{2k+1})^{av^{2s}}=v^{r(2k+1)}=(v^r)^\sigma.
\]
Since \([v,a^{2^{m-1}}]=1\), equation (4.3) also gives
\[
[v,a^{2^{m-1}}]^\sigma
=[v^{2k+1},(av^{2s})^{2^{m-1}}]
=[v^{2k+1},a^{2^{m-1}}
v^{2s(r+1)(r^2+1)\cdots(r^{2^{m-2}}+1)}]=1.
\]
It remains to impose \(|a^\sigma|=|a|\) and
\([a,x]^\sigma=v^\sigma\). By (4.3),
\[
(a^\sigma)^{2^m}=(av^{2s})^{2^m}
=\begin{cases}
a^{2^m},&\lambda\ge n-2,\\
a^{2^m}v^{2s(r+1)(r^2+1)\cdots(r^{2^{m-1}}+1)},&
  \lambda\le n-3.
\end{cases}
\]
In the first case, \(|a^\sigma|=|a|\) follows immediately. In the second,
\(\lambda=\nu_2(t)\); write \(t=2^\lambda t_0\) with \(t_0\) odd. Since
\(2\parallel(r^{2^d}+1)\) for every \(d\ge1\) and
\(2^{2+\lambda}\parallel(r+1)\), we obtain
\[
2^{m+\lambda+1}\parallel
(r+1)(r^2+1)\cdots(r^{2^{m-1}}+1).
\]
By Lemma~\ref{parameter-relation}, \(m+\lambda+1\ge n\), so the displayed
divisibility implies that the \(v\)-factor is \(1\). Consequently,
\((a^\sigma)^{2^m}=a^{2^m}\), and hence \(|a^\sigma|=|a|\) for every
\(1\le s\le2^{n-1}\).

Finally, the relation \([a,x]^\sigma=v^\sigma\) gives
\[
v^{2k+1}=v^\sigma=[a,x]^\sigma=[av^{2s},xv^{2\ell}]
=v^{1-4s+4\ell(1-2t)}.
\]
Consequently,
\[
k\equiv-2s+2\ell(1-2t)\pmod{2^{n-1}}.
\]
Since \(1\le k\le2^{n-1}\) is a complete residue system modulo
\(2^{n-1}\), each pair \((s,\ell)\) determines exactly one admissible
\(k\). Therefore, since \(s\) and \(\ell\) each have \(2^{n-1}\) choices,
\[
|\Ker\rho|=2^{2n-2}. \eqno(4.7)
\]

\vskip0.1in
\textbf{(2) Determining \(|\Aut(G)^\rho|\).}
\vskip0.1in
Recall that \(\Aut(G)^\rho\le\Aut(\overline{G})\) and
\(\overline{G}\cong H\). Hence, by Lemma~\ref{lem:aut-auxiliary},
\[
|\Aut(\overline{G})|=|\Aut(H)|=2^{m+3}.
\]
To determine \(|\Aut(G)^\rho|\), it is enough to compute the index
\(|\Aut(\overline{G}):\Aut(G)^\rho|\).

Let \(\overline{\tau}\in\Aut(\overline{G})\) be arbitrary. By the proof of
Lemma~\ref{lem:aut-auxiliary}, $\overline{\tau}$ has the form
\[
\overline{\tau}:\left\{
\begin{array}{ll}
\overline{a}\mapsto\overline{a}^i\overline{x}^j\overline{v}^k,
  & 1\le i\le2^m\text{ odd},\quad j,k\in\{0,1\},\\
\overline{x}\mapsto
  \overline{a}^{2^{m-1}i_1}\overline{x}\overline{v}^{k_1},
  & i_1,k_1\in\{0,1\},\\
\overline{v}\mapsto\overline{v}.
\end{array}\right.
\]
We determine which choices admit a lift \(\tau\in\Aut(G)\) satisfying
\(\rho(\tau)=\overline{\tau}\). Let \(\tau\) be such a lift. Since \(G\) is
generated by \(a\) and \(x\), the lift is determined by the images of \(a\)
and \(x\); its image of \(v=[a,x]\) is then determined as well. Hence
\(\tau\) has the form
\[\tau:\left\{
\begin{array}{ll}
a \mapsto a^ix^jv^{k+2\ell},
  & 1\le i\le2^m\text{ odd},\quad j,k\in\{0,1\},\\
x \mapsto a^{2^{m-1}i_1}xv^{k_1+2\ell_1},
  & i_1,k_1\in\{0,1\},\\
v \mapsto v^{1+2\ell_2},
  & 1\le\ell,\ell_1,\ell_2\le2^{n-1}.
\end{array}\right.\]
The parameters \(\ell,\ell_1,\ell_2\) encode the additional freedom inside
the same kernel coset and therefore do not affect whether \(\overline{\tau}\)
belongs to \(\Aut(G)^\rho\). Thus the index will be determined only by
\((i,j,k,i_1,k_1)\).
Since $\tau\in \Aut(G)$, it preserves the orders of $a,x,v$ and leaves the defining relations invariant:
\[
(v^x)^\tau=(v^{-1})^\tau,\quad
[a,x]^\tau=v^\tau,\quad
(v^a)^\tau=(v^{-1+4t})^\tau,\quad
[v,a^{2^{m-1}}]^\tau=1.
\]

The conditions $|v^\tau| = |v|$ and $(v^x)^\tau = (v^{-1})^\tau$ hold automatically. We now examine the remaining constraints, beginning with $|x^\tau| = |x|$.
We have \(x^\tau=a^{2^{m-1}i_1}xv^{k_1+2\ell_1}\), where
\(i_1\in\{0,1\}\).
Squaring gives
\[
(x^\tau)^2=(a^{2^{m-1}i_1}x)^2(v^{k_1+2\ell_1})^xv^{k_1+2\ell_1}
=x^2a^{2^{m-1}i_1}(a^x)^{2^{m-1}i_1}
=x^2a^{2^{m-1}i_1}(av)^{2^{m-1}i_1}.
\]
By Eq.~(4.3), we obtain
\[
(x^\tau)^2=x^2a^{2^mi_1}
v^{(r^{2^{m-1}i_1}-1)/(r-1)},~\mbox{where}~ i_1\in\{0,1\}.
\]
If \(\lambda\le n-3\) and \(i_1=1\), then
\(2^{m+\lambda+1}\parallel(r^{2^{m-1}}-1)\), and consequently
\[
\nu_2\!\left(\frac{r^{2^{m-1}}-1}{r-1}\right)=m+\lambda.
\]
Thus we may write
$\frac{r^{2^{m-1}}-1}{r-1}=2^{m+\lambda}u$ with $u$ odd.
Therefore \[(x^\tau)^2 =x^2a^{2^mi_1}v^{\frac{r^{2^{m-1}i_1}-1}{r-1}}=\begin{cases}
x^2a^{2^mi_1} & \text{if } \lambda\ge n-2,\\
x^2 & \text{if } \lambda\le n-3\text{ and }i_1=0,\\
x^2a^{2^m}v^{2^{m+\lambda}u}
  & \text{if } \lambda\le n-3\text{ and }i_1=1.
\end{cases}\]
If $\lambda\ge n-2$, then $|x^\tau|=|x|$ holds if and only if
\[
i_1=0\quad\text{or}\quad(i_1=1\text{ and }\eta=0).
\]
If $\lambda\le n-3$, then $|x^\tau|=|x|$ holds if and only if
\[
i_1=0\quad\text{or}\quad(i_1=1\text{ and }
\begin{cases}
		v^{2^{m+\lambda}}=1 & \text{if } \eta=0,\\
		v^{2^{n-1}+2^{m+\lambda}u}=1 & \text{if } \eta=1.
	\end{cases}).
\]
Equivalently,
\[
i_1=0\quad\text{or}\quad(i_1=1\text{ and }
\begin{cases}
n\le m+\lambda & \text{if } \eta=0,\\
n=m+\lambda+1 & \text{if } \eta=1.
	\end{cases}).
\]
Consequently, \(i_1=1\) is possible if and only if one of the following
conditions holds:
\begin{itemize}
\item \(\eta=0\) and \(\lambda\ge n-2\);
\item \(\eta=0\) and \(\lambda+3\le n\le m+\lambda\);
\item \(\eta=1\) and \(n=m+\lambda+1\).
\end{itemize}
Otherwise, only $i_1=0$ is allowed.

Now we verify the equation \((v^a)^\tau=(v^r)^\tau\).
Since $(v^r)^\tau=(v^\tau)^r=v^{(2\ell_2+1)r}$ and
\[
(v^a)^\tau=(v^\tau)^{a^\tau}
=(v^{2\ell_2+1})^{a^ix^jv^{k+2\ell}}
=(v^{(2\ell_2+1)r^i})^{x^j},
~\mbox{where}~ j\in\{0,1\}.
\]
Suppose first that \(j=1\). Then
\[
v^{(2\ell_2+1)r}=(v^a)^\tau
=(v^{(2\ell_2+1)r^i})^x
=v^{-(2\ell_2+1)r^i},
\]
which implies
\[
r(2\ell_2+1)(1+r^{i-1})\equiv0\pmod{2^n}.
\]
Since \(r(2\ell_2+1)\) is odd, it follows that
\(2^n\mid(1+r^{i-1})\). However, this is impossible because \(i\) is odd
and \(n>1\). Hence \(j=0\), and consequently
\[
a^\tau=a^iv^{k+2\ell}.
\]
With \(j=0\), the original equation becomes
\[
v^{(2\ell_2+1)r}=(v^r)^\tau=(v^a)^\tau=v^{(2\ell_2+1)r^i},
\]
which yields
\[
r(2\ell_2+1)(r^{i-1}-1)\equiv0\pmod{2^n}.
\]
Again, since \(r(2\ell_2+1)\) is odd, we have
\[
r^{i-1}-1\equiv0\pmod{2^n}.
\]
If \(\lambda\ge n-2\), that is,
\(r=-1+4t\equiv-1\pmod{2^n}\), then \(i\) can
be any odd integer in \([1,2^m]\); hence there are \(2^{m-1}\) choices for
\(i\). If $\lambda\le n-3$, then from
$\nu_2(r^{i-1}-1)=\lambda+2+\nu_2(i-1)$ we obtain
\[\nu_2(i-1)\geq n-\lambda-2.\]
Thus the number of choices for the odd integer $i$ is $2^{m-(n-\lambda-2)}$.
In conclusion,
\begin{itemize}
\item If \(\lambda\ge n-2\), there are \(2^{m-1}\) choices for \(i\);
\item If \(\lambda\le n-3\), there are \(2^{m-(n-\lambda-2)}\) choices for
\(i\).
\end{itemize}

Moreover, a direct calculation gives
\[
(a^\tau)^{2^{m-1}}=(a^iv^{k+2\ell})^{2^{m-1}}
=a^{2^{m-1}i}
v^{(k+2\ell)(r^i+1)(r^{2i}+1)\cdots(r^{2^{m-2}i}+1)}.
\]
Since \([v,a^{2^{m-1}}]=1\), it follows that
\[
[v,a^{2^{m-1}}]^\tau
=[v^\tau,(a^\tau)^{2^{m-1}}]
=[v^{1+2\ell_2},(a^\tau)^{2^{m-1}}]=1.
\]

We now verify that \(|a^\tau|=|a|\). Since
\(a^\tau=a^iv^{k+2\ell}\), we have
\[
(a^\tau)^{2^m}=(a^iv^{k+2\ell})^{2^m}
=a^{2^mi}
v^{(k+2\ell)(r^i+1)(r^{2i}+1)\cdots(r^{2^{m-1}i}+1)}.
\]
Recall that \(i\) is odd. If \(\lambda\ge n-2\), then
\((a^\tau)^{2^m}=a^{2^mi}=a^{2^m}\). If \(\lambda\le n-3\), then
\(\nu_2(r^i+1)=2+\lambda\) and \(\nu_2(r^{2^s i}+1)=1\) with \(s\ge1\). Hence
$v^{(r^i+1)(r^{2i}+1)\cdots(r^{2^{m-1}i}+1)}=v^{2^{m+\lambda+1}u}$ for some odd integer $u$.
By the relation between $n$ and $\lambda$ established in Lemma~\ref{parameter-relation}(namely $n \le m+\lambda+1$), we have
\[v^{(r^i+1)(r^{2i}+1)\cdots(r^{2^{m-1}i}+1)}= v^{2^{m+\lambda+1}u} = 1.\]
Therefore
\[(a^\tau)^{2^m} = a^{2^m i} = a^{2^m}.\]
In summary, the condition $|a^\tau| = |a|$ imposes no further restriction on $k$; in other words, $k$ remains arbitrary subject to the constraints already derived.

Finally, we examine the equation \(v^\tau=[a,x]^\tau\). Since
\[
v^{2\ell_2+1}=v^\tau=[a,x]^\tau
=[a^iv^{k+2\ell},a^{2^{m-1}i_1}xv^{k_1+2\ell_1}]
=v^{-2(k+2\ell)+\frac{r^i-1}{r-1}[1+2(1-2t)(k_1+2\ell_1)]},
\]
we obtain
\[
2\ell_2+1\equiv -2(k+2\ell)
+\frac{r^i-1}{r-1}[1+2(1-2t)(k_1+2\ell_1)]\pmod{2^n}.
\]
Put \(w=(r^i-1)/(r-1)\). Then
\[
(w-1)+2w(1-2t)(k_1+2\ell_1)-2(k+2\ell)-2\ell_2
\equiv0\pmod{2^n}.
\]
Since \(w-1=r+\cdots+r^{i-1}\equiv0\pmod4\) and \(n>1\), reduction modulo
\(2\) gives
\[
w(1-2t)(k_1+2\ell_1)-(k+2\ell)-\ell_2\equiv0\pmod2.
\]
Since \(w(1-2t)\) is odd, this is equivalent to
\[
k_1-k\equiv\ell_2\pmod2.
\]
As \(\ell_2\) can have either parity, all four pairs
\((k,k_1)\in\{0,1\}^2\) are possible.

This completes the verification of the conditions for a lift \(\tau\). We now
count the resulting automorphisms in \(\Aut(\overline{G})\), considering the
two cases \(\lambda\ge n-2\) and \(\lambda\le n-3\) separately.
\begin{itemize}
\item[(1)] If \(\lambda\ge n-2\), then \(i\) is an arbitrary odd integer in \([1,2^m]\), and
either \(i_1=0\), or \(i_1=1\) and \(\eta=0\). The lift always forces
\(j=0\), which accounts for a factor \(2\) in the index. If \(\eta=0\), then
\(i_1=1\) remains possible. If \(\eta=1\), both \(j=0\) and \(i_1=0\) are
forced, contributing an additional factor \(2\). Therefore,
\[
|\Aut(\overline{G}):\Aut(G)^\rho|=
\begin{cases}
2,&\eta=0,\\
2^2,&\eta=1.
\end{cases}
\]
Using \(|\Aut(\overline{G})|=2^{m+3}\) and
\(|\Ker\rho|=2^{2n-2}\), we obtain
\[
|\Aut(G)|=
\begin{cases}
2^{2n+m},&\eta=0,\\
2^{2n+m-1},&\eta=1.
\end{cases}
\]

\item[(2)] If \(\lambda\le n-3\), then by Lemma~\ref{parameter-relation},
\(\lambda+3\le n\le m+\lambda+1\), and \(i\) has
\(2^{m-n+\lambda+2}\) choices. The criterion for \(i_1=1\) above separates
\(n\le m+\lambda\) from \(n=m+\lambda+1\). Accordingly, we distinguish four
cases according to the value of \(\eta\) and whether \(n\) attains the upper
endpoint.
In all four cases below, \(j\) is forced to be \(0\), contributing a common
factor \(2\) to the index, whereas \(k,k_1\in\{0,1\}\) remain arbitrary and
therefore cause no loss. It remains only to track \(i_1\) and \(i\).
\begin{itemize}
\item[(i)] Suppose that \(\eta=0\) and
\(\lambda+3\le n\le m+\lambda\). Then \(i_1\in\{0,1\}\), so \(i_1\) causes
no loss. The parameter \(i\) originally has \(2^{m-1}\) possible values, but
now has only \(2^{m-n+\lambda+2}\). Thus \(i\) contributes the factor
\[
\frac{2^{m-1}}{2^{m-n+\lambda+2}}
=2^{n-\lambda-3}.
\]
to the index. Including the common loss from \(j\), we obtain
\[
|\Aut(\overline{G}):\Aut(G)^\rho|
=2^{n-\lambda-3}\cdot2
=2^{n-\lambda-2}.
\]
Since \(|\Aut(\overline{G})|=2^{m+3}\), we have
$|\Aut(G)^\rho|=2^{m-n+\lambda+5}.$
Using \(|\Ker\rho|=2^{2n-2}\), we get
\[
|\Aut(G)|=2^{m+n+\lambda+3}.
\]

\item[(ii)] Suppose that \(\eta=0\) and \(n=m+\lambda+1\). Then
the preceding criterion for \(i_1=1\) gives \(i_1=0\). The parameter \(i\)
originally has \(2^{m-1}\) choices but now has only
\(2^{m-n+\lambda+2}=2\), contributing a factor \(2^{m-2}\). Together with
the common loss from \(j\) and the loss from \(i_1\), we have
\[
|\Aut(\overline{G}):\Aut(G)^\rho|
=2^{m-2}\cdot2\cdot2=2^m.
\]
Thus \(|\Aut(G)^\rho|=2^3\), and hence
\[
|\Aut(G)|=2^{2n-2}\cdot2^3=2^{2n+1}.
\]

\item[(iii)] Suppose that \(\eta=1\) and \(n=m+\lambda+1\). Then \(i_1\)
still belongs to \(\{0,1\}\), so it causes no loss, while \(i\) again has
\(2\) choices and contributes a factor \(2^{m-2}\). Together with the common
loss from \(j\), we have
\[
|\Aut(\overline{G}):\Aut(G)^\rho|
=2^{m-2}\cdot2=2^{m-1},
\]
and therefore $
|\Aut(G)^\rho|=2^4.$
Consequently,
\[
|\Aut(G)|=2^{2n-2}\cdot2^4=2^{2n+2}.
\]

\item[(iv)] Suppose that \(\eta=1\) and
\(\lambda+3\le n\le m+\lambda\). Then \(i_1=0\), and \(i\) has
\(2^{m-n+\lambda+2}\) choices, contributing the factor
\(2^{n-\lambda-3}\). Together with the common loss from \(j\) and the loss
from \(i_1\), we obtain
\[
|\Aut(\overline{G}):\Aut(G)^\rho|
=2^{n-\lambda-3}\cdot2^2
=2^{n-\lambda-1},
\]
Since \(|\Aut(\overline{G})|=2^{m+3}\),
\[
|\Aut(G)^\rho|=2^{m-n+\lambda+4}.
\]
Using \(|\Ker\rho|=2^{2n-2}\), we obtain
\[
|\Aut(G)|=2^{m+n+\lambda+2}.
\]
\end{itemize}
\end{itemize}
This completes the proof.\qed

\vskip0.2in
\subsection{Classification and enumeration of regular dessins}~~~~~~~~~~~~~~~~~
\vskip0.1in
In this subsection we classify and enumerate the regular dessins with
underlying graph \(\K_{2^e,2^f}\) whose automorphism group belongs to the
family \(G(\eta,t,\delta)\). By Theorem~\ref{thm:group-classification}, the
isomorphism classes in this family are known. Lemma~\ref{lem:generating-pairs-second}
then determines which of these classes admit exact bicyclic generating pairs;
groups without such pairs do not contribute to the enumeration.

For every remaining group \(G\), let \(\Omega_G\) denote the set of exact
bicyclic generating pairs of \(G\) of type \((2^e,2^f)\).
Lemma~\ref{semi-regular} shows that the corresponding regular dessins
correspond bijectively to the
\(\Aut(G)\)-orbits on the set \(\Omega_G\), and the number of orbits is
\(|\Omega_G|/|\Aut(G)|\). By Lemma~\ref{lem:generating-pairs-second},
\(|\Omega_G|\) does not depend on the representative \(G\); denote this
common size by \(|\Omega|\). Hence, for a set \(\mathcal R\) of representatives,
the number of non-isomorphic regular dessins is
\[
\sum_{G\in\mathcal R}\frac{|\Omega_G|}{|\Aut(G)|}
=|\Omega|\sum_{G\in\mathcal R}\frac{1}{|\Aut(G)|}.
\]

\begin{lemma}\label{lem:count-second}
Let $n(\mathcal{K}_{2^e,2^f})$ denote the number of non-isomorphic regular dessins of $\K_{2^e,2^f}$ associated with the family $G(\eta,t,\delta)$. Then
\[n(\mathcal{K}_{2^e,2^f})=\begin{cases}
    2^{2f-3} & \text{if } e=f\ge3,\\
    2^{2f-2} & \text{if } e>f.
\end{cases}\]
\end{lemma}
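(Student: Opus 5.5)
The count is not a per-group quantity: by the formula displayed just before the lemma, it is $|\Omega|\sum_{G\in\mathcal R}1/|\Aut(G)|$. The plan is to evaluate this sum explicitly. Here $\mathcal R$ is the set of isomorphism-class representatives of Theorem~\ref{thm:group-classification} that admit pairs of type $(2^e,2^f)$. By Lemma~\ref{lem:generating-pairs-second} we always have $(e,f)=(n+1,m)$ with $n\ge m-1$. The case $e=f$ is exactly $n=m-1$, and $e>f$ is $n\ge m$. In each case I will list $\mathcal R$ by the parameters $(\eta,\lambda,\delta)$, with $\lambda\in\mathcal L_{m,n}$. For each representative I read off $|\Aut(G)|$ from Theorem~\ref{thm:aut-Geta} and $|\Omega_G|$ from Lemma~\ref{lem:generating-pairs-second}, then sum.

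\textbf{Case $e>f$, i.e.\ $n\ge m$.} Here $|\Omega|=2^{2n+2m-2}$ for $\eta=0$, and the same value for $\eta=1$, which occurs only when $n=m$.

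First take $\eta=0$ and $\delta=0$, with $\lambda$ running over $\mathcal L_{m,n}=\{n-m-1,\dots,n-2\}$ when $n\ge m+1$.
\begin{itemize}
\item The endpoint $\lambda=n-2$ contributes $2^{-(2n+m)}$, which can be written as $2^{-(n+m+3)}2^{-(n-3)}$.
\item The middle values $n-m\le\lambda\le n-3$ contribute $2^{-(n+m+\lambda+3)}$ each.
\item The value $\lambda=n-m-1$, where $n=m+\lambda+1$, contributes $2^{-(2n+1)}$.
\end{itemize}
A geometric series telescopes the first two items:
\[
2^{-(n+m+3)}\Bigl(\sum_{\lambda=n-m}^{n-3}2^{-\lambda}+2^{-(n-3)}\Bigr)=2^{-(2n+2)} .
\]
So $\delta=0$ gives $2^{-(2n+2)}+2^{-(2n+1)}$ in total.

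For $\delta=1$, Lemma~\ref{lem:iso-delta} excludes $\lambda=n-m-1$, since that group is isomorphic to the $\delta=0$ one. I will also check via Lemma~\ref{lem:generating-pairs-second} that the remaining $\delta=1$ groups carry the same $|\Omega|$. Then $\delta=1$ contributes $2^{-(2n+2)}$.

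Hence the $\eta=0$ total is $2^{-2n}$. Multiplying by $|\Omega|$ gives $2^{2m-2}=2^{2f-2}$.

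For $n=m$ the set $\mathcal L_{n,n}$ starts at $0=n-m$, so the $\lambda=n-m-1$ term disappears. The classes $G(1,2^\lambda,0)$ now appear, with $|\Aut|=2^{n+m+\lambda+2}$, or $2^{2n+m-1}$ at the endpoint. Running the same geometric sum, I expect their contribution to make up exactly the missing $2^{-(2n+1)}$, again giving total $2^{2f-2}$.

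\textbf{Case $e=f$, i.e.\ $n=m-1$.} Now only $\eta=0$ occurs and $|\Omega|=2^{4m-3}$. The set $\mathcal L_{m,m-1}=\{0,\dots,n-2\}$ has no $\lambda=n-m-1$ term. Both $\delta=0$ and $\delta=1$ are present for every $\lambda$, since $\lambda\ge n-m$ holds automatically.

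The same telescoping gives $2\cdot 2^{-(2n+2)}=2^{-(2n+1)}$. Multiplying by $2^{4m-3}$, with $2n=2m-2$, gives $2^{4m-3-2m+1}=2^{2m-2}$. This is twice the claimed $2^{2f-3}$, which signals that I must recheck two things:
\begin{itemize}
\item Lemma~\ref{lem:generating-pairs-second}(1), in particular whether the $\delta=1$ groups admit the second parity pattern;
\item the part of Theorem~\ref{thm:aut-Geta} relevant when $n=m+\lambda+1$ is impossible.
\end{itemize}

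\textbf{Main obstacle.} I expect the main difficulty to be exactly this bookkeeping: for each $\delta$ and each boundary value of $\lambda$, deciding which representatives actually admit exact pairs and with which $|\Omega_G|$. The sums themselves are routine geometric series.
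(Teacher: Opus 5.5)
\textbf{Gap in the case $e=f$.} Your strategy is the paper's: sum $|\Omega_G|/|\Aut(G)|$ over the representatives from Theorem~\ref{thm:group-classification} that admit exact pairs. Your treatment of $e>f$ is correct. The $n=m$ subcase, which you only sketch, does close up: the classes $G(1,2^{\lambda},0)$ give $2^{-(n+m+2)}\bigl(\sum_{\lambda=0}^{n-3}2^{-\lambda}+2^{-(n-3)}\bigr)=2^{-(2n+1)}$.

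The problem is the case $e=f$. There you obtain $2^{2m-2}$ and conclude that Lemma~\ref{lem:generating-pairs-second}(1) or Theorem~\ref{thm:aut-Geta} must be suspect. As written, the proposal therefore does not prove this case. The discrepancy is an arithmetic slip of your own. The telescoping identity is $\sum_{\lambda=L}^{n-3}2^{-\lambda}+2^{-(n-3)}=2^{1-L}$. Your value $2^{-(2n+2)}$ for $n\ge m$ relied on the lower limit $L=n-m$, which cancels the $m$ in the prefactor $2^{-(n+m+3)}$. When $n=m-1$, the range $\mathcal L_{m,m-1}$ begins at $L=0$, not at $n-m=-1$, as you yourself noted. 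So each value of $\delta$ contributes $2^{-(n+m+3)}\cdot 2=2^{-(n+m+2)}=2^{-(2n+3)}$, not $2^{-(2n+2)}$. Your extra factor of $2$ is exactly the contribution of the nonexistent term $\lambda=-1$.

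With this correction, $G(0,2^{\lambda},0)$ and $G(0,2^{\lambda},1)$ together give $2^{-(2n+2)}=2^{-2m}$. The count is then $2^{4m-3}\cdot 2^{-2m}=2^{2m-3}=2^{2f-3}$, exactly as claimed, with each class contributing $2^{2m-4}$ as in the paper. Nothing in Lemma~\ref{lem:generating-pairs-second} or Theorem~\ref{thm:aut-Geta} needs revisiting.
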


\proof
Let \(G=G(\eta,t,\delta)\), and write \(\lambda=\nu_2(t)\), with the same
convention \(\nu_2(0)=+\infty\) as before. Then
Lemma~\ref{lem:generating-pairs-second} gives
\[
(e,f)=(n+1,m),
\quad\text{where}\quad
m-1\le n\le m+\lambda+1.
\]
More precisely, \(G\) admits exact bicyclic generating pairs precisely in
the following cases:
\begin{itemize}
\item \(\eta=0\), \(n=m-1\), and
  \(|\Omega|=2^{4m-3}\);
\item \(\eta=0\), \(m\le n\le m+\lambda+1\), and
  \(|\Omega|=2^{2n+2m-2}\);
\item \(\eta=1\), \(n=m\), and
  \(|\Omega|=2^{2n+2m-2}\).
\end{itemize}

By Theorem~\ref{thm:group-classification}, the isomorphism classes are
\[ G(0,2^{\lambda_0},0),~G(1,2^{\lambda_0},0),~G(\eta,2^{\lambda_0},1), \]
where $\max\{ 0,n-m-1 \}\le \lambda_0\le n-2$ and either $\eta=0$ if $\lambda_0\ge n-m$, or $\eta=1$ if $\lambda_0=n-m-1$.
We now determine which of these classes admit exact bicyclic generating
pairs. The endpoint class \(G(1,2^{\lambda_0},1)\) cannot occur: if it
admitted an exact bicyclic generating pair,
Lemma~\ref{lem:generating-pairs-second} would give \(m=n\), whereas
Theorem~\ref{thm:group-classification} places this class at
\(\lambda_0=n-m-1=-1<0\), impossible. Hence, within
\(\lambda_0\in\mathcal L_{m,n}\), the contributing classes are
\[
G(0,2^{\lambda_0},0),\qquad
G(1,2^{\lambda_0},0)\ (n=m),\qquad
G(0,2^{\lambda_0},1)\ (\lambda_0\ge n-m).
\]
We therefore distinguish the cases \(n=m-1\), \(n=m\), and \(n>m\) below.

\vskip0.1in
{\bf Case 1: \(G=G(0,2^{\lambda_0},0)\).}
\vskip0.1in
By Lemma~\ref{lem:generating-pairs-second},
\[ |\Omega|=\begin{cases}
2^{4m-3} & \text{if } n=m-1,\\
2^{2n+2m-2} & \text{if } m\le n\le m+\lambda_0+1,
\end{cases} \]
and by Theorem~\ref{thm:aut-Geta},
\[ |\Aut(G(0,2^{\lambda_0},0))|=\begin{cases}
2^{2n+m} & \text{if } \lambda_0=n-2,\\
2^{n+m+\lambda_0+3} & \text{if } n-m\le\lambda_0\le n-3,\\
2^{2n+1} & \text{if } \lambda_0=n-m-1.
\end{cases} \]
We now compute the contribution of this class according to the possible
values of \(n\).
\begin{itemize}
\item If \(n=m-1\), then
\(\max\{0,n-m-1\}=0\) and hence
\(0\le\lambda_0\le n-2\). Then
\[
\sum_{\lambda_0=0}^{n-2}\frac{|\Omega|}{|\Aut(G)|}
=\sum_{\lambda_0=0}^{n-3}\frac{2^{4m-3}}{2^{n+m+\lambda_0+3}}
+\frac{2^{4m-3}}{2^{2n+m}}
=\sum_{\lambda_0=0}^{m-4}2^{2m-\lambda_0-5}+2^{m-1}
=2^{2m-4}.
\]
\item If \(m=n\), then \(\max\{0,n-m-1\}=0\), and hence
\(0\le\lambda_0\le n-2\). Then
\[
\sum_{\lambda_0=0}^{n-2}\frac{|\Omega|}{|\Aut(G)|}
=\sum_{\lambda_0=0}^{n-3}\frac{2^{4m-2}}{2^{2m+\lambda_0+3}}
+\frac{2^{4m-2}}{2^{3m}}
=\sum_{\lambda_0=0}^{m-3}2^{2m-\lambda_0-5}+2^{m-2}
=2^{2m-4}.
\]
\item If \(m<n\), then \(n-m-1\ge0\), and hence
\(n-m-1\le\lambda_0\le n-2\). Then
\[
\begin{aligned}
\sum_{\lambda_0=n-m-1}^{n-2}\frac{|\Omega|}{|\Aut(G)|}
&=\frac{2^{2n+2m-2}}{2^{2n+1}}
+\sum_{\lambda_0=n-m}^{n-3}
  \frac{2^{2n+2m-2}}{2^{n+m+\lambda_0+3}}
+\frac{2^{2n+2m-2}}{2^{2n+m}}\\
&=2^{2m-3}
+\sum_{\lambda_0=n-m}^{n-3}2^{n+m-\lambda_0-5}
+2^{m-2}=3\cdot2^{2m-4}.
\end{aligned}
\]
\end{itemize}

\vskip0.1in
{\bf Case 2: \(G=G(0,2^{\lambda_0},1)\).}
\vskip0.1in
By Theorem~\ref{thm:group-classification}, this representative occurs
precisely when \(\lambda_0\ge n-m\). Moreover, \(|\Omega|\) and
\(|\Aut(G)|\) have the same formulas as in Case~1. We again distinguish
three possibilities.
\begin{itemize}
\item If \(n=m-1\), then
\[
0=\max\{0,n-m\}\le\lambda_0\le n-2.
\]
This is the same range as in Case~1; moreover, \(|\Omega|\) and
\(|\Aut(G)|\) take the same values. Hence the contribution is also
\(2^{2m-4}\).
\item If \(n=m\), then
\[
0=\max\{0,n-m\}\le\lambda_0\le n-2,
\]
which is again the same range and the same parameter formulas as in
Case~1. Therefore, the contribution is \(2^{2m-4}\).
\item If \(m<n\), then
\[
n-m=\max\{0,n-m\}\le\lambda_0\le n-2.
\]
Therefore,
\[
\sum_{\lambda_0=n-m}^{n-2}\frac{|\Omega|}{|\Aut(G)|}
=\sum_{\lambda_0=n-m}^{n-3}
\frac{2^{2n+2m-2}}{2^{n+m+\lambda_0+3}}
+\frac{2^{2n+2m-2}}{2^{2n+m}}
=\sum_{\lambda_0=n-m}^{n-3}2^{n+m-\lambda_0-5}+2^{m-2}
=2^{2m-4}.
\]
\end{itemize}

\vskip0.1in
{\bf Case 3: \(G=G(1,2^{\lambda_0},0)\).}
\vskip0.1in
By Lemma~\ref{lem:generating-pairs-second}, this class occurs only when
\(m=n\). In this case
\[
|\Omega|=2^{4m-2},
\]
and Theorem~\ref{thm:group-classification} gives
\(0\le\lambda_0\le m-2\).
Theorem~\ref{thm:aut-Geta} gives
\[ |\Aut(G(1,2^{\lambda_0},0))|=\begin{cases}
2^{3m-1} & \text{if } \lambda_0=m-2,\\
2^{2m+\lambda_0+2} & \text{if } 0\le\lambda_0\le m-3.
\end{cases} \]
Therefore,
\[
\sum_{\lambda_0=0}^{m-2}\frac{|\Omega|}{|\Aut(G)|}
=\sum_{\lambda_0=0}^{m-3}\frac{2^{4m-2}}{2^{2m+\lambda_0+2}}
+\frac{2^{4m-2}}{2^{3m-1}}
=\sum_{\lambda_0=0}^{m-3}2^{2m-\lambda_0-4}+2^{m-1}
=2^{2m-3}.
\]

Adding the contributions from the three isomorphism classes gives
\[
\sum_{G\in\mathcal R}\frac{|\Omega_G|}{|\Aut(G)|}
=
\begin{cases}
2\cdot2^{2m-4}=2^{2m-3},&n=m-1,\\
2^{2m-4}+2\cdot2^{2m-3}+2^{2m-4}=2^{2m-2},&n=m,\\
3\cdot2^{2m-4}+2^{2m-4}=2^{2m-2},&n>m.
\end{cases}
\]
Thus the last two cases have the same total, and hence they combine into
\(2^{2m-2}\) for \(n\ge m\).
Since \((e,f)=(n+1,m)\), the case \(n=m-1\) is precisely \(e=f\),
while \(n\ge m\) is precisely \(e>f\). Therefore,
\[ n(\mathcal{K}_{2^e,2^f})=\begin{cases}
2^{2f-3} & \text{if } e=f,\\
2^{2f-2} & \text{if } e>f.
\end{cases} \]
This completes the proof.\qed

{\noindent\bf Proof of Theorem~\ref{main-1}:}
The first part follows from Lemma~\ref{lem:G0-generating-pairs}, which shows that the first family can occur
only when $n=2$; the resulting group is $G_0$. The second part follows from
Theorem~\ref{thm:group-classification}, which determines the isomorphism classes of the groups
$G(\eta,t,\delta)$, and from Lemma~\ref{lem:generating-pairs-second}, which determines which of these
groups admit exact bicyclic factorizations and gives $(e,f)=(n+1,m)$.
\vskip0.2in

{\noindent\bf Proof of Theorem~\ref{main-2}:}
For $G=G_0$, the assertion follows from Lemma~\ref{lem:count-G0}.
For $G=G(\eta,t,\delta)$, the possibility $e=f=2$ does not occur because it would require
$(e,f)=(n+1,m)$ with $m,n>1$. If $e=f\geq3$ or $e>f$, the assertion follows from
Lemma~\ref{lem:count-second}.

\vskip0.2in

\noindent{\bf Data availability}
\vskip0.1in
No data was used for the research described in the article.

\vskip0.2in

\noindent{\bf Declaration of competing interest}
\vskip0.1in
We have no conflict of interest.

\end{document}